\def\newaliasedtheorem#1[#2]#3{
  \newaliascnt{#1@alt}{#2}
  \newtheorem{#1}[#1@alt]{#3}
  \expandafter\newcommand\csname #1@altname\endcsname{#3}
}
\numberwithin{equation}{section}
\newtheoremstyle{slanted}{\topsep}{\topsep}{\slshape}{}{\bfseries}{.}{.5em}{}
\theoremstyle{plain}
\newtheorem{theorem}{Theorem}[section]
\theoremstyle{definition}
\theoremstyle{remark}
\newcommand{\setN}{\mathbb{N}}
\newcommand{\setR}{\mathbb{R}}
\newcommand{\R}{\mathbb{R}}
\newcommand{\eps}{\varepsilon}
\let\altphi\phi
\let\phi\varphi
\let\varphi\altphi
\let\altphi\undefined
\newcommand{\abs}[1]{\left\lvert#1\right\rvert}
\newcommand{\norm}[1]{\left\lVert#1\right\rVert}
\newcommand{\weakto}{\rightharpoonup}
\newcommand{\di}{\mathop{}\!\mathrm{d}}
\newcommand{\didi}[1]{\frac{\mathrm{d}}{\di#1}}
\newcommand{\loc}{{\rm loc}}
\newcommand{\res}{\mathop{\hbox{\vrule height 7pt width .5pt depth 0pt
\vrule height .5pt width 6pt depth 0pt}}\nolimits}
\DeclareMathOperator{\supp}{supp}
\newcommand{\Ch}{{\sf Ch}}
\newcommand{\ee}{{\rm e}}
\newcommand{\Ric}{{\rm Ric}}
\newcommand{\Opt}{\mathrm{OptGeo}}
\DeclareMathOperator{\LIP}{LIP}
\DeclareMathOperator{\Geo}{Geo}
\DeclareMathOperator{\Lip}{Lip}
\DeclareMathOperator{\LIPc}{LIP_c}
\DeclareMathOperator{\Per}{Per}
\DeclareMathOperator{\LIPloc}{LIP_{loc}}
\DeclareMathOperator{\BV}{BV}
\newcommand{\leb}{\mathscr{L}}
\newcommand{\Prob}{\mathscr{P}}
\newcommand{\Borel}{\mathscr{B}}
\newcommand{\dist}{\mathsf{d}}
\DeclareMathOperator{\Dist}{dist}
\DeclareMathOperator{\sfd}{\dist}
\newcommand{\meas}{\mathfrak{m}}
\DeclareMathOperator{\mm}{\meas}
\newcommand{\ess}{\rm ess}
\newcommand{\AC}{{\rm AC}}
\DeclareMathOperator{\CD}{CD}
\DeclareMathOperator{\RCD}{RCD}
\newfont{\tmpf}{cmsy10 scaled 2500}
\begin{document}

\title{Polya-Szego Inequality and Dirichlet $p$-Spectral gap for non-smooth spaces with Ricci curvature bounded below}
\author{Andrea Mondino\thanks{Oxford University, Mathematical Institute, Oxford, United Kingdom. email: Andrea.Mondino@maths.ox.ac.uk (corresponding author)}  \, and \, Daniele Semola \thanks{Scuola Normale Superiore, Pisa, Italy. email: daniele.semola@sns.it}}
\maketitle

\begin{abstract}

We study decreasing rearrangements of functions defined on (possibly non-smooth) metric measure spaces with Ricci curvature bounded below by $K>0$ and dimension bounded above by $N\in (1,\infty)$ in a synthetic sense, the so called $\CD(K,N)$ spaces. We first establish a Polya-Szego type inequality stating that the $W^{1,p}$-Sobolev norm decreases  under such a rearrangement and apply the result to show sharp spectral gap for the $p$-Laplace operator with Dirichlet boundary conditions (on open subsets), for every $p\in (1,\infty)$. This extends to the non-smooth setting a classical result of B\'erard-Meyer \cite{BerardMeyer} and Matei \cite{Matei00}; remarkable examples of spaces fitting our framework and for which the results seem new include: measured-Gromov Hausdorff limits of Riemannian manifolds with Ricci $\geq K>0$, finite dimensional Alexandrov spaces with curvature$\geq K>0$, Finsler manifolds with Ricci $\geq K>0$.
\\In the second part of the paper we prove new rigidity and almost rigidity results attached to the aforementioned inequalities, in the framework of $\RCD(K,N)$ spaces, which are interesting even for smooth Riemannian manifolds with Ricci $\geq K>0$.

\bigskip

\centerline{\bf R\'esum\'e}

Nous \'etudions les r\'earrangements d\'ecroissants des fonctions d\'efinies sur les espaces m\'etrique mesur\'es (\'eventuellement non lisses) avec une courbure de Ricci $\geq  K> 0 $ et une dimension $\leq  N \in (1, \infty)$ dans un sens synth\'etique, appel\'ee espaces $ \CD(K, N) $. Nous \'etablissons d'abord une in\'egalit\'e de type Polya-Szego affirmant que la norme   de Sobolev $W^{1, p}$  diminue sous un tel r\'earrangement et appliquons le r\'esultat pour montrer un \'ecart spectral  pour l'op\'erateur $p$-laplacien avec des conditions au bord de Dirichlet (sur les sous-ensembles ouverts), pour chaque $ p \in (1, \infty)$. Cela \'etend au cadre non-lisse un r\'esultat classique de B\'erard-Meyer \cite {BerardMeyer} et de Matei \cite {Matei00}; des exemples remarquables d'espaces 	satisfaisant ce cadre et pour lesquels les r\'esultats semblent nouveaux sont notamment les suivants: mGH-limites des vari\'et\'es Riemanniennes avec Ricci $\geq K> 0 $, espaces d'Alexandrov \'a dimension finie avec \mbox{courbure $\geq K> 0 $,} vari\'et\'es de Finsler avec Ricci $\geq K> 0 $.
\\ Dans la deuxi\`eme partie du article, nous montrons de nouveaux r\'esultats de rigidit\'e et presque de rigidit\'e li\'es aux in\'egalit\'es susmentionn\'ees, dans le cadre des espaces $ \RCD (K, N) $, qui sont int\'eressants m\^eme pour les vari\'et\'es Riemanniennes lisses avec Ricci $\geq K> 0 $.

\bigskip
\noindent
\textbf{Keywords:} metric measure spaces with Ricci curvature bounded below, Polya-Szego Inequality, Spectral gap, $p$-Laplace operator.
\bigskip

\noindent
\textbf{MSC codes}: 58J50, 31E05, 35P15, 53C23.

\end{abstract}

\tableofcontents

\section{Introduction}
In 1884 Lord Rayleigh, in his book about the theory of sound \cite{Ray}, conjectured that, among all membranes of a given area, the disk has the lowest fundamental frequency of vibration.
This was proven  in 1920ies by Faber \cite{Fa23} and Krahn \cite{Kr25} for domains in the Euclidean plane and extended  by Krahn \cite{Kr26}  to higher dimensions. The celebrated Rayleigh-Faber-Krahn inequality reads as follows.

\begin{theorem}[Rayleigh-Faber-Krahn inequality \cite{Fa23,Kr25,Kr26}]
Let $\Omega\subset \R^{n}$ be a  relatively compact open domain with smooth boundary.
Then the  first Dirichlet eigenvalue of  $\Omega$ is bounded below by the first  Dirichlet eigenvalue of a Euclidean ball having the same volume of $\Omega$, moreover the inequality is rigid in the sense that equality is attained if and only if $\Omega$ is a ball.  
 \end{theorem}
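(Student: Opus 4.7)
The plan is to follow the classical symmetrization strategy: rewrite the first Dirichlet eigenvalue via its Rayleigh quotient, replace a minimizer by its Schwarz symmetrization on the Euclidean ball of the same volume, and apply the Polya-Szego inequality to conclude.

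First I would recall the variational characterization
$$\lambda_1(\Omega)=\inf\left\{\frac{\int_\Omega |\nabla u|^2\,dx}{\int_\Omega u^2\,dx}\;:\;u\in W^{1,2}_0(\Omega)\setminus\{0\}\right\},$$
and pick a minimizer $u$, which by standard elliptic regularity is smooth in $\Omega$; replacing $u$ by $|u|$ I may assume $u\ge 0$, and then $u>0$ in $\Omega$ by the strong maximum principle. Next, let $\Omega^\ast$ denote the open Euclidean ball centered at the origin with $|\Omega^\ast|=|\Omega|$, and let $u^\ast$ be the Schwarz symmetrization of $u$, i.e.\ the unique radially symmetric, radially nonincreasing function on $\Omega^\ast$ such that $|\{u^\ast>t\}|=|\{u>t\}|$ for all $t\ge 0$. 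Equimeasurability yields $\int_{\Omega^\ast}(u^\ast)^2\,dx=\int_\Omega u^2\,dx$, and one checks that $u^\ast\in W^{1,2}_0(\Omega^\ast)$.

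The heart of the proof is the Polya-Szego inequality
$$\int_{\Omega^\ast}|\nabla u^\ast|^2\,dx\;\le\;\int_{\Omega}|\nabla u|^2\,dx,$$
which I would establish via the coarea formula and the Euclidean isoperimetric inequality: writing $\mu(t):=|\{u>t\}|$ and using coarea,
$$\int_\Omega|\nabla u|^2\,dx=\int_0^{\|u\|_\infty}\!\!\left(\int_{\{u=t\}}|\nabla u|\,d\mathcal{H}^{n-1}\right)dt,$$
one bounds the inner integral below by $\mathcal{H}^{n-1}(\{u=t\})^2/(-\mu'(t))$ via Cauchy--Schwarz, and then applies the isoperimetric inequality $\mathcal{H}^{n-1}(\{u=t\})\ge n\omega_n^{1/n}\mu(t)^{(n-1)/n}$ to recognize the right-hand side as $\int_{\Omega^\ast}|\nabla u^\ast|^2\,dx$. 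Plugging $u^\ast$ into the Rayleigh quotient on $\Omega^\ast$ then gives $\lambda_1(\Omega^\ast)\le\lambda_1(\Omega)$.

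For the rigidity statement, assume $\lambda_1(\Omega)=\lambda_1(\Omega^\ast)$. Then equality must hold in Polya-Szego, hence in the isoperimetric inequality for $\mathcal{L}^1$-a.e.\ superlevel set $\{u>t\}$. Since equality in the Euclidean isoperimetric inequality characterizes balls, each such superlevel set is (up to a negligible set) a ball; because $u>0$ on $\Omega$ and the superlevel sets exhaust $\Omega$ as $t\to 0^+$, it follows that $\Omega$ itself coincides (up to measure zero, hence by openness and smoothness of $\partial\Omega$ exactly) with a ball. I expect the main technical obstacle to be precisely this last rigidity step: one must upgrade level-wise equality in the isoperimetric inequality to a global statement about $\Omega$, which requires the positivity of the eigenfunction in the interior together with a careful treatment of the nested family of Euclidean balls $\{u>t\}$ so as to conclude they are concentric and that their union is $\Omega$.
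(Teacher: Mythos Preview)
Your proposal is correct and follows exactly the classical approach that the paper sketches in its introduction: the variational characterization of $\lambda_1$, Schwarz symmetrization, and the Polya--Szego inequality proved via the coarea formula combined with the Euclidean isoperimetric inequality. Note that the paper does not give its own detailed proof of this classical result---it is cited as background---but your argument matches the outline the authors describe, and the rigidity step you flag (upgrading level-wise isoperimetric equality to concentric balls exhausting $\Omega$) is indeed the only delicate point.
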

The proof of the Rayleigh-Faber-Krahn inequality is based on two key facts: a variational characterisation for the first Dirichlet eigenvalue and the properties of symmetric decreasing rearrangements of functions. The variational characterisation of the first eigenvalue is given by 
\begin{equation}\label{eq:firstEV}
\lambda(\Omega):=\inf_{u\in C^{1}_{c}(\Omega)} \frac{\int_{\Omega} |\nabla u|^{2} dx}{\int_{\Omega} u^{2} dx}.
\end{equation}
Let us now briefly recall few basics about decreasing rearrangements. Given an open subset $\Omega\subset \R^{n}$, the symmetrized domain $\Omega^{*}\subset \R^{n}$ is a ball  with the same measure as $\Omega$ centred at the origin. If $u$ is a real-valued Borel function defined on  $\Omega$, its spherical decreasing rearrangement $u^{*}$ is a function defined on the ball $\Omega^{*}$ with the following properties: $u^{*}$ depends only on the distance from the origin, is decreasing along the radial direction  and is equi-measurable with $u$ (i.e. the super-level sets have the same volume: $|\{u>t\}|=|\{u^{*}>t\}|$, for every $t\in \R$).  Since the function  and its spherical decreasing rearrangement are equi-measurable, their $L^{2}$-norms are the same. The key property that Faber and Krahn  proved is that the $L^{2}$-norm of the gradient of a function decreases under rearrangements.  This last property  was formalised, extended to every $L^{p}$, $1<p<\infty$, and applied to several problems in mathematical physics by Polya and Szego in their book \cite{PS51}; probably this is why it is now well known as the Polya-Szego inequality.  The Polya-Szego inequality, combined with the variational characterization \eqref{eq:firstEV}, immediately gives the Rayleigh-Faber-Krahn inequality.
\\

Such a stream of ideas was  extended  in 1992 by B\'erard-Meyer \cite{BerardMeyer} to Riemannian manifolds $(M^{n},g)$ with $\Ric_{g}\geq K g$,  $K>0$. 
They proved the following result:

\begin{theorem}[B\'erard-Meyer \cite{BerardMeyer}]\label{thm:BeMe}
Let $(M^{n},g)$ be a Riemannian manifold  with $\Ric_{g}\geq K g$,  $K>0$, and let $\Omega\subset M$ be an open subset with smooth boundary. Let ${\mathbb S}^{n}_{K}$ be the round $n$-dimensional sphere  of radius $\sqrt{(n-1)/K}$ and let $\Omega^{*}\subset {\mathbb S}^{n}_{K}$ be a metric ball having the same renormalized volume of $\Omega$, i.e $\frac{|\Omega|}{|M|}=\frac{|\Omega^{*}|}{|{\mathbb S}^{n}_{K}|}$. Then $\lambda(\Omega)\geq \lambda(\Omega^{*})$ and equality is achieved if and only if $M$ is isometric to  ${\mathbb S}^{n}_{K}$ and $\Omega$ is a metric ball in ${\mathbb S}^{n}_{K}$.
 \end{theorem}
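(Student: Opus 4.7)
The plan is to reduce the eigenvalue comparison to a Polya-Szego type inequality, using spherical decreasing rearrangements modelled on the round sphere $\mathbb{S}^n_K$, and to invoke the L\'evy-Gromov isoperimetric inequality as the central geometric input.

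First I would use the variational characterisation of the first Dirichlet eigenvalue,
\[
\lambda(\Omega)=\inf_{u\in C^1_c(\Omega),\, u\not\equiv 0}\frac{\int_\Omega |\nabla u|^2 \di \mathrm{vol}_g}{\int_\Omega u^2 \di \mathrm{vol}_g}.
\]
It then suffices to associate to any non-negative $u\in C^1_c(\Omega)$ a function $u^*$ on $\Omega^*\subset \mathbb{S}^n_K$ which is radially symmetric, radially decreasing, and equimeasurable with $u$ with respect to the \emph{renormalised} volumes, i.e.\
\[
\frac{|\{u>t\}|_M}{|M|}=\frac{|\{u^*>t\}|_{\mathbb{S}^n_K}}{|\mathbb{S}^n_K|}\qquad\text{for every }t\geq 0,
\]
so that in particular $\frac{1}{|M|}\int_\Omega u^2 \di \mathrm{vol}_g = \frac{1}{|\mathbb{S}^n_K|}\int_{\Omega^*}(u^*)^2\di \mathrm{vol}_{\mathbb{S}^n_K}$. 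The heart of the matter is the Polya-Szego inequality
\[
\frac{1}{|\mathbb{S}^n_K|}\int_{\Omega^*}|\nabla u^*|^2 \di \mathrm{vol}_{\mathbb{S}^n_K} \leq \frac{1}{|M|}\int_\Omega|\nabla u|^2 \di \mathrm{vol}_g,
\]
after which the Rayleigh quotient of $u^*$ on $\Omega^*$ dominates $\lambda(\Omega^*)$ from the variational principle on the sphere, and the desired $\lambda(\Omega)\geq \lambda(\Omega^*)$ follows.

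To establish the Polya-Szego inequality I would combine three ingredients. First, the coarea formula, applied on both $M$ and $\mathbb{S}^n_K$, reduces the comparison of the Dirichlet energies to a pointwise-in-$t$ comparison of weighted integrals on the super-level sets $\{u>t\}$ and $\{u^*>t\}$. Second, H\"older's inequality bounds the $t$-slice $\int_{\{u=t\}}|\nabla u|$ from below by $\mathrm{Per}(\{u>t\})^2$ divided by $-\frac{d}{dt}|\{u>t\}|$, and analogously on the sphere with equality (since $u^*$ has spherical level sets and constant gradient there). Third, and most crucially, under $\Ric_g\geq K g$ the L\'evy-Gromov isoperimetric inequality guarantees that the renormalised perimeter of $\{u>t\}$ on $M$ is at least that of the metric ball of the same renormalised volume on $\mathbb{S}^n_K$, which is exactly the renormalised perimeter of $\{u^*>t\}$. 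Differentiating the equimeasurability identity for the distribution functions and combining these three ingredients yields the required integral inequality.

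For the rigidity statement, saturating the chain of inequalities forces equality in the L\'evy-Gromov isoperimetric inequality for almost every level of $u$. The rigidity part of L\'evy-Gromov (via Cheng's maximal diameter theorem) then compels $M$ to be isometric to $\mathbb{S}^n_K$, and forces the super-level sets of $u$ to be metric balls, whence $\Omega$ itself must be a metric ball. The main obstacle, even in the smooth setting, is the sharp L\'evy-Gromov isoperimetric comparison with its rigidity; in the non-smooth $\CD(K,N)$ framework that is the focus of the present paper, the serious additional difficulty will be to develop substitutes for the coarea formula, for perimeters of level sets, and for L\'evy-Gromov type estimates in the absence of smooth geometric objects.
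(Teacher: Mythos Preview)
Your proposal is correct and matches the approach the paper describes: the paper does not give its own proof of this classical result but summarises B\'erard--Meyer's strategy as ``define a spherical decreasing rearrangement $u^*$ on $\mathbb{S}$'' and ``replace the Euclidean isoperimetric inequality by the L\'evy--Gromov isoperimetric inequality in the proof of the corresponding Polya--Szego type inequality,'' which is exactly your outline via the variational characterisation, coarea formula, H\"older, and L\'evy--Gromov (with rigidity from the equality case). This is also precisely the template the paper follows in Sections~\ref{sec:polyaszego}--\ref{sec:spectralgap} for its non-smooth generalisations.
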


 The two key ideas in \cite{BerardMeyer} are the following. First, in the same spirit as above, for a function $u\in C^{1}_{c}(M)$ define a spherical decreasing rearrangement $u^{*}$ on ${\mathbb S}$; second, replace the Euclidean isoperimetric inequality by the L\'evy-Gromov isoperimetric inequality \cite[Appendix C] {Gro} in the proof of the corresponding Polya-Szego type inequality.  Let us finally mention that, arguing along the same lines, the comparison \autoref{thm:BeMe} was generalized to the first Dirichlet eigenvalue of the $p$-Laplacian for any $p\in (1,\infty)$ by Matei \cite{Matei00}.
\\

The goal of the present paper is two-fold: first, we generalise the Polya-Szego and the B\'erard-Meyer inequalities to non-smooth spaces with Ricci curvature  bounded below in a synthetic sense; second, we obtain a rigidity result for Polya-Szego inequality and an almost rigidity result for the Dirichlet $p$-spectral gap which  sound interesting even for smooth Riemannian manifolds.

\subsection{Polya-Szego and $p$-spectral gap in $\CD(K,N)$ spaces}

In order to discuss the main results of the paper let us introduce some preliminaries about non-smooth spaces with Ricci curvature  bounded below in a synthetic sense. 
\\A metric measure space (m.m.s. for short) is a triple $(X,\dist,\meas)$ where $(X,\dist)$ is a compact metric space endowed with a Borel probability measure $\mm$ with $\supp(\mm)=X$,  playing the role of reference volume measure. 
Using optimal-transport techniques,  Lott-Villani \cite{LottVillani} and Sturm \cite{Sturm06I,Sturm06II} introduced the so called curvature-dimension condition $\CD(K,N)$: the rough geometric picture is that a m.m.s. satisfying  $\CD(K,N)$ should be thought of as a possibly non-smooth metric measure space with Ricci curvature bounded below by $K\in \R$ and dimension bounded above by $N\in (1,\infty)$ in a synthetic sense.  The basic idea of this synthetic point of view is to consider weighted convexity properties of suitable entropy functionals along geodesics in the space of probability measures endowed with the quadratic transportation distance.
\\A first technical assumption throughout the paper  is the so called \emph{essentially non-branching} property \cite{RS2014}, 
which roughly amounts to require that the $L^{2}$-optimal transport between two absolutely continuous (with respect to the reference measure $\mm$) probability measures moves along a family of geodesics with no intersections, i.e.
a non-branching set of geodesics (for the precise definitions see Section \ref{SS:CDDef}).
\\The class of  essentially non-branching $\CD(K,N)$ spaces  is very natural for extending the Polya-Szego/B\'erard-Meyer  results. Indeed a key ingredient for both  is the isoperimetric inequality (via a coarea formula argument) and  it was proved by Cavalletti with the first author \cite{CavallettiMondino17} that the L\'evy-Gromov isoperimetric inequality extends to  essentially non-branching $\CD(K,N)$ spaces (see Section \ref{SS:FPSLG} for the details). 
\\Examples of essentially non-branching $\CD(K,N)$ spaces are Riemannian manifolds with Ricci curvature bounded below, finite dimensional Alexandrov spaces with curvature bounded below, Ricci limits and more generally $\RCD(K,N)$-spaces, Finsler manifolds endowed with a strongly convex norm and with Ricci bounded below; let us stress that our results are new in all these celebrated classes of spaces (apart from smooth manifolds). A standard example of a space failing to satisfy the essential non-branching property is $\R^{2}$ endowed with the $L^{\infty}$ norm.
\\

In order to state the main theorems, let us introduce some notation about the model one-dimensional space and the corresponding monotone rearrangement.
\\For any $K>0$ and $1<N<+\infty$ we define the one dimensional model space $(I_{K,N},\dist_{eu}, \meas_{K,N})$ for the curvature dimension condition of parameters $K$ and $N$ by
\begin{equation}\label{eq:defonedimIntro}
I_{K,N}:=\left(0,\sqrt{\frac{N-1}{K}}\pi\right),\quad \meas_{K,N}:=\frac{1}{c_{K,N}}\sin\left({t\sqrt{\frac{K}{N-1}}}\right)^{N-1}\leb^1\res I_{K,N},
\end{equation}
where $\dist_{eu}$ is the restriction to $I_{K,N}$ of the canonical Euclidean distance over the real line, $\leb^1$ is the standard Lebesgue measure,  and $c_{K,N}:=\int_{I_{K,N}} \sin\big(t\sqrt{K/(N-1)}\big)^{N-1} \di \leb^1(t)$ is the normalizing constant.

We now introduce the corresponding monotone rearrangement.  To this aim, given an open domain $\Omega\subset X$ and a non-negative Borel function $u:\Omega\to[0,+\infty)$ we define its distribution function $\mu:[0,+\infty)\to[0,\meas(\Omega)]$ by 
\begin{equation}\label{eq:defdistrIntro}
\mu(t):=\meas(\{u> t\}).
\end{equation}
It is not difficult to check that the distribution function $\mu$ is non increasing and left-continuous.
\\We will let $u^{\#}$ be the generalized inverse of $\mu$, defined in the following way:
\begin{equation*}
u^{\#}(s):=
\begin{cases*}
{\ess}\sup u & \text{if $s=0$},\\
\inf\left\lbrace t:\mu(t)<s \right\rbrace &\text{if $s>0$}. 
\end{cases*}
\end{equation*}

\begin{definition}[Rearrangement on one dimensional model spaces] 
Let $(X,\dist,\meas)$ be a $\CD(K,N)$ space, for some  $K>0$, $1<N<+\infty$,  and let  $\Omega\subset X$ be an open subset. Let  $\left(I_{K,N},\dist_{eu},\meas_{K,N}\right)$ be the one-dimensional model space defined in \eqref{eq:defonedimIntro} and consider  $[0,r]\subset I_{K,N}$ such that $\mm_{K,N}([0,r])=\mm(\Omega)$. For any  Borel function $u:\Omega\to [0,+\infty)$, the \emph{monotone rearrangement} $u^*:[0,r]\to \R^{+}$  is defined by
\begin{equation}\label{defu*}
u^*(x):=u^{\#}(\meas_{K,N}([0,x])), \quad \forall x\in [0,r].
\end{equation}
\end{definition}
\noindent
For an arbitrary Borel function $u:\Omega\to(-\infty+\infty)$, let $u^*$ be the monotone rearrangement of $\abs{u}$.

Finally, we denote by $W^{1,p}_{0}(\Omega)$ the closure (with respect to the $W^{1,p}$-topology) of the set of Lipschitz functions compactly supported in $\Omega$ (see Section \ref{sec:preliminaries} for more details).
\\We can now state the first main result of the paper.

\begin{theorem}[Polya-Szego inequality]\label{cor:symmapssobolevintosobolev}
Let $(X,\dist,\meas)$ be an essentially non branching $\CD(K,N)$ space for some $K>0$, $N\in (1,+\infty)$.  Let $\Omega\subset X$ be an open subset and consider  $[0,r]\subset I_{K,N}$ such that $\mm_{K,N}([0,r])=\mm(\Omega)$. 
\\Then the monotone rearrangement maps $W^{1,p}_0(\Omega)$ into $W^{1,p}\left(([0,r],\dist_{eu},\meas_{K,N})\right)$ for any $1<p<+\infty$. Moreover for any $u\in W^{1,p}_0(\Omega)$ it holds $u^{*}(r)=0$ and
\begin{equation}\label{PSIntro}
\int_{\Omega}\abs{\nabla u}_w^p\di \meas\ge\int_{0}^{r}\abs{\nabla u^*}^p\di \meas_{K,N}.
\end{equation}
\end{theorem}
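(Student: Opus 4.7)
The plan is to adapt the classical Berard-Meyer/Matei argument to the synthetic setting by coupling the metric coarea formula with the Levy-Gromov isoperimetric inequality proved for essentially non branching $\CD(K,N)$ spaces in \cite{CavallettiMondino17}. First I would reduce to $u\in\LIPc(\Omega)$, $u\geq 0$: since $u^{*}=\abs{u}^{*}$ and $\LIPc(\Omega)$ is $W^{1,p}$-dense in $W^{1,p}_0(\Omega)$ by definition, it suffices to prove \eqref{PSIntro} for Lipschitz compactly supported non-negative $u$, and then conclude by lower semicontinuity. Indeed, equimeasurability $\meas_{K,N}(\{u^{*}>t\})=\meas(\{u>t\})$ and monotone convergence of distribution functions make $u\mapsto u^{*}$ continuous from $L^p(\Omega,\meas)$ into $L^p([0,r],\meas_{K,N})$, so the finiteness of $\int_0^r\abs{\nabla u^*}^p\,\di\meas_{K,N}$ in the limit (and hence membership in $W^{1,p}$ of the model) is obtained from the Lipschitz case and the lower semicontinuity of the one-dimensional Sobolev seminorm.

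For $u\in\LIPc(\Omega)$ non-negative set $\mu(t):=\meas(\{u>t\})$. By construction $\meas_{K,N}(\{u^{*}>t\})=\mu(t)$, whence $u^{*}(r)=0$. The metric coarea formula (valid for Lipschitz functions on any m.m.s., and on $\CD(K,N)$ spaces with $\abs{\nabla u}_w$ coinciding $\meas$-a.e.\ with the local Lipschitz constant) yields, for a.e.\ $t>0$,
\begin{equation*}
-\mu'(t)=\int_{\{u=t\}}\frac{\di\Per}{\abs{\nabla u}_w},\qquad -\frac{\di}{\di t}\int_{\{u>t\}}\abs{\nabla u}_w^{p}\,\di\meas=\int_{\{u=t\}}\abs{\nabla u}_w^{p-1}\,\di\Per,
\end{equation*}
where $\Per$ is the perimeter measure of $\{u>t\}$. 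Applying H\"older's inequality with conjugate exponents $p$ and $p/(p-1)$ to the trivial identity $\Per(\{u=t\})=\int_{\{u=t\}}\abs{\nabla u}_w^{(p-1)/p}\cdot\abs{\nabla u}_w^{-(p-1)/p}\,\di\Per$ gives
\begin{equation*}
-\frac{\di}{\di t}\int_{\{u>t\}}\abs{\nabla u}_w^{p}\,\di\meas\;\geq\;\frac{\Per(\{u>t\})^{p}}{(-\mu'(t))^{p-1}}.
\end{equation*}

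Next I would invoke the synthetic Levy-Gromov inequality of \cite{CavallettiMondino17}: $\Per(\{u>t\})\geq \mathcal{I}_{K,N}(\mu(t))$, where $\mathcal{I}_{K,N}(v)$ is the isoperimetric profile of the model $(I_{K,N},\dist_{eu},\meas_{K,N})$ and equals the density $\rho_{K,N}(s_v)$ at the point $s_v$ determined by $\meas_{K,N}([0,s_v])=v$. On the one-dimensional model the rearrangement $u^{*}$ is non-increasing, the level set $\{u^{*}=t\}$ reduces to a single point, and the H\"older step saturates, so the analogous coarea computation yields the equality
\begin{equation*}
-\frac{\di}{\di t}\int_{\{u^{*}>t\}}\abs{\nabla u^{*}}^{p}\,\di\meas_{K,N}=\frac{\mathcal{I}_{K,N}(\mu(t))^{p}}{(-\mu'(t))^{p-1}}.
\end{equation*}
Combining the three displays and integrating over $t\in(0,\ess\sup u)$ produces \eqref{PSIntro}.

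The main technical difficulty lies in making the coarea computation rigorous for the \emph{weak} upper gradient $\abs{\nabla u}_w$ on a general essentially non branching $\CD(K,N)$ space: one has to (i) identify $\abs{\nabla u}_w$ with the local Lipschitz constant of $u$ in order to import the metric coarea formula, (ii) discard the set $\{\abs{\nabla u}_w=0\}$ together with the non-regular values of $t$ by a Sard-type statement for Lipschitz functions, and (iii) establish absolute continuity of $u^{*}$ on $[0,r]$ so that the right hand side of \eqref{PSIntro} truly represents its Sobolev energy in the one-dimensional model; this last point follows from the absolute continuity of $\mu$ which is itself a by-product of the coarea formula applied to $u$. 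The approximation of arbitrary $u\in W^{1,p}_0(\Omega)$ by Lipschitz compactly supported functions, and the $L^p$-stability of monotone rearrangements used to pass to the limit, are then routine.
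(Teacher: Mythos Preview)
Your overall strategy---reduce to non-negative Lipschitz compactly supported functions, combine the metric coarea formula with H\"older and the L\'evy--Gromov inequality of \cite{CavallettiMondino17}, then pass to the limit using $L^p$-continuity of the rearrangement and lower semicontinuity of the energy---is exactly the paper's route. The reduction step and the limiting step are fine as you describe them.

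The genuine gap is in your item (ii). There is \emph{no} Sard-type statement for Lipschitz functions on metric measure spaces (nor even on $\setR^n$ for $n\ge 2$): the set of critical values of a Lipschitz function can have positive measure, and there is nothing preventing $\meas(\{\abs{\nabla u}=0\}\cap\{u>0\})>0$ for a generic $u\in\LIPc(\Omega)$. Without $\abs{\nabla u}\neq 0$ $\meas$-a.e.\ on $\{u>0\}$, your formula $-\mu'(t)=\int_{\{u=t\}}\abs{\nabla u}^{-1}\di\Per$ fails (the distribution function need not be absolutely continuous), the H\"older step breaks down, and you cannot even conclude that $u^*$ is locally absolutely continuous. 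The paper's fix is not a Sard argument but a perturbation: given $u\in\LIPc(\Omega)$, set $u_n:=u+\epsilon_n v$ where $v(x)=\Dist(x,X\setminus\Omega')$ for some $\Omega'\supset\supp u$ and $\epsilon_n\downarrow 0$ is chosen so that $\meas(\{\abs{\nabla u}=\epsilon_n\})=0$. Since $\abs{\nabla v}\equiv 1$ on $\Omega'$, one gets $\abs{\nabla u_n}\neq 0$ $\meas$-a.e.\ on $\{u_n>0\}$, and $u_n\to u$ in $W^{1,p}$. This reduces the Lipschitz case itself to the ``non-degenerate'' Lipschitz case, where your coarea/H\"older computation is rigorous.

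A second, more minor, point: the metric coarea formula is stated for the $\BV$ total variation $\abs{Du}_*=\abs{\nabla u}_1\meas$, not for $\abs{\nabla u}_w$ or the slope. Your displayed identities are literally correct only with $\abs{\nabla u}_1$ in place of $\abs{\nabla u}_w$. On an essentially non-branching $\CD(K,N)$ space one has $\abs{\nabla u}_1\le\abs{\nabla u}$ (always) and $\abs{\nabla u}=\abs{\nabla u}_w$ for Lipschitz $u$ (by Cheeger, using doubling and Poincar\'e), so the paper runs the whole coarea/H\"older argument with $\abs{\nabla u}_1$, obtains $\int\abs{\nabla u}_1^p\di\meas\ge\int\abs{\nabla u^*}^p\di\meas_{K,N}$, and only then uses $\abs{\nabla u}_1\le\abs{\nabla u}=\abs{\nabla u}_w$ to get \eqref{PSIntro}. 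Your item (i) should be amended accordingly: the identification you need is not ``$\abs{\nabla u}_w=$ slope'' to import coarea, but rather to run coarea with $\abs{\nabla u}_1$ and use $\abs{\nabla u}_1\le\abs{\nabla u}_w$ at the end.
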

\autoref{cor:symmapssobolevintosobolev} will be proved in Section \ref{sec:polyaszego}. The  two main ingredients in the proof are the coarea formula and the L\'evy-Gromov isoperimetric inequality, though the full argument requires some work and several 
intermediate results.  
\\

The second main result is a spectral gap for the $p$-Laplacian with Dirichlet boundary conditions, in the spirit of Berard-Meyer-Matei \autoref{thm:BeMe}. In order to state it we need to introduce some more notation.
\\For every $v\in (0,1)$, let $r(v)\in I_{K,N}$ be such that $v=\meas_{K,N}([0,r(v)])$. For any fixed $1<p<+\infty$, for any $v\in(0,1)$ and for any choice of $K>0$ and $1<N<+\infty$,  define
\begin{equation*}
\lambda^p_{K,N,v}:=\inf\left\lbrace\frac{\int_0^{r(v)} \abs{u'}^p\di \meas_{K,N}}{\int_0^{r(v)}u^p\di \meas_{K,N}}:\quad u\in\LIP([0,r(v)];[0,+\infty)),\quad u(r(v))=0 \text{ and $u\not\equiv 0 $} \right\rbrace.
\end{equation*}
For any metric measure space $(X,\dist,\meas)$ with $\meas(X)=1$, for any open subset $\Omega\subset X$ and for any $1<p<+\infty$,  define
\begin{equation*}
\lambda_{X}^p(\Omega):=\inf\left\lbrace\frac{\int_{\Omega} \abs{\nabla u}^p\di \meas}{\int_{\Omega}u^p\di \meas}:\quad u\in\LIPc(\Omega;[0,+\infty))\text{ and $u\not\equiv 0$}\right\rbrace.
\end{equation*}
Observe that for any $2\leq N\in\setN$ and $K>0$,  $\lambda^p_{K,N,v}=\lambda^{p}_{\mathbb{S}^{N}_{K}}(B_{v})$, where  $ \mathbb{S}^{N}_{K}$ is the round $N$-dimensional sphere of radius $\sqrt{\frac{N-1}{K}}$ and  $B_{v}\subset \mathbb{S}^{N}_{K}$ is a metric ball  of volume $v$.
\\We can now state our second main result.

\begin{theorem}[$p$-Spectral gap with Dirichlet boundary conditions]\label{thm:pSpectr}
Let $(X,\dist,\meas)$ be an essentially non branching $\CD(K,N)$  space for some $K>0$, $1<N<+\infty$, and let $\Omega\subset X$ be an open domain with $\meas(\Omega)=v\in(0,1)$. Then it holds
\begin{equation*}
\lambda^p_{X}(\Omega)\ge\lambda^p_{K,N,v}
\end{equation*}
for any $1<p<+\infty$.
\end{theorem}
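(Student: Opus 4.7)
The plan is to deduce \autoref{thm:pSpectr} from the Polya-Szego inequality (\autoref{cor:symmapssobolevintosobolev}) by a direct rearrangement argument on the Rayleigh quotient: any Lipschitz competitor on $\Omega$ is symmetrised to a (one-dimensional) competitor on $[0,r(v)]\subset I_{K,N}$ for which both the denominator is preserved and the numerator can only decrease.

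Concretely, pick any $u\in\LIPc(\Omega;[0,+\infty))$ with $u\not\equiv 0$ and let $r=r(v)$. Since $\LIPc(\Omega)\subset W^{1,p}_0(\Omega)$, \autoref{cor:symmapssobolevintosobolev} applies and yields $u^*\in W^{1,p}([0,r],\dist_{eu},\meas_{K,N})$ with $u^*(r)=0$ and
\begin{equation*}
\int_{\Omega}\abs{\nabla u}_w^p\di\meas\ge\int_{0}^{r}\abs{(u^*)'}^p\di\meas_{K,N}.
\end{equation*}
Since the local Lipschitz slope $\abs{\nabla u}$ is an upper gradient of $u$, we have $\abs{\nabla u}_w\le\abs{\nabla u}$ $\meas$-a.e., so the left-hand side above is $\le\int_{\Omega}\abs{\nabla u}^p\di\meas$, which is the numerator appearing in the definition of $\lambda^p_X(\Omega)$. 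For the denominator, the definition of the monotone rearrangement gives that $u$ and $u^*$ are equi-measurable with respect to $\meas$ and $\meas_{K,N}$ respectively, so by the layer-cake formula
\begin{equation*}
\int_{\Omega}u^p\di\meas=\int_0^{+\infty}p\,t^{p-1}\meas(\{u>t\})\di t=\int_0^{+\infty}p\,t^{p-1}\meas_{K,N}(\{u^*>t\})\di t=\int_0^{r}(u^*)^p\di\meas_{K,N}.
\end{equation*}
Combining these two facts,
\begin{equation*}
\frac{\int_{\Omega}\abs{\nabla u}^p\di\meas}{\int_{\Omega}u^p\di\meas}\ge\frac{\int_{0}^{r}\abs{(u^*)'}^p\di\meas_{K,N}}{\int_{0}^{r}(u^*)^p\di\meas_{K,N}}.
\end{equation*}

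The last step is to compare the right-hand side with $\lambda^p_{K,N,v}$, whose defining infimum is taken over \emph{Lipschitz} (rather than $W^{1,p}$) functions vanishing at $r$. Since $v<1$ we have $r<\sqrt{(N-1)/K}\,\pi$, so the density of $\meas_{K,N}$ is smooth and bounded above and below on $[0,r]$; hence $W^{1,p}([0,r],\meas_{K,N})$ coincides as a set with the standard $W^{1,p}([0,r],\leb^1)$, and elements of $W^{1,p}$ admit continuous representatives on this one-dimensional interval. Given $u^*\in W^{1,p}$ with $u^*(r)=0$, a standard truncation–mollification procedure (e.g.\ $u^*_\eps:=\rho_\eps*(u^*-\eps)^+$ extended by $0$ past $r$) produces Lipschitz functions $u^*_\eps\in\LIP([0,r];[0,+\infty))$ with $u^*_\eps(r)=0$, converging to $u^*$ in $W^{1,p}$; passing to the limit in their Rayleigh quotients shows that the right-hand side above is $\ge\lambda^p_{K,N,v}$. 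Taking the infimum over $u\in\LIPc(\Omega;[0,+\infty))$ concludes the proof.

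I expect the genuinely non-trivial input to be \autoref{cor:symmapssobolevintosobolev} itself (which uses L\'evy–Gromov and a coarea argument, and is established earlier in the paper); once that is available, the remaining steps, including the comparison $\abs{\nabla u}_w\le\abs{\nabla u}$ for Lipschitz $u$, the equi-measurability of the $L^p$-norm, and the Lipschitz density on the one-dimensional model, are routine. The only point requiring a small amount of care is the boundary condition $u^*(r)=0$ in the approximation step, which is preserved precisely because $u\in W^{1,p}_0(\Omega)$ is mapped by \autoref{cor:symmapssobolevintosobolev} to a function vanishing at the right endpoint $r$.
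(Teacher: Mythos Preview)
Your overall strategy---symmetrise a Lipschitz competitor, use equimeasurability for the denominator and Polya--Szego for the numerator---is exactly the paper's. There is, however, one factual slip: the density $h_{K,N}(t)=c_{K,N}^{-1}\sin(t\sqrt{K/(N-1)})^{N-1}$ \emph{vanishes} at $t=0$, so it is not bounded below on $[0,r]$ and the identification $W^{1,p}([0,r],\meas_{K,N})=W^{1,p}([0,r],\leb^1)$ fails. Your mollification argument therefore needs a patch near $0$ (e.g.\ first freeze $u^*$ on $[0,\eps]$ at the value $u^*(\eps)$, using that $u^*$ is bounded because $u\in\LIPc(\Omega)$, then mollify on $[\eps,r]$ where the density is bounded below).

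The paper sidesteps this issue by working one level back: instead of invoking \autoref{cor:symmapssobolevintosobolev} (which only gives $u^*\in W^{1,p}$) and then approximating on the model side, it applies the Lipschitz--to--Lipschitz property of the rearrangement (\autoref{prop:liptolip}) and \autoref{thm:polyaszego} to $u\in\LIPc(\Omega)$ with $\abs{\nabla u}_1\neq 0$ $\meas$-a.e.\ on $\{u>0\}$, obtaining $u^*$ genuinely Lipschitz and hence an admissible competitor for $\lambda^p_{K,N,v}$ with no further work. The reduction to this class of $u$ is done on the \emph{domain} side via \autoref{cor:approximationnonvanishing1}. So the approximation step is the same in spirit, but placed on $\Omega$ rather than on the model interval, which avoids the degenerate weight at the origin.
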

The spectral gap in $\CD(K,N)$ spaces for \emph{Neumann boundary conditions}, called Lichnerowicz inequality, was established by Lott-Villani \cite{LV07} in case $p=2$ (see also \cite{EKS} and \cite{JZ16} for related results in $\RCD(K,N)$ spaces) and by Cavalletti with the first author \cite{CMGT} for general $p\in (1,\infty)$ with different techniques.

\subsection{Rigidity and almost rigidity in $\RCD(K,N)$ spaces}
In order to discuss the rigidity statements associated to \autoref{cor:symmapssobolevintosobolev} and \autoref{thm:pSpectr} let us recall the ``Riemannian'' refinement of the $\CD$ condition, called $\RCD$.
Introduced by  Ambrosio-Gigli-Savar\'e \cite{AmbrosioGigliSavare14} in case $N=\infty$ (see also \cite{AGMR12}), the $\RCD$ condition is a strengthening of the $\CD$ condition by the requirement that the Sobolev space $W^{1,2}((X,\sfd,\mm))$ is Hilbert (or, equivalently, the heat flow, or equivalently the laplacian, is linear).  The main motivation is that the  $\CD$ condition allows Finsler structures while the $\RCD$ condition isolates the ``Riemannian'' spaces.  A key property of the $\RCD$ condition is that, as well as $\CD$, is stable under measured Gromov-Haudorff convergence \cite{AmbrosioGigliSavare14,GigliMondinoSavare13}.  The finite dimensional refinement was subsequently proposed and throughly studied in \cite{Gigli1,EKS,AmbrosioMondinoSavare}  (see also \cite{CMi16}).
We refer to these papers and references therein for a general account on the synthetic formulation of the latter Riemannian-type Ricci curvature lower bounds; for a survey of results, see the Bourbaki seminar \cite{VilB} and the recent ICM-Proceeding \cite{AmbrosioICM}.

We can now state the rigidity result associated to the Polya-Szego inequality \autoref{cor:symmapssobolevintosobolev}.  In order to simplify the notation we will consider $K=N-1$, the case of a general $K>0$ follows by a scaling argument (recall that  $(X,\dist,\meas)$ is an $\RCD(K,N)$  space for some $K>0$ and $1<N<+\infty$ if and only if the rescaled space $(X,\dist',\meas)$, where $\dist':=\sqrt{\frac{N-1}{K}}\dist$, is an $\RCD(N-1,N)$ space).

\begin{theorem}[Rigidity in the Polya-Szego inequality]\label{thm:SpRigPoSzIntro}
 Let $(X,\dist,\meas)$ be an $\RCD(N-1,N)$ space for some $N\in[2,+\infty)$ with $\meas(X)=1$ and let  $\Omega\subset X$ be an open set such that $\meas(\Omega)=v\in (0,1)$.
Assume that  for some $p\in (1,\infty)$ there exists $u\in W^{1,p}_0(\Omega)$, $u\not\equiv 0$, achieving equality  in the Polya-Szego inequality \eqref{PSIntro}.
\\ Then $(X,\dist,\meas)$ is a spherical suspension, namely there exists an $\RCD(N-2,N-1)$ space $(Y,\dist_Y,\meas_Y)$ with $\meas_Y(Y)=1$ such that $(X,\dist,\meas)$ is isomorphic as a metric measure space to $[0,\pi]\times_{\sin}^{N-1}Y$.
 \\If moreover the function $u$ achieving equality in the Polya-Szego inequality \eqref{eq:polyaszego} is Lipschitz and $\abs{\nabla u}(x)\neq 0$ for $\meas$-a.e. $x\in\supp(u)$, then  $u$ is radial; i.e.  $u=f(\dist(\cdot,x_{0}))$, where $x_{0}$ is a tip of a spherical suspension structure of $X$ and $f:[0,\pi]\to \R$ satisfies $|f|=u^{*}$.
\end{theorem}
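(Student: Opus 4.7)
The strategy is to retrace equality in \eqref{PSIntro} through the proof of \autoref{cor:symmapssobolevintosobolev}, whose two engines are the coarea formula and the Lévy--Gromov isoperimetric inequality. After reducing to $u\geq 0$ (by replacing $u$ with $\abs{u}$) and approximating by Lipschitz functions compactly supported in $\Omega$, the Polya--Szego inequality is obtained, schematically, from the chain
\[\int_\Omega \abs{\nabla u}_w^p\di\meas \;=\; \int_0^{+\infty}\!\int_{\{u=t\}}\abs{\nabla u}^{p-1}\di\Per\,\di t \;\geq\; \int_0^{+\infty}\frac{\Per(\{u>t\})^{p}}{(-\mu'(t))^{p-1}}\,\di t \;\geq\; \int_0^{r}\abs{\nabla u^*}^{p}\di\meas_{K,N},\]
where the first identity is coarea, the middle step is Hölder on each level slice combined with $-\mu'(t)=\int_{\{u=t\}}\abs{\nabla u}^{-1}\di\Per$, and the last step combines the Lévy--Gromov isoperimetric bound $\Per(\{u>t\})\geq \Per_{K,N}(\{u^*>t\})$ with the reverse coarea computation on $([0,r],\dist_{eu},\meas_{K,N})$.

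Equality in \eqref{PSIntro} forces equality for $\leb^1$-a.e.\ $t>0$ in both middle inequalities. The crucial consequence is that, for such $t$, the super-level set $E_t:=\{u>t\}$ is an isoperimetric region in $X$: its perimeter attains the Lévy--Gromov optimal value for its volume. Since $u\not\equiv 0$ and $u^*(r)=0$, the map $t\mapsto \meas(E_t)$ sweeps out a non-trivial interval in $(0,v)$, yielding a positive-measure set of volumes for which the isoperimetric deficit vanishes. The rigidity of the Lévy--Gromov isoperimetric inequality in $\RCD(N-1,N)$ spaces, established by Cavalletti with the first author, then forces $(X,\dist,\meas)$ to be isomorphic to a spherical suspension $[0,\pi]\times_{\sin}^{N-1} Y$ over some $\RCD(N-2,N-1)$ space $(Y,\dist_Y,\meas_Y)$ with $\meas_Y(Y)=1$; this proves the first assertion.

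For the radial statement, the additional assumption that $u$ is Lipschitz with $\abs{\nabla u}\neq 0$ $\meas$-a.e.\ on $\supp(u)$ rules out the possibility that $u$ has a plateau of positive measure. Combined with the characterisation of isoperimetric regions in a spherical suspension (which, by the same rigidity analysis, are for each admissible volume precisely the closed distance balls around one of the two tips of $X$), it follows that for a.e.\ $t$ one has $E_t=B_{\rho(t)}(x_t)$ with $x_t$ a tip and $\rho(t)$ determined by $\meas_{K,N}([0,\rho(t)])=\mu(t)$. A short nestedness argument (the two tips are at distance $\pi$, hence small balls around different tips are disjoint and large balls around different tips cannot contain one another) forces the same tip $x_0$ to work for all $t$; hence $\{u>t\}=B_{\rho(t)}(x_0)$, so $u=f(\dist(\cdot,x_0))$ for some non-increasing $f$, and equi-measurability of $u$ with $u^*$ yields $\abs{f}=u^*$. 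The main technical obstacle is making the two middle steps of the display rigorous for arbitrary $u\in W^{1,p}_0(\Omega)$ on a non-smooth $\CD(K,N)$ space and propagating the equality information through the Lipschitz approximation down to a full-measure set of level values $t$; once this is achieved, the geometric consequences follow from the $\RCD$ rigidity theorems.
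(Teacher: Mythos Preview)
Your outline captures the right geometric mechanism but has two genuine gaps, one in each half of the statement.

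\textbf{Space rigidity (first assertion).} The displayed chain you write down is valid only for Lipschitz $u$ with $\abs{\nabla u}_1\neq 0$ $\meas$-a.e.\ on the support (this is exactly the hypothesis of \autoref{thm:polyaszego} and \autoref{lemma:derivativedistribution}). For a general $u\in W^{1,p}_0(\Omega)$ the coarea/H\"older identities on level sets are not available, and the Polya--Szego inequality is obtained by \emph{approximation}: one passes to $u_n\in\LIPc(\Omega)$ with nonvanishing slope, applies the chain to each $u_n$, and takes a limit. Equality for $u$ does \emph{not} give equality for any $u_n$, so one cannot directly conclude that super-level sets of $u$ (or of $u_n$) are isoperimetrically optimal. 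You flag this as ``the main technical obstacle'' but do not indicate how to overcome it. The paper does \emph{not} resolve it by propagating equality through the approximation. Instead it argues by contradiction via the \emph{improved} Polya--Szego inequality \eqref{eq:improvedpolyawithprofile}: if $X$ were not a spherical suspension, lower semicontinuity of the isoperimetric profile (\autoref{prop:lscprofiles}) combined with strict inequality in L\'evy--Gromov for every volume would give a uniform bound $\bigl(\mathcal{I}_{(X,\dist,\meas)}/\mathcal{I}_{N-1,N}\bigr)^p-1\geq c_\epsilon>0$ on compact volume intervals; feeding this into \eqref{eq:improvedpolyawithprofile} for the approximants $u_n$ forces a strict gap that survives in the limit and contradicts equality for $u$. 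This is a genuinely different route from yours.

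\textbf{Radial rigidity (second assertion).} Your ``short nestedness argument'' presupposes that the spherical suspension has exactly two tips at distance $\pi$. But the rigidity statement in L\'evy--Gromov (\autoref{thm:rigiditylevy}(iii)) says only that each optimal super-level set is a ball centred at a tip of \emph{some} spherical suspension structure on $X$, and that structure may vary with $t$: on the round sphere every point is a tip, so nestedness of balls around different centres imposes no contradiction. The paper handles this by a substantially more involved argument: equality in H\"older forces $\abs{\nabla u}$ to be constant $\Per(\{u>t\})$-a.e.\ on each level; composing $u$ with the inverse of $u^*$ produces a function $f$ with $\abs{\nabla f}=1$ $\meas$-a.e.\ on $\supp(u)$; a coarea/volume computation then shows the radius of $\overline{\{f<t\}}$ equals $t$; finally a test-plan argument (transporting mass between small balls around $x_0$ and a putative different centre $x_t$) shows that $\abs{\nabla f}=1$ is incompatible with the centre changing. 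This last step is what replaces your nestedness claim and is where the assumption $\abs{\nabla u}\neq 0$ a.e.\ is genuinely used (cf.\ \autoref{rem:nablauneq0Intro}).
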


When specialized to the smooth setting, the last result reads as follows.

\begin{corollary}[Rigidity in the Polya-Szego inequality-Smooth Setting]\label{thm:SpRigPoSzIntroSmooth}
Let $(M,g)$ be an $N$-dimensional Riemannian manifold, $N\geq 2$, with $\Ric_{g}\geq (N-1) g$ and denote by $\mm$ the normalized Riemannian volume measure. Let $\Omega\subset X$ be an open subset with $\mm(\Omega) \in (0,1)$.
 \\Assume that  for some $p\in (1,\infty)$ there exists $u\in W^{1,p}_0(\Omega)$, $u\not\equiv 0$, achieving equality  in the Polya-Szego inequality \eqref{PSIntro}.
\\ Then $(M,g)$ is isometric to the round sphere ${\mathbb S}^{N}$ of constant sectional curvature one.
 \\If moreover the function $u$  achieving equality  in the Polya-Szego inequality \eqref{eq:polyaszego} is Lipschitz and $\abs{\nabla u}(x)\neq 0$ for a.e. $x\in\supp(u)$, then  $u$ is radial; i.e.  $u=f(\dist(\cdot,x_{0}))$, for some $x_{0}\in {\mathbb S}^{N}$  and $f:[0,\pi]\to \R$ satisfying $|f|=u^{*}$.
\end{corollary}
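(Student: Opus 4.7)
The plan is to obtain this corollary essentially as a specialization of \autoref{thm:SpRigPoSzIntro}, using two classical Riemannian ingredients to upgrade the synthetic conclusion (``spherical suspension'') to the smooth one (``round sphere''). First I would recall that any smooth complete $N$-dimensional Riemannian manifold $(M,g)$ with $\Ric_g\geq (N-1)g$, equipped with its normalized volume measure $\mm$, belongs to the class $\RCD(N-1,N)$; this is by now standard and allows us to plug $(M,\dist_g,\mm)$ directly into the hypothesis of \autoref{thm:SpRigPoSzIntro}. The equality assumption in the Polya-Szego inequality \eqref{PSIntro} for some $u\in W^{1,p}_0(\Omega)$, $u\not\equiv 0$, therefore yields that $(M,\dist_g,\mm)$ is isomorphic, as a metric measure space, to a spherical suspension $[0,\pi]\times_{\sin}^{N-1}Y$ for some $\RCD(N-2,N-1)$ space $(Y,\dist_Y,\mm_Y)$.

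Next I would extract the geometric information from the suspension structure. By construction, the two tips $\{0\}$ and $\{\pi\}$ of $[0,\pi]\times_{\sin}^{N-1}Y$ are actual points of the support (the $\sin^{N-1}$-weighted cross section collapses to a single point at the endpoints), and their mutual distance in the suspension metric equals $\pi$. Since the mm-isomorphism restricts to an isometry between the supports, the two tips correspond to points $x_0,x_1\in M$ with $\dist_g(x_0,x_1)=\pi$. In particular $\mathrm{diam}(M,g)\geq \pi$, and together with the Bonnet-Myers bound $\mathrm{diam}(M,g)\leq \pi$ we obtain equality. Cheng's maximal diameter theorem then forces $(M,g)$ to be isometric to the round sphere $\mathbb{S}^N$ of constant sectional curvature one, which is exactly the first assertion.

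The radial part follows by directly invoking the second assertion of \autoref{thm:SpRigPoSzIntro}: under the additional Lipschitz and non-degeneracy hypothesis on $u$, that theorem gives $u=f(\dist(\cdot,x_0))$ with $x_0$ a tip of some spherical suspension structure on $X=M$. Under the identification $M\cong \mathbb{S}^N$ from the previous paragraph, the tips correspond to a pair of antipodal points, so the tip $x_0$ is just a point of $\mathbb{S}^N$, and $f:[0,\pi]\to\R$ with $|f|=u^{*}$ is inherited verbatim from \autoref{thm:SpRigPoSzIntro}.

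The only potential obstacle is making sure that the mm-isomorphism at the level of the $\RCD$ statement really transfers the ``distance $\pi$ between tips'' information to $M$. This is where one should be a little careful: one needs to check that the tips lie in $\supp(\mm_{\mathrm{susp}})$ (they do, since every neighborhood of a tip has positive $\sin^{N-1}$-weighted mass) and that a measure-preserving isometry between supports preserves pointwise distances between support points. Once this is granted the rest is soft, and the entire corollary reduces to \autoref{thm:SpRigPoSzIntro} plus Bonnet-Myers plus Cheng; I do not expect to need any new analytic input in the smooth setting.
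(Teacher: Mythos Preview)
Your proposal is correct and follows the route the paper intends: the corollary is presented in the paper as a direct specialization of \autoref{thm:SpRigPoSzIntro} to the smooth setting, without a separate proof, and you have correctly supplied the standard bridge (Bonnet--Myers plus Cheng's maximal diameter theorem) needed to upgrade ``spherical suspension'' to ``round sphere $\mathbb{S}^N$'' when the ambient space is a smooth $N$-manifold with $\Ric_g\ge (N-1)g$. Your care about the tips lying in the support and the mm-isomorphism being an isometry on supports is appropriate and the argument goes through as written.
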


Let us mention that our proof of both \autoref{thm:SpRigPoSzIntro} and  \autoref{thm:SpRigPoSzIntroSmooth} builds on top of the almost rigidity in L\'evy-Gromov inequality  \cite{CavallettiMondino17} and seems new even in the smooth setting. The rough idea is that if the space $X$ is not a spherical suspension then by the  almost rigidity in L\'evy-Gromov inequality, there is a gap in the isoperimetric profile of $X$ and the model isoperimetric profile $I_{N-1,N}$. Thus it is not possible to achieve almost equality in the  Polya-Szego inequality  for suitable approximations $u_{n}\in \LIP_{c}(\Omega)$ of $u$ with $\abs{\nabla u_{n}}(x)\neq 0$ $\meas$-a.e. $x\in\supp(u_{n})$, hence contradicting that $u\in W^{1,p}_{0}(\Omega)$ achieves equality in Polya-Szego inequality. The rigidity statement in the function is more subtle and basically consists in proving that the  structure of spherical suspension induced by the  optimality in L\'evy-Gromov by every super-level set $\{u>t\}$ is independent of $t$.

\begin{remark}\label{rem:nablauneq0Intro}
A natural question about  \autoref{thm:SpRigPoSzIntro} regards  sharpness of the assumptions.  Clearly, if $u\equiv 0$  also the decreasing rearrangement $u^{*}$  vanishes; hence $u,u^{*}$  achieve equality in the Polya-Szego inequality but one cannot expect to infer anything on the space. 

Let us  also stress that the condition $|\nabla u|\neq 0$ $\mm$-a.e. is necessary to infer that $u(\cdot)=u^{*}\circ \dist(x_{0}, \cdot)$, even knowing a priori that the space is a spherical suspension with pole $x_{0}$ and that $u$ achieves equality in Polya-Szego inequality. Indeed let  $X={\mathbb S^{N}}$ be the round sphere, fix points $x_{1}\neq x_{2}\in \mathbb{S}^N$ and radii $0<r_1<r_2<r_3$ such that $B_{r_1}(x_1)\subset B_{r_2}(x_2)\subset B_{r_3}(x_2)$. Consider a function $u:{\mathbb S^{N}}\to [0,1]$ which is radially decreasing on $B_{r_1}(x_1)$ (with respect to the pole $x_1$), constant on $B_{r_2}(x_2)\setminus B_{r_1}(x_1)$ and radially decreasing on $B_{r_3}(x_2)\setminus B_{r_2}(x_2)$ (w.r.t. the pole $x_2$). It is easy to check that such a function $u$ achieves equality in the Polya-Szego inequality but is not globally radial.
\end{remark}

Our second rigidity result concerns the  Dirichlet $p$-spectral gap.

\begin{theorem}[Rigidity for the Dirichlet $p$-spectral gap]\label{thm:RigpSprectrIntro}
Let $(X,\dist,\meas)$ be an $\RCD(N-1,N)$ space. Let $\Omega\subset X$ be an open subset with $\meas(\Omega)=v$ for some $v\in(0,1)$ and suppose that $\lambda^p_{X}(\Omega)=\lambda^p_{N-1,N,v}$.
Then
\begin{enumerate}
\item $(X,\dist,\meas)$ is isomorphic to a spherical suspension: i.e. there exists an $\RCD(N-2,N-1)$ space $(Y,\dist_Y,\meas_Y)$ such that  $X\simeq[0,\pi]\times_{\sin}^{N-1} Y$;
\item the topological closure $\bar{\Omega}$ of $\Omega\subset X$ coincides with the closed metric ball centred at one of the tips of the spherical suspension: i.e. either  $\bar{\Omega}=[0,R]\times Y  $ or $\bar{\Omega}=[\pi-R,\pi]\times Y$, where $R\in (0,\pi)$ is such that $\meas_{N-1,N}([0,R])=v$;
\item  the eigenfunction $u\in W^{1,p}_{0}(\Omega)$ associated to $\lambda^{p}_{X}(\Omega)$ is unique up to a scalar factor and it coincides with the radial one: i.e. called   $x_{0}$ the centre of $\Omega$ and $w:[0,R]\to [0,+\infty)$ the first eigenfunction on $([0,R], \dist_{{eu}}, \mm_{N-1,N})$ corresponding to $\lambda^{p}_{N-1,N,v}$ (i.e. with the constraint $w(R)=0$), it holds   that $u(\cdot)=w\circ \dist(x_{0}, \cdot)$.
\end{enumerate}
 \end{theorem}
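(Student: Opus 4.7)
The plan is to reduce rigidity for the Dirichlet $p$-spectral gap to the rigidity in the Polya--Szego inequality (Theorem \ref{thm:SpRigPoSzIntro}) by exploiting the variational characterization of $\lambda^p_X(\Omega)$. As a preliminary I would show that the infimum defining $\lambda^p_X(\Omega)$ is attained by some nonnegative $u\in W^{1,p}_0(\Omega)$: since $K=N-1>0$ forces $X$ to be compact and the embedding $W^{1,p}_0(\Omega)\hookrightarrow L^p(\Omega)$ is compact in the $\RCD(N-1,N)$ setting, the direct method yields a minimizer $u\ge 0$, which one can show to be locally Lipschitz on $\Omega$ and strictly positive on the interior of $\supp u$ (by a Harnack/strong minimum principle for $p$-Laplace eigenfunctions in $\RCD$ spaces).

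Combining the Polya--Szego inequality (Theorem \ref{cor:symmapssobolevintosobolev}) with equimeasurability of $u$ and $u^*$ gives the chain
\begin{equation*}
\lambda^p_X(\Omega)=\frac{\int_\Omega|\nabla u|_w^p\di\meas}{\int_\Omega u^p\di\meas}\ge\frac{\int_0^{r(v)}|(u^*)'|^p\di\meas_{N-1,N}}{\int_0^{r(v)}(u^*)^p\di\meas_{N-1,N}}\ge\lambda^p_{N-1,N,v}.
\end{equation*}
The equality hypothesis forces equality in both inequalities: $u$ realizes equality in the Polya--Szego inequality, and $u^*$ minimizes the 1D Rayleigh quotient. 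Elementary ODE analysis on $([0,r(v)],\dist_{eu},\meas_{N-1,N})$ shows that the 1D minimizer is unique up to a positive scalar, is strictly positive and strictly decreasing on $[0,r(v))$, vanishes at $r(v)$, and satisfies $|(u^*)'|>0$ $\meas_{N-1,N}$-a.e. Transferring this non-degeneracy back via the equality case in the coarea identity underlying the proof of Polya--Szego yields $|\nabla u|>0$ for $\meas$-a.e.\ point in $\{u>0\}$.

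At this point Theorem \ref{thm:SpRigPoSzIntro} applies in its sharp form and produces (1): $(X,\dist,\meas)$ is a spherical suspension $[0,\pi]\times_{\sin}^{N-1}Y$ over some $\RCD(N-2,N-1)$ space $Y$, with a tip $x_0$ such that $u=w\circ\dist(x_0,\cdot)$ and $w=u^*$. For (2), the set $\{u>0\}$ coincides with the open ball $B_R(x_0)$ where $R=r(v)$; since $\supp u\subset\bar\Omega$ and $\meas(\Omega)=v=\meas(B_R(x_0))$, one deduces $\bar\Omega=\overline{B_R(x_0)}=[0,R]\times Y$. For (3), applying the same argument to any other minimizer $\tilde u$ shows that $\tilde u$ is radial about a tip $\tilde x_0$ with $\supp\tilde u=\overline{B_R(\tilde x_0)}\subset\bar\Omega$; measure matching forces $\tilde x_0=x_0$, and the 1D uniqueness then gives $\tilde u=c\,u$ for some $c>0$.

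The main obstacle I anticipate is the gradient non-degeneracy step, i.e.\ ruling out $|\nabla u|=0$ on a $\meas$-positive subset of $\{u>0\}$ in the non-smooth setting. This is the natural expectation for a first $p$-Laplace eigenfunction, but making it rigorous without classical elliptic regularity demands a careful inspection of the equality cases in the coarea and isoperimetric arguments behind the proof of Theorem \ref{cor:symmapssobolevintosobolev}.
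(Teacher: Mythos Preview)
Your reduction to equality in Polya--Szego is exactly the right first move, and for part (1) it already suffices: the first clause of Theorem~\ref{thm:SpRigPoSzIntro} needs only $u\in W^{1,p}_0(\Omega)$ with $u\not\equiv 0$ achieving equality, no gradient non-degeneracy, so the spherical suspension structure follows. The gap is in your route to (2) and (3). You write that ``transferring the non-degeneracy back via the equality case in the coarea identity'' yields $|\nabla u|>0$ $\meas$-a.e. on $\{u>0\}$, but the equality analysis in the proof of Polya--Szego (the H\"older step that forces $|\nabla u|$ to be constant along level sets) is carried out in the paper only under the \emph{a priori} hypotheses that $u$ is Lipschitz and $|\nabla u|_1\neq 0$ $\meas$-a.e.; invoking it to produce those very hypotheses is circular. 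Moreover Theorem~\ref{thm:SpRigPoSzIntro} asks for $u\in\LIP(X)$, while \cite{LatvalaMarolaPere} gives only continuity of the eigenfunction, so even the regularity input to the sharp form is missing.

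The paper sidesteps both issues. For (1)--(2) it does not use Theorem~\ref{thm:SpRigPoSzIntro} at all: since $u^*$ solves the one-dimensional eigen-ODE it is $C^1$ and strictly decreasing, which is exactly the hypothesis of Remark~\ref{rm:extensionofusefulformula}, so the \emph{improved} Polya--Szego inequality \eqref{eq:improvedpolyawithper} applies directly to the $W^{1,p}_0$ eigenfunction and equality forces $\Per(\{u>t\})=\mathcal I_{N-1,N}(\mu(t))$ for a.e.\ $t$; rigidity in L\'evy--Gromov (Theorem~\ref{thm:rigiditylevy}) then gives the suspension and identifies $\bar\Omega$ with a ball. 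For (3) the paper does not try to make $u$ radial through Polya--Szego rigidity; instead it proves uniqueness of the first eigenfunction by the Kawohl--Lindqvist convexity trick: if $v_1,v_2$ are two first eigenfunctions then $v=(v_1^p+v_2^p)^{1/p}$ is again a first eigenfunction, and equality in the pointwise Jensen/triangle inequalities for the tangent module forces $\nabla\log v_1=\nabla\log v_2$, hence $v_2=cv_1$. Since the radial candidate $w\circ\dist(x_0,\cdot)$ is manifestly a first eigenfunction on the ball, uniqueness finishes. This route never needs Lipschitz regularity or $|\nabla u|\neq 0$ for the abstract eigenfunction, which is precisely the obstacle you flagged.
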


The proof of \autoref{thm:RigpSprectrIntro} builds on top of the rigidity in the L\'evy-Gromov inequality proved in \cite{CavallettiMondino17}; indeed the rough idea to establish the first and second assertions is to prove that if $\lambda^p_{X}(\Omega)=\lambda^p_{N-1,N,v}$, then the super-level sets of the first $p$-eigenfunction are optimal in the L\'evy-Gromov inequality. The proof of  the third assertion requires more work.  The rough idea is to show that the first Dirichlet $p$-eigenfunction is unique thus, knowing already that $\Omega$ is almost a ball centred at a tip of the spherical suspension and hence there is already a natural \emph{radial}  first Dirichlet $p$-eigenfunction suggested by the model space, it follows that $u$ must be radial. In the proof of the uniqueness of the first Dirichlet $p$-eigenfunction we have been inspired by a paper of Kawhol-Lindqvist  \cite{KawohlLindqvist06} dealing with smooth Riemannian manifolds and, in order to implement the arguments in non-smooth setting, we make use of the theory of tangent modules of  m.m.s. developed by Gigli \cite{Gigli14} (after Weaver \cite{Weaver}).
\\

Let us also mention that the rigidity for the \emph{Neumann} spectral gap,  known as Obata Theorem,  was established in case $p=2$ by Ketterer \cite{K15} and by Cavalletti with the first author \cite{CMGT} for general $p\in (1,\infty)$.
\\

We conclude the introduction with an almost-rigidity result which seems interesting even in the smooth framework, i.e. if  $(X,\dist,\meas)$ is an $N$-dimensional Riemannian manifold with Ricci curvature bounded below by $N-1$. Let us point out that in \cite{Bertrand05} some related almost rigidity results have been obtained in the smooth setting for $p=2$ under the  additional assumption that the domain $\Omega$ is (mean) convex and $\mm(\Omega)\leq 1/2$.\\
A key point in the proof is that the class of $\RCD(N-1,N)$ spaces is compact with respect to mGH convergence, a fact which clearly fails in the smooth setting as the limits usually present singularities.
\\We denote by $\sfd_{mGH}$ the measured Gromov Hausdorff distance between two normalized compact  metric measure spaces.

\begin{theorem}[Almost rigidity in the $p$-spectral gap]\label{thm:almostrigidity}
Fix $2\leq N<+\infty$ and $v\in(0,1)$. Then, for any $\epsilon>0$, there exists $\delta=\delta(v,N)>0$ with the following property: let $(X,\dist,\meas)$ be an $\RCD(N-1,N)$ m.m.s. with $\meas(X)=1$ and $\Omega\subset X$ be an open domain with $\meas(\Omega)=v$ and $\lambda^{p}_{X}(\Omega)<\lambda^p_{N-1,N,v}+\delta$.

 Then there exists a spherical suspension $(Y,\dist_Y,\meas_Y)$ (i.e. there exists an $\RCD(N-2,N-1)$ space $(Z,\dist_Z,\meas_Z)$ with $\meas_Z(Y)=1$ such that $Y$ is isomorphic as a metric measure space to $[0,\pi]\times_{\sin}^{N-1}Z$) such that
\begin{equation*}
\dist_{mGH}\left((X,\dist,\meas),(Y,\dist_Y,\meas_Y)\right)<\epsilon.
\end{equation*}
\end{theorem}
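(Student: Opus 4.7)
The plan is to argue by contradiction, using compactness of the class of $\RCD(N-1,N)$ spaces of unit mass under mGH convergence, and then to invoke the sharp rigidity \autoref{thm:RigpSprectrIntro}. Suppose the statement fails: then there exist $\eps_0>0$ and a sequence $(X_n,\dist_n,\meas_n)$ of $\RCD(N-1,N)$ spaces with $\meas_n(X_n)=1$, open sets $\Omega_n\subset X_n$ with $\meas_n(\Omega_n)=v$, and eigenvalues $\lambda^p_{X_n}(\Omega_n)\to\lambda^p_{N-1,N,v}$, while $\sfd_{mGH}(X_n,Y)\ge\eps_0$ for every spherical suspension $Y$. By compactness of the $\RCD(N-1,N)$ class, up to subsequences $X_n\to X_\infty$ in mGH sense, with $X_\infty$ an $\RCD(N-1,N)$ space of unit mass.

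\textbf{Limit of the eigenfunctions.} Let $u_n\in W^{1,p}_0(\Omega_n)$ be Dirichlet $p$-eigenfunctions (or near-minimizers) normalized by $\norm{u_n}_{L^p(\meas_n)}=1$, so that $\int_{X_n}|\nabla u_n|_w^p\di\meas_n\to \lambda^p_{N-1,N,v}$. By $L^p$-precompactness of sequences with uniformly bounded $p$-Cheeger energy along mGH-converging $\RCD(N-1,N)$ spaces, together with lower semicontinuity of the $p$-Cheeger energy under such strong convergence, I extract a subsequence converging in $L^p$ to some $u_\infty\in W^{1,p}(X_\infty)$ with $\norm{u_\infty}_{L^p(\meas_\infty)}=1$ and $\int_{X_\infty}|\nabla u_\infty|_w^p\di\meas_\infty\le \lambda^p_{N-1,N,v}$. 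Applying strong $L^p$-convergence to the superlevel sets $\{u_n>\eta\}\subset\Omega_n$ and letting $\eta\downarrow 0$ yields $v_\infty:=\meas_\infty(\{u_\infty>0\})\le v$.

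\textbf{Forcing the equality and applying rigidity.} For $\eta>0$, the truncation $u_\infty^{(\eta)}:=(u_\infty-\eta)^+$ lies (up to standard approximation) in $W^{1,p}_0(\Omega_\eta)$ for some open $\Omega_\eta\supset\{u_\infty>\eta\}$ with $\meas_\infty(\Omega_\eta)\to v_\infty$ as $\eta\downarrow 0$. Combining \autoref{thm:pSpectr} on $X_\infty$ with the strict monotonicity of $v\mapsto\lambda^p_{N-1,N,v}$ (the Dirichlet eigenvalue on the one-dimensional weighted interval $[0,r(v)]$ strictly decreases as $r(v)$ grows) gives
\begin{equation*}
\lambda^p_{N-1,N,v}\le\lambda^p_{N-1,N,v_\infty}\le \lambda^p_{X_\infty}(\Omega_\eta)\le \frac{\int|\nabla u_\infty^{(\eta)}|_w^p\di\meas_\infty}{\int(u_\infty^{(\eta)})^p\di\meas_\infty}.
\end{equation*}
As $\eta\downarrow 0$ the right-hand side tends to $\int|\nabla u_\infty|_w^p\di\meas_\infty\le\lambda^p_{N-1,N,v}$, so every inequality above is an equality; in particular $v_\infty=v$ and some $\Omega\subset X_\infty$ with $\meas_\infty(\Omega)=v$ realises $\lambda^p_{X_\infty}(\Omega)=\lambda^p_{N-1,N,v}$. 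Then \autoref{thm:RigpSprectrIntro} forces $X_\infty$ to be a spherical suspension, contradicting the construction. The main technical obstacle is twofold: (i) the $L^p$-compactness and $p$-Cheeger-energy lower semicontinuity for $W^{1,p}$-bounded sequences along mGH-converging $\RCD(N-1,N)$ spaces (classical for $p=2$ via Mosco convergence, but requiring the nonlinear analogue for general $p$); and (ii) the preservation of the volume constraint in the limit, where \emph{a priori} one only knows $v_\infty\le v$ and it is the strict monotonicity of $v\mapsto\lambda^p_{N-1,N,v}$ that recovers $v_\infty=v$ and makes the rigidity statement applicable.
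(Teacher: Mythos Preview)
Your overall strategy---contradiction plus mGH compactness plus rigidity on the limit space---matches the paper's, but the step where you produce an open $\Omega\subset X_\infty$ with $\meas_\infty(\Omega)=v$ and $\lambda^p_{X_\infty}(\Omega)=\lambda^p_{N-1,N,v}$ is a genuine gap. From your chain of inequalities you only get $\lambda^p_{X_\infty}(\Omega_\eta)\to\lambda^p_{N-1,N,v}$ and $\meas_\infty(\Omega_\eta)\to v$ as $\eta\downarrow 0$; this does not furnish a single domain achieving equality. The natural candidate $\{u_\infty>0\}$ need not be open (you have no continuity for $u_\infty$), and even granting openness, the membership $u_\infty\in W^{1,p}_0(\{u_\infty>0\})$ is not automatic---it is precisely the kind of ``$H=W$'' statement that fails without regularity of the domain. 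So you cannot invoke \autoref{thm:RigpSprectrIntro}. Your acknowledged obstacle (i), the nonlinear Mosco-type compactness and lower semicontinuity of the $p$-Cheeger energy along varying $\RCD$ spaces for general $p\neq 2$, is a further hurdle not supplied by the paper's toolbox.

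The paper sidesteps both issues by never passing to the limit with functions on the varying spaces $X_n$. Instead it transfers everything to the \emph{fixed} one-dimensional model via the decreasing rearrangement: taking $u_n\in\LIPc(\Omega_n)$ with $\abs{\nabla u_n}\neq 0$ $\meas_n$-a.e., the rearrangements $u_n^*$ all live on $[0,r(v)]$, and the improved Polya--Szego inequality \eqref{eq:improvedpolyawithprofile} gives
\begin{equation*}
\int_0^{\sup u_n^*}\left(\frac{\mathcal{I}_{(X_n,\dist_n,\meas_n)}(\mu_n(t))}{\mathcal{I}_{N-1,N}(\mu_n(t))}\right)^p f_{u_n}(t)\di t\le \lambda^p_{N-1,N,v}+\tfrac{2}{n}.
\end{equation*}
Compactness on the fixed interval is elementary, and the limit $u^*$ is forced to be the model eigenfunction. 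One then passes to the limit in the integral above using lower semicontinuity of isoperimetric profiles under mGH convergence (\autoref{prop:lscprofiles}) together with a joint lower semicontinuity lemma for weakly converging measures, concluding that $\mathcal{I}_{(X_\infty,\dist,\meas)}(\mu(t))=\mathcal{I}_{N-1,N}(\mu(t))$ for some $t$ with $\mu(t)\in(0,1)$. The contradiction then comes from L\'evy--Gromov rigidity (\autoref{thm:rigiditylevy}), \emph{not} from the spectral rigidity; no limit domain is ever needed.
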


\noindent{\bf Acknowledgement:}  
The work was developed while A.M. was based in the Mathematics
Institute at the University of Warwick and, partly, when  D.S. was visiting the institute. They would like to thank the institute for the excellent working
conditions and stimulating environment.
\\A.M. is supported by the EPSRC First Grant EP/R004730/1 ``Optimal transport and geometric analysis'' and by the ERC Starting Grant  802689 ``CURVATURE''.
\\The authors wish to thank L. Ambrosio for inspiring discussions around the topics of the paper.

\section{Preliminaries}\label{sec:preliminaries}

Throughout the paper $(X,\dist,\meas)$ will be a complete and separable metric measure space with $\supp(\meas)=X$ and $\meas(X)<\infty$. We will denote by $\Borel(X)$ the family of Borel subsets of $X$ and  by $\LIP(X)$ the space of real valued Lipschitz functions over $X$. For any open domain $\Omega\subset X$, $\LIPc(\Omega)$ and $\LIPloc(\Omega)$ will stand for the space of Lipschitz functions with compact support in $\Omega$ and the space of locally Lipschitz functions in $\Omega$.
 Given $u\in\LIPloc(X)$, its slope $\abs{\nabla u}(x)$ is defined as 
\begin{equation*}
\abs{\nabla u}(x):=
\begin{cases*}
\limsup_{y\to x}\frac{\abs{u(x)-u(y)}}{d(x,y)}& \text{if $x$ is not isolated}\\
0&\text{otherwise,}
\end{cases*}
\end{equation*}
moreover we introduce the notation $\Lip(u)$ for the global Lipschitz constant of $u\in\LIP(X)$.
\\For any interval $I\subset \setR$ we will denote by $\AC(I;X)$ the space of absolutely continuous curves $\gamma:I\to X$. For any $\gamma\in\AC(I;X)$,  the metric derivative $\abs{\gamma'}:I\to[0,+\infty]$ defined by
\begin{equation*}
\abs{\gamma'}(t):=\limsup_{s\to t}\frac{\dist(\gamma(s),\gamma(t))}{\abs{t-s}},
\end{equation*} 
provides the following representation of the  the length of $\gamma$:
\begin{equation*}
\mathit{l}(\gamma)=\int_I\abs{\gamma'}(t)\di t.
\end{equation*}

Next we introduce Sobolev functions and Sobolev spaces over $(X,\dist,\meas)$. We refer for instance to \cite{AmbrosioGigliSavare13,AmbrosioColomboDiMarino} for a detailed discussion about this topic.

\begin{definition}[Sobolev spaces and $p$-energy]\label{def:sobolev}
Fix any $1<p<+\infty$. The $p$-Cheeger energy $\Ch_p:L^{p}(X,\meas)\to [0,+\infty]$ is a convex $L^{p}(X,\meas)$-lower semicontinuous functional defined by
\begin{equation}\label{eq:defCheeger}
\Ch_p(f):=\inf\left\lbrace \liminf_{n\to\infty}\frac{1}{p}\int\abs{\nabla f_n}^p\di\meas: f_n\in\LIP(X)\cap L^{p}(X,\meas),\norm{f_n-f}_{L^p}\to 0\right\rbrace. 
\end{equation}
Moreover, we define $W^{1,p}(X,\dist,\meas):=\set{\Ch_p<+\infty}$ and we remark that, when endowed with the norm 
\begin{equation*}
\norm{f}_{W^{1,p}}:=\left(\norm{f}^p_{L^p}+p\Ch_p(f)\right)^{\frac{1}{p}},
\end{equation*}
the Sobolev space $W^{1,p}(X,\dist,\meas)$ is a Banach space.
\end{definition}

By looking at the optimal approximating sequence in \eqref{eq:defCheeger} one can find a minimal object called \textit{minimal weak upper gradient} $\abs{\nabla f}_w$, providing the integral representation
\begin{equation*}
\Ch_p(f)=\frac{1}{p}\int\abs{\nabla f}_w^p\di\meas
\end{equation*} 
for any $f\in W^{1,p}(X,\dist,\meas)$.
We remark that without further regularity assumptions on the metric measure space the minimal weak upper gradient depends also on the integrability exponent $p$; nevertheless we will always omit this dependence in the notation.

\begin{definition}[Local Sobolev spaces]\label{def:sobolevlocal}
Given an open set $\Omega\subset X$, for any $1<p<+\infty$ we will denote by $W^{1,p}_0(\Omega)$ the closure of $\LIPc(\Omega)$ in $W^{1,p}(X,\dist,\meas)$, with respect to the $W^{1,p}$ norm.
\end{definition}

\subsection{Essentially non branching,  $\CD(K,N)$ and $\RCD(K,N)$ metric measure spaces}\label{SS:CDDef}

Denote by 
$$
\Geo(X) : = \{ \gamma \in C([0,1], X):  \sfd(\gamma(s),\gamma(t)) = |s-t| \sfd(\gamma(0),\gamma(1)), \text{ for every } s,t \in [0,1] \}
$$
the space of constant speed geodesics.  The metric space $(X,\sfd)$ is a \emph{geodesic space} if and only if for each $x,y \in X$ 
there exists $\gamma \in \Geo(X)$ so that $\gamma(0) =x, \gamma(1) = y$.
\medskip

We denote with  $\Prob(X)$ the  space of all Borel probability measures over $X$ and with  $\Prob_{2}(X)$ the space of probability measures with finite second moment.
The space $\Prob_{2}(X)$ can be  endowed with the $L^{2}$-Kantorovich-Wasserstein distance  $W_{2}$ defined as follows:  for $\mu_0,\mu_1 \in\Prob_{2}(X)$,  set
\begin{equation}\label{eq:W2def}
  W_2^2(\mu_0,\mu_1) := \inf_{ \pi} \int_{X\times X} \sfd^2(x,y) \, \di \pi(x,y),
\end{equation}
where the infimum is taken over all $\pi \in\Prob(X \times X)$ with $\mu_0$ and $\mu_1$ as the first and the second marginal.
The space $(X,\sfd)$ is geodesic  if and only if the space $(\Prob_2(X), W_2)$ is geodesic. 
\\For any $t\in [0,1]$,  let ${\rm e}_{t}$ be the evaluation map: 
$$
  {\rm e}_{t} : \Geo(X) \to X, \qquad {\rm e}_{t}(\gamma) : = \gamma_{t}.
$$
Any geodesic $(\mu_t)_{t \in [0,1]}$ in $(\Prob_2(X), W_2)$  can be lifted to a measure $\nu \in \Prob(\Geo(X))$, 
so that $({\rm e}_t)_\sharp \, \nu = \mu_t$ for all $t \in [0,1]$. 
\\Given $\mu_{0},\mu_{1} \in\Prob_{2}(X)$, we denote by 
$\Opt(\mu_{0},\mu_{1})$ the space of all $\nu \in \Prob(\Geo(X))$ for which $({\rm e}_0,{\rm e}_1)_\sharp\, \nu$ 
realizes the minimum in \eqref{eq:W2def}. Such a $\nu$ will be called \emph{dynamical optimal plan}. If $(X,\sfd)$ is geodesic, then the set  $\Opt(\mu_{0},\mu_{1})$ is non-empty for any $\mu_0,\mu_1\in\Prob_2(X)$.
\\We will also consider the subspace $\Prob_{2}(X,\sfd,\meas)\subset\Prob_{2}(X)$
formed by all those measures absolutely continuous with respect with $\meas$.
\medskip

A set $G \subset \Geo(X)$ is \emph{a set of non-branching geodesics} if and only if for any $\gamma_{1},\gamma_{2} \in G$, it holds:
$$
\exists \;  \bar t\in (0,1) \text{ such that } \ \forall t \in [0, \bar t\,] \quad  \gamma_{1}(t) = \gamma_{2}(t)   
\quad 
\Longrightarrow 
\quad 
\gamma_{1}(s) = \gamma_{2}(s), \quad \forall s \in [0,1].
$$
In the paper we will mostly consider essentially non-branching spaces, let us recall their definition (introduced by T. Rajala and Sturm \cite{RS2014}). 
\begin{definition}\label{def:ENB}
A metric measure space $(X,\sfd, \meas)$ is \emph{essentially non-branching} (e.n.b. for short) if and only if for any $\mu_{0},\mu_{1} \in\Prob_{2}(X)$,
with $\mu_{0},\mu_{1}$ absolutely continuous with respect to $\meas$, any element of $\Opt(\mu_{0},\mu_{1})$ is concentrated on a set of non-branching geodesics.
\end{definition}
It is clear that if $(X,\sfd)$ is a  smooth Riemannian manifold  then any subset $G \subset \Geo(X)$  is a set of non branching geodesics, in particular any smooth Riemannian manifold is essentially non-branching.
\medskip

In order to formulate curvature properties for $(X,\sfd,\meas)$  we recall the definition of the  distortion coefficients:  for $K\in \R, N\in [1,\infty), \theta \in (0,\infty), t\in [0,1]$, set 
\begin{equation}\label{eq:deftau}
\tau_{K,N}^{(t)}(\theta): = t^{1/N} \sigma_{K,N-1}^{(t)}(\theta)^{(N-1)/N},
\end{equation}
where the $\sigma$-coefficients are defined 
as follows:
given two numbers $K,N\in \R$ with $N\geq0$, we set for $(t,\theta) \in[0,1] \times \R_{+}$, 
\begin{equation}\label{eq:Defsigma}
\sigma_{K,N}^{(t)}(\theta):= 
\begin{cases}
\infty, & \textrm{if}\ K\theta^{2} \geq N\pi^{2}, \crcr
\displaystyle  \frac{\sin(t\theta\sqrt{K/N})}{\sin(\theta\sqrt{K/N})} & \textrm{if}\ 0< K\theta^{2} <  N\pi^{2}, \crcr
t & \textrm{if}\ K \theta^{2}<0 \ \textrm{and}\ N=0, \ \textrm{or  if}\ K \theta^{2}=0,  \crcr
\displaystyle   \frac{\sinh(t\theta\sqrt{-K/N})}{\sinh(\theta\sqrt{-K/N})} & \textrm{if}\ K\theta^{2} \leq 0 \ \textrm{and}\ N>0.
\end{cases}
\end{equation}

\noindent
Let us also recall the definition of the R\'enyi Entropy functional ${\mathcal E}_{N} :\Prob(X) \to [0, \infty]$:
\begin{equation}\label{eq:defEnt}
{\mathcal E}_{N}(\mu)  := \int_{X} \rho^{1-1/N}(x) \,\di\meas,
\end{equation}
where $\mu = \rho \meas + \mu^{s}$ with $\mu^{s}\perp \meas$.
\medskip

The curvature-dimension condition was introduced independently by Lott-Villani \cite{LottVillani} and Sturm \cite{Sturm06I,Sturm06II}, let us recall its definition.

\begin{definition}[$\CD$ condition]\label{def:CD}
Let $K \in \R$ and $N \in [1,\infty)$. A metric measure space  $(X,\sfd,\meas)$ verifies $\CD(K,N)$ if for any two $\mu_{0},\mu_{1} \in \Prob_2(X,\sfd,\meas)$ 
with bounded support there exist $\nu \in \Opt(\mu_{0},\mu_{1})$ and  $\pi\in\Prob(X\times X)$ $W_{2}$-optimal plan such that $\mu_{t}:=(\ee_{t})_{\sharp}\nu \ll \meas$ and for any $N'\geq N, t\in [0,1]$:
\begin{equation}\label{eq:defCD}
{\mathcal E}_{N'}(\mu_{t}) \geq \int \tau_{K,N'}^{(1-t)} (\sfd(x,y)) \rho_{0}^{-1/N'} 
+ \tau_{K,N'}^{(t)} (\sfd(x,y)) \rho_{1}^{-1/N'} \,\di \pi(x,y).
\end{equation}
\end{definition}

It is worth recalling that if $(M,g)$ is a Riemannian manifold of dimension $n$ and 
$h \in C^{2}(M)$ with $h > 0$, then the m.m.s. $(M,\sfd_{g},h \, {\rm Vol}_{g})$ (where $\sfd_{g}$ and ${\rm Vol}_{g}$ denote the Riemannian distance and volume induced by $g$) verifies $\CD(K,N)$ with $N\geq n$ if and only if  (see  \cite[Theorem 1.7]{Sturm06II})
$$
\Ric_{g,h,N} \geq  K g, \qquad \Ric_{g,h,N} : =  \Ric_{g} - (N-n) \frac{\nabla_{g}^{2} h^{\frac{1}{N-n}}}{h^{\frac{1}{N-n}}}.  
$$
In particular if $N = n$ the generalized Ricci tensor $\Ric_{g,h,N}= \Ric_{g}$ makes sense only if $h$ is constant.

The lack of the local-to-global property of the $\CD(K,N)$ condition (for $K/N \neq 0$)
led in 2010 Bacher and Sturm to introduce in \cite{BS10} the reduced curvature-dimension condition, denoted by $\CD^{*}(K,N)$.
The $\CD^{*}(K,N)$ condition asks for the same inequality \eqref{eq:defCD} of $\CD(K,N)$ to hold but
the coefficients $\tau_{K,N}^{(s)}(\sfd(\gamma_{0},\gamma_{1}))$ are replaced by the slightly smaller $\sigma_{K,N}^{(s)}(\sfd(\gamma_{0},\gamma_{1}))$.
\\

Since the $\CD$ condition allows Finsler geometries, in order to single out the ``Riemannian''  structures  Ambrosio-Gigli-Savar\'e \cite{AmbrosioGigliSavare14} introduced the Riemannian curvature dimension condition $\RCD(K,\infty)$ (see also \cite{AGMR12} for the extension to $\sigma$-finite measures and for the present simplification in the axiomatization).
A finite dimensional refinement, coupling the $\CD(K,N)$ condition for $N<+\infty$ with infinitesimal Hilbertianity, has been subsequently proposed in \cite{Gigli1} while the the refined
$\RCD^{*}(K,N)$ condition has been introduced and extensively investigated
in \cite{EKS,AmbrosioMondinoSavare}. We refer to these papers and references therein for a general account
on the synthetic formulation of the latter Riemannian-type Ricci curvature lower bounds; for a survey of results, see the Bourbaki seminar \cite{VilB} and the recent ICM-Proceeding \cite{AmbrosioICM}.
Here we only briefly recall that it is a stable \cite{AmbrosioGigliSavare14,GigliMondinoSavare13} strengthening of the reduced curvature-dimension condition:
a m.m.s. verifies $\RCD^{*}(K,N)$ if and only if it satisfies $\CD^{*}(K,N)$ and is infinitesimally Hilbertian, 
meaning that the Sobolev space $W^{1,2}(X,\meas)$ is a Hilbert space (with the Hilbert structure induced by the Cheeger energy).

To conclude we recall also that recently  Cavalletti and E. Milman \cite{CMi16} proved the equivalence
of $\CD(K,N)$ and $\CD^{*}(K,N)$, together with the local-to-global property for $\CD(K,N)$, in the framework of  essentially non-branching m.m.s. having $\meas(X) < \infty$.
As we will always assume the aforementioned properties to be satisfied by our ambient m.m.s. $(X,\sfd,\meas)$, we will use both formulations with no distinction.
It is worth also mentioning that a m.m.s. verifying $\RCD^{*}(K,N)$ is essentially non-branching (see \cite[Corollary 1.2]{RS2014})
implying also the equivalence of $\RCD^{*}(K,N)$ and  $\RCD(K,N)$.
\\

For all the main results we will assume that the m.m.s. $(X,\sfd,\meas)$ is essentially non-branching and satisfies $\CD(K,N)$ from some $K>0$
with $\supp(\meas) = X$ (or, more strongly, that $(X,\sfd,\meas)$ is a $\RCD(N-1,N)$ space).  It follows that $(X,\sfd)$ is a geodesic and compact metric space with $\meas(X)\in (0,\infty)$. Since $(X,\sfd,\meas)$ is a $\CD(K,N)$ (resp. $\RCD(K,N)$) space if and only if  $(X,\sfd, \alpha \meas)$ is so, without loss of generality we will also assume $\mm(X)=1$.

\subsection{Finite perimeter sets and L\'evy-Gromov isoperimetric inequality}\label{SS:FPSLG}

We now recall the definition  of a finite perimeter set in a metric measure space (see \cite{Am2,MirandaJr} and the more recent \cite{AmbrosioDiMarino}).

\begin{definition}[Perimeter and sets of finite perimeter]
Given a Borel set $E\subset X$ and an open set $A$ the perimeter $\Per(E,A)$ is defined in the following way:
\begin{equation*}
\Per(E,A):=\inf\left\lbrace \liminf_{n\to\infty}\int_A\abs{\nabla u_n}\di\meas: u_n\in\LIPloc(A), u_n\to \chi_E \quad \text{in } L^1_{\loc}(A)\right\rbrace .
\end{equation*}
We say that $E$ has finite perimeter if $\Per(E,X)<+\infty$; in this case, we shall denote $\Per(E):=\Per(E,X)$ to simplify the notation. It can be proved that the set function $A\mapsto\Per(E,A)$ is the restriction to open sets of a finite Borel measure $\Per(E,\cdot)$ defined by
\begin{equation*}
\Per(E,B):=\inf\left\lbrace \Per(E,A): B\subset A,\text{ } A \text{ open}\right\rbrace .
\end{equation*}
\end{definition}

Below we recall the definition of the family of one dimensional model spaces for the curvature dimension condition of parameters $K>0$ and $1<N<+\infty$ (cf.  \cite[Appendix C] {Gro} and \cite{Milman15}).

\begin{definition}[One dimensional model spaces]\label{def:onedimmodel}
For any $K>0$ and for any $1<N<+\infty$ we define the one dimensional model space $(I_{K,N},\dist_{eu}, \meas_{K,N})$ for the curvature dimension condition of parameters $K$ and $N$ by
\begin{equation}\label{eq:defonedim}
I_{K,N}:=\left(0,\pi\sqrt{\frac{N-1}{K}}\right),\quad \meas_{K,N}:=\frac{1}{c_{K,N}}\sin\left({t\sqrt{\frac{K}{N-1}}}\right)^{N-1}\leb^1\res I_{K,N},
\end{equation}
where $\dist$ is the restriction to $I_{K,N}$ of the canonical Euclidean distance over the real line and $c_{K,N}:=\int_{I_{K,N}} \sin\big({t\sqrt{\frac{K}{N-1}}}\big)^{N-1} \, \di \leb^1(t)$ is the normalizing constant.
\\In order to shorten the notation, we set $h_{K,N}(t):=\frac{1}{c_{K,N}}  \sin\left({t\sqrt{\frac{K}{N-1}}}\right)^{N-1}$ for all $t\in I_{K,N}$.
\end{definition}

Let us recall that, for any normalized metric measure space $(X,\dist,\meas)$, the isoperimetric profile $\mathcal{I}_{(X,\dist,\meas)}:[0,1]\to[0,+\infty)$ is defined by
\begin{equation*}
\mathcal{I}_{(X,\dist,\meas)}(v):=\inf\left\lbrace \Per(E):\, E\in\Borel (X),\quad\meas(E)=v \right\rbrace.
\end{equation*}
We will denote by $\mathcal{I}_{K,N}$ the isoperimetric profile of the model space $\left(I_{K,N},\dist_{eu},\meas_{K,N}\right)$.

In \cite{CavallettiMondino17,CavallettiMondino18}, exploiting the so-called localization technique (cf. \cite{klartag}), the following version of the L\'evy-Gromov isoperimetric inequality \cite[Appendix C] {Gro} for metric measure spaces was proven.

\begin{theorem}[L\'evy-Gromov inequality]\label{thm:LevyGromov}
Let $(X,\dist,\meas)$ be an essentially non branching $\CD(K,N)$ metric measure space for some $K>0$ and $1< N<+\infty$. Then, for any Borel set $E\subset X$, it holds
\begin{equation*}
\Per(E)\ge \mathcal{I}_{K,N}(\meas(E)).
\end{equation*}
\end{theorem}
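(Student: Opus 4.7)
The plan is to use the $L^1$-Optimal Transport localization (``needle decomposition'') technique, pioneered by Klartag in the smooth Riemannian setting and extended to essentially non-branching $\CD(K,N)$ spaces in work that constitutes the technical backbone of the authors' framework. The strategy reduces the multi-dimensional isoperimetric problem to a one-parameter family of one-dimensional weighted model problems along transport rays, where the sharp comparison with the model density $h_{K,N}$ is tractable.

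To set up the localization, fix $E\subset X$ with $v:=\meas(E)\in(0,1)$ (the extreme cases $v\in\{0,1\}$ being trivial) and consider the zero-mean function $f:=\chi_E - v$. Kantorovich duality for the $L^1$-transport of $f^+\meas$ to $f^-\meas$ produces a $1$-Lipschitz potential $u:X\to\R$, whose transport set $\mathcal T$ decomposes into maximal transport rays $\{X_\alpha\}_{\alpha\in Q}$. In the essentially non-branching $\CD(K,N)$ framework one obtains a disintegration $\meas\res\mathcal T = \int_Q \meas_\alpha\,\di \mathfrak{q}(\alpha)$, where each $\meas_\alpha$ is concentrated on $X_\alpha$, the one-dimensional density $h_\alpha$ of $\meas_\alpha$ satisfies the 1D $\CD(K,N)$ condition (a $K/(N-1)$-concavity of $h_\alpha^{1/(N-1)}$), and the balance identity $\meas_\alpha(E\cap X_\alpha) = v\,\meas_\alpha(X_\alpha)$ holds for $\mathfrak{q}$-a.e.\ $\alpha$.

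Next I would apply, on $\mathfrak{q}$-a.e.\ ray, the sharp one-dimensional isoperimetric comparison: a probability density on a real interval satisfying the 1D $\CD(K,N)$ condition has isoperimetric profile pointwise bounded below by $\mathcal I_{K,N}$, yielding
\begin{equation*}
\Per_{\meas_\alpha}(E\cap X_\alpha) \ge \meas_\alpha(X_\alpha)\, \mathcal I_{K,N}(v).
\end{equation*}
A slicing/coarea argument then relates the ambient perimeter to the integral of one-dimensional perimeters along rays,
\begin{equation*}
\Per(E) \ge \int_Q \Per_{\meas_\alpha}(E\cap X_\alpha)\,\di \mathfrak{q}(\alpha),
\end{equation*}
and since $\int_Q \meas_\alpha(X_\alpha)\,\di \mathfrak{q}(\alpha)\le \meas(X)=1$ (the non-transport set $\mathcal T^c$, on which $u$ is locally constant, contributes symmetrically to $E$ and $E^c$ and can be absorbed), we conclude $\Per(E)\ge \mathcal I_{K,N}(v)$.

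The main obstacle is the localization itself: producing the 1D $\CD(K,N)$ structure on each transport ray starting only from the essentially non-branching $\CD(K,N)$ hypothesis on $X$. This requires delicate measurable selection, $\meas$-negligibility of the branching subset of $\mathcal T$, and a rigorous passage from the curvature-dimension inequality on $X$ to the corresponding differential inequality for $h_\alpha$. A secondary difficulty lies in the slicing step, where Lipschitz approximations $u_n\to\chi_E$ in $L^1$ must be restricted to $\mathfrak{q}$-a.e.\ ray so as to yield effective $\BV$ approximations in one variable and legitimate a Fatou argument on $Q$.
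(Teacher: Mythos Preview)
The paper does not give its own proof of this statement: \autoref{thm:LevyGromov} is quoted from \cite{CavallettiMondino17,CavallettiMondino18} (see the sentence immediately preceding the theorem statement in the paper). Your sketch is precisely the strategy employed in those references, namely the $L^1$-optimal-transport localization of Klartag \cite{klartag} adapted to the essentially non-branching $\CD(K,N)$ setting, followed by the one-dimensional sharp isoperimetric comparison of E.~Milman on each needle. You have identified the correct ingredients and the genuine technical obstacles (the measurable disintegration with $\CD(K,N)$ needles and the slicing inequality for the perimeter), so there is no real gap in your plan; but be aware that in the Cavalletti--Mondino normalization the conditional measures $\meas_\alpha$ are probability measures and $\mathfrak{q}$ itself carries the mass, so the inequality $\int_Q \meas_\alpha(X_\alpha)\di\mathfrak{q}(\alpha)\le 1$ is in fact an equality by construction, and the handling of the non-transport set is cleaner than your parenthetical remark suggests.
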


\medskip

We next recall the notion of warped product between metric measure spaces, generalizing the well know Riemannian construction.
\\Given two geodesic metric measure spaces $(B,\dist_B,\meas_B)$ and $(F,\dist_F,\meas_F)$ and a Lipschitz function $f:B\to[0,+\infty)$ one can define a length structure on the product $B\times F$ as follows: for any absolutely continuous curve $\gamma:[0,1]\to B\times F$ with components $(\alpha,\beta)$, define
\begin{equation*}
L(\gamma):=\int_0^1\left(\abs{\alpha'}^2(t)+\left(f\circ\alpha(t)\right)^2\abs{\beta'}^2(t)\right)^{\frac{1}{2}}\di t
\end{equation*} 
and consider the associated pseudo-distance 
\begin{equation*}
\dist((p,x),(q,y)):=\inf\left\lbrace L(\gamma): \gamma(0)=(p,x),\text{ }\gamma(1)=(q,y) \right\rbrace. 
\end{equation*}
The $f$-warped product of $B$ with $F$ is the metric space defined by
\begin{equation*}
B\times_f F:=\left(B\times F/_{\sim},\dist\right),
\end{equation*}
where $(p,x)\sim (q,y)$ if and only if $\dist((p,x),(q,y))=0$. One can also associate a natural measure  and obtain
\begin{equation*}
B\times_f^NF:=\left(B\times_fF,\meas_C\right),\quad \meas_C:=f^N\meas_B\otimes \meas_F,
\end{equation*}
that we will call warped product metric measure space of $(B,\dist_B,\meas_B)$ and $(F,\dist_F,\meas_F)$.
\\

In \cite{CavallettiMondino17,CavallettiMondino18} also the rigidity problem for the L\'evy-Gromov inequality was addressed in the framework of metric measure spaces.  
Before stating the result from  \cite{CavallettiMondino17,CavallettiMondino18}, observe that if $(X,\dist,\meas)$ is an $\RCD(K,N)$ metric measure space for some $K>0$ and $1<N<+\infty$ then the rescaled space $(X,\dist',\meas)$, where $\dist':=\sqrt{\frac{N-1}{K}}\dist$, is an $\RCD(N-1,N)$ space.

\begin{theorem}[Rigidity in L\'evy-Gromov inequality]\label{thm:rigiditylevy}
Let $(X,\dist,\meas)$ be an $\RCD(N-1,N)$ metric measure space for some $N\in[2,+\infty)$ with $\meas(X)=1$. Assume that there exists $\bar{v}\in(0,1)$ such that $\mathcal{I}_{(X,\dist,\meas)}(\bar{v})=\mathcal{I}_{N-1,N}(\bar{v})$.
Then $(X,\dist,\meas)$ is a spherical suspension: there exists an $\RCD(N-2,N-1)$ m.m.s. $(Y,\dist_Y,\meas_y)$ with $\meas_Y(Y)=1$ such that $X$ is isomorphic as a metric measure space to $[0,\pi]\times_{\sin}^{N-1}Y$. Moreover in this case the following hold:
\begin{itemize}
\item[(i)] for any $v\in[0,1]$ we have $\mathcal{I}_{(X,\dist,\meas)}(v)=\mathcal{I}_{N-1,N}(v)$;
\item[(ii)] for any $v\in[0,1]$ there exists a Borel set $A\subset X$ with $\meas(A)=v$ and such that
\begin{equation*}
\Per(A)=\mathcal{I}_{(X,\dist,\meas)}(v)=\mathcal{I}_{N-1,N}(v);
\end{equation*}
\item[(iii)] for any Borel set $A\subset X$ such that $\meas(A)=v$, we have $\Per(A)=\mathcal{I}_{N-1,N}(v)$ if and only if
\begin{equation*}
\meas(A\setminus\{(t,y)\in[0,\pi]\times Y: t\in[0,r(v)]\})=0
\end{equation*}
or
\begin{equation*}
\meas(A\setminus\{(t,y)\in[0,\pi]\times Y: t\in[\pi-r(v),\pi]\})=0, 
\end{equation*}
where $r(v)\in(0,\pi)$ is such that $c_N\int_0^{r(v)}\sin^{N-1}(t)\di t=v$ (with $c_N$ normalization constant).
\end{itemize}
\end{theorem}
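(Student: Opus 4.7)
The strategy is to combine the one-dimensional localization (needle decomposition) underlying \autoref{thm:LevyGromov} with the maximal diameter theorem for $\RCD(N-1,N)$ spaces. Since $X$ is compact and $\RCD(N-1,N)$, standard compactness in $\BV$ together with lower semicontinuity of the perimeter yields a minimizer $E\subset X$ with $\meas(E)=\bar v$ and $\Per(E)=\mathcal{I}_{N-1,N}(\bar v)$.

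First I would apply the $L^1$-optimal-transport localization of \cite{CavallettiMondino17,CavallettiMondino18} to the signed function $f:=\chi_E-\bar v$, which is $\meas$-integrable and has zero mean. This produces a disintegration of $\meas$ along a family of geodesic needles $\{X_q\}_{q\in Q}$ with conditional probabilities $\meas_q = h_q\,\haus^1\res X_q$, where each $h_q$ is a $\CD(N-1,N)$ density on an interval of length at most $\pi$. Two consequences of this construction are crucial: (a) on $q$-a.e.\ needle $f$ still has zero mean, so that $\meas_q(E\cap X_q)=\bar v$; and (b) the perimeter splits as $\Per(E) = \int_Q \Per_q(E\cap X_q)\,\di q$, where $\Per_q$ denotes the 1D perimeter in $(X_q,h_q\haus^1)$.

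Second, the one-dimensional L\'evy-Gromov inequality for $\CD(N-1,N)$ densities on intervals yields $\Per_q(E\cap X_q)\geq \mathcal{I}_{N-1,N}(\bar v)$ for $q$-a.e.\ needle. Integrating over $Q$ and comparing with the equality $\Per(E)=\mathcal{I}_{N-1,N}(\bar v)$ forces equality in the 1D inequality on a.e.\ needle. A direct ODE analysis of extremal $\CD(N-1,N)$ densities on an interval (as done in \cite{CavallettiMondino17,CavallettiMondino18}) then shows that $q$-a.e.\ needle has length exactly $\pi$ and density $h_q(t)=c_N\sin^{N-1}(t)$; in particular $\mathrm{diam}(X,\dist)=\pi$. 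By Ketterer's maximal diameter theorem for $\RCD(N-1,N)$ spaces, this forces $(X,\dist,\meas)$ to be isomorphic to a spherical suspension $[0,\pi]\times_{\sin}^{N-1}Y$ over some $\RCD(N-2,N-1)$ space $(Y,\dist_Y,\meas_Y)$ with $\meas_Y(Y)=1$.

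Once the warped product structure is available, (i) follows by running the same needle argument at any $v\in[0,1]$: since the needle structure is rigid, each isoperimetric problem saturates the model bound. For (ii) one takes $A=[0,r(v)]\times Y$ and computes $\Per(A)$ via the coarea formula on the warped product, obtaining exactly $\mathcal{I}_{N-1,N}(v)$. For (iii) one combines (ii) with the 1D rigidity on each needle: any minimizer intersected with a $\meas$-typical needle must be an interval sharing an endpoint with one of the poles, and the disintegration glues these intervals into one of the two ``polar caps''. The main obstacle is the sharp analysis of the 1D rigidity in step three, namely extracting from the integrated equality both the maximal length and the exact model density on each needle; this is the step where the $\RCD$ hypothesis (rather than merely $\CD$) becomes essential, since only then does the maximal diameter information upgrade to the spherical suspension structure.
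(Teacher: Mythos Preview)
The paper does not prove this theorem: it is stated in the preliminaries (Section~\ref{SS:FPSLG}) as a result quoted from \cite{CavallettiMondino17,CavallettiMondino18}, without any proof or sketch. So there is no ``paper's own proof'' to compare against.

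That said, your proposal is a faithful outline of the argument in those references: localize via $L^1$-optimal transport applied to $\chi_E-\bar v$, use the one-dimensional isoperimetric inequality on each needle, deduce from equality that almost every needle has length $\pi$ with model density, invoke Ketterer's maximal diameter theorem to obtain the spherical suspension, and then read off (i)--(iii) from the rigid needle structure. One point to be careful about is your claim (b) that the perimeter \emph{splits} along the disintegration: what is actually proved in \cite{CavallettiMondino18} is the inequality $\Per(E)\ge\int_Q \Per_q(E\cap X_q)\,\di q$, which is all you need here, but establishing even this requires a nontrivial approximation argument relating the metric perimeter to the one-dimensional perimeters along rays. Otherwise the sketch is sound and matches the cited sources.
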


\subsection{$\BV$ functions and coarea formula in m.m.s.}

As for the classical Euclidean case, in metric measure spaces one can introduce not only a notion of finite perimeter set but also a notion of function of bounded variation. We refer again to \cite{MirandaJr} and \cite{AmbrosioDiMarino} for more details about the topic.

\begin{definition}
A function $f\in L^1(X,\meas)$ is said to belong to the space $\BV_*(X,\dist,\meas)$ if there exists a sequence of locally Lipschitz functions $f_i$ converging to $f$ in $L^1(X,\meas)$ such that
\begin{equation*}
\limsup_{i\to\infty}\int_X\abs{\nabla f_i}\di \meas<+\infty.
\end{equation*}  
By localizing this construction one can define 
\begin{equation*}
\abs{Df}_*(A):=\inf\left\lbrace \liminf_{i\to\infty}\int_A\abs{\nabla f_i}\di \meas: f_i\in\LIPloc(A), f_i\to f \text{ in } L^1(A)\right\rbrace  
\end{equation*}
for any open $A\subset X$. In \cite{AmbrosioDiMarino} it is proven that this set function is the restriction to open sets of a finite Borel measure that we call \emph{total variation of $f$}.
\end{definition}
For any Lipschitz function $f:X\to\setR$ it is easy to check that $f\in \BV_{*}(X,\dist,\meas)$ and $\abs{Df}_*\le\abs{\nabla f}\meas$. In the following we will denote by $\abs{\nabla f}_1$ the density of $\abs{Df}_{*}$ with respect to  $\meas$.
With a slight abuse of notation motivated by simplicity, we are going to use the same symbol  $\abs{\nabla f}_1$ to denote the equivalence class (under $\mm$-a.e. equality) and  a Borel representative.
\\ The following result is a simplified version of \cite[Proposition 4.2]{AmbrosioPinamontiSpeight}.

\begin{proposition}\label{prop:localityof1}
Let $f\in\LIP(X)$. Then $\abs{\nabla f}_1(x)=0$ for $\meas$-a.e. $x\in\{f=0\}$.
\end{proposition}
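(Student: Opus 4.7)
The plan is to localize via a truncation that annihilates $f$ in a neighborhood of its zero set. For $\delta>0$ set
\begin{equation*}
g_\delta(x):=\max(f(x)-\delta,0)+\min(f(x)+\delta,0).
\end{equation*}
The one-dimensional soft-threshold $t\mapsto \max(t-\delta,0)+\min(t+\delta,0)$ is $1$-Lipschitz (it equals $t-\delta$, $0$, $t+\delta$ on $\{t\ge\delta\}$, $\{\abs{t}\le\delta\}$, $\{t\le-\delta\}$ respectively), so $g_\delta\in\LIP(X)$ with $\Lip(g_\delta)\le\Lip(f)$. Moreover $g_\delta\equiv 0$ on the open set $U_\delta:=\{\abs{f}<\delta\}$, hence $\abs{\nabla g_\delta}\equiv 0$ on $U_\delta$ by the local nature of the slope. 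Since $\|g_\delta-f\|_\infty\le\delta$ we have $g_\delta\to f$ uniformly, and since $\meas(X)<\infty$ the convergence also holds in $L^1(A)$ for every open $A\subset X$.

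Next, I would plug the sequence $\{g_{1/i}\}_{i\in\setN}$ into the definitional infimum for $\abs{Df}_*(A)$ on an arbitrary open set $A\subset X$ to obtain
\begin{equation*}
\abs{Df}_*(A)\le\liminf_{i\to\infty}\int_A\abs{\nabla g_{1/i}}\di\meas\le\Lip(f)\lim_{i\to\infty}\meas\left(A\cap\{\abs{f}\ge 1/i\}\right)=\Lip(f)\,\meas\left(A\setminus\{f=0\}\right),
\end{equation*}
where the last equality uses monotone convergence applied to $\{\abs{f}\ge 1/i\}\nearrow\{f\ne 0\}$. Outer regularity of the finite Borel measure $\meas$ on the complete separable metric space $X$ then allows us to pass to arbitrary Borel subsets of $\{f=0\}$: given Borel $B\subset\{f=0\}$ and $\eps>0$, choose open $A\supset B$ with $\meas(A\setminus B)<\eps$; since $A\setminus\{f=0\}\subset A\setminus B$, this yields $\abs{Df}_*(B)\le\abs{Df}_*(A)\le\Lip(f)\eps$. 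Sending $\eps\to 0$ gives $\abs{Df}_*(\{f=0\})=0$.

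Finally, since $f\in\LIP(X)$ satisfies $\abs{Df}_*\le\Lip(f)\,\meas$ (by testing the definition with the constant sequence $f_i\equiv f$), the measure $\abs{Df}_*$ is absolutely continuous with respect to $\meas$ with density $\abs{\nabla f}_1$, so the vanishing $\abs{Df}_*(\{f=0\})=0$ translates exactly into $\int_{\{f=0\}}\abs{\nabla f}_1\di\meas=0$, forcing $\abs{\nabla f}_1=0$ $\meas$-a.e.\ on $\{f=0\}$. The only nontrivial ingredient is the locality of the slope on $U_\delta$, which follows from its definition as a local $\limsup$ of difference quotients: if $g_\delta$ vanishes on a ball around $x$, then that $\limsup$ is zero. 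I do not anticipate any obstacle beyond this, since every other step is measure-theoretic bookkeeping and a direct application of the definition of $\abs{Df}_*$.
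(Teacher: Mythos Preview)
Your argument is correct. The soft-threshold truncation $g_\delta$ is an admissible competitor in the variational definition of $\abs{Df}_*(A)$ for every open $A$, and the chain of inequalities together with outer regularity of $\meas$ yields $\abs{Df}_*(\{f=0\})=0$; the conclusion then follows from $\abs{Df}_*=\abs{\nabla f}_1\meas$.

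Regarding comparison with the paper: the paper does not give a proof of this proposition at all, but simply records it as a simplified version of \cite[Proposition~4.2]{AmbrosioPinamontiSpeight}. Your proof is therefore more than what the paper provides---it is a short, self-contained argument that uses only the definitions introduced in the paper (the variational definition of $\abs{Df}_*$ and its extension to a Borel measure), with no need for the machinery of \cite{AmbrosioPinamontiSpeight}. The cited result in \cite{AmbrosioPinamontiSpeight} is in fact considerably stronger: under doubling and Poincar\'e assumptions it gives a pointwise Leibniz-type formula for $\abs{\nabla\cdot}_1$, of which the locality on level sets is a corollary. Your approach trades that generality for simplicity and works on any complete separable metric measure space with $\meas(X)<\infty$, without any structural hypotheses.
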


Let us point out that on general metric measure spaces there is no reason to expect any identification, in the almost everywhere sense, between $\abs{\nabla f}_1$, $\abs{\nabla f}$ and the $p$-minimal weak upper gradient $\abs{\nabla f}_w$, for some $p>1$, of a Lipschitz function $f$.\\
The identification result stated below is a consequence of the seminal work \cite{Cheeger} concerning Lipschitz functions on metric measure spaces satisfying doubling and Poincar\'e inequalities and of the identification result for $p$-minimal weak upper gradients obtained in \cite{GigliHan14} for proper $\RCD(K,\infty)$ spaces. In particular in \cite[Remark 3.5]{GigliHan14} it is observed that, for any Lipschitz function $f$, the density of $\abs{Df}_{*}$ with respect to $\meas$ coincides with the $p$-minimal weak upper gradient for any $p>1$. In \cite{Cheeger} instead it is proved that the slope coincides with the $p$-minimal weak upper gradient $\meas$-almost everywhere. Combining these two ingredients we get:

\begin{proposition}\label{prop:identification}
Let $(X,\dist,\meas)$ be an $\RCD(K,N)$ space, for some $K\in \R, N\in (1,\infty)$. Then for any $f\in\LIP(X)$ one has that $\abs{Df}_*=\abs{\nabla f}\meas$.
\end{proposition}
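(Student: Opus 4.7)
The plan is to establish the identification as a direct combination of two already available results: Cheeger's identification of the slope with the minimal $p$-weak upper gradient on doubling-Poincaré spaces, and the identification of the density of the total variation with the minimal $1$-weak upper gradient (which in the $\RCD$ setting matches any $p$-minimal weak upper gradient) due to Gigli-Han. The proposition is essentially a bridge between \textbf{BV theory} and \textbf{Sobolev theory} for Lipschitz functions in the $\RCD$ setting.

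First, I would verify the structural hypotheses needed to invoke Cheeger's theorem. An $\RCD(K,N)$ space with $K\in\R$ and $N\in(1,\infty)$ is in particular an essentially non-branching $\CD(K,N)$ space, hence by Bishop-Gromov volume comparison it is locally uniformly doubling, and by the work of Rajala it supports a local $(1,1)$-Poincaré inequality. Consequently, Cheeger's theorem (as formulated in \cite{Cheeger}, and in the form applicable to general Lipschitz functions on doubling-Poincaré spaces) gives
\begin{equation*}
\abs{\nabla f}_w = \abs{\nabla f} \qquad \meas\text{-a.e. on } X,
\end{equation*}
for every $f\in\LIP(X)$ and every $p\in(1,\infty)$, where $\abs{\nabla f}_w$ denotes the $p$-minimal weak upper gradient.

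Second, I would invoke the result of Gigli-Han (specifically \cite[Remark 3.5]{GigliHan14}), valid on proper $\RCD(K,\infty)$ spaces, which shows that for any Lipschitz $f$ the density $\abs{\nabla f}_1$ of $\abs{Df}_*$ with respect to $\meas$ coincides $\meas$-a.e. with the $p$-minimal weak upper gradient $\abs{\nabla f}_w$, for any $p>1$. Since our $\RCD(K,N)$ space is compact (in particular proper) and is also $\RCD(K,\infty)$, this applies. Chaining the two identifications
\begin{equation*}
\abs{\nabla f}_1 \;=\; \abs{\nabla f}_w \;=\; \abs{\nabla f} \qquad \meas\text{-a.e.,}
\end{equation*}
and recalling that $\abs{Df}_*$ has no singular part with respect to $\meas$ for Lipschitz $f$ (since $\abs{Df}_* \le \abs{\nabla f}\meas$ automatically), the identity $\abs{Df}_* = \abs{\nabla f}\meas$ follows.

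The only real subtlety in this plan is bookkeeping which identification holds in which generality: Cheeger's theorem requires the doubling/Poincaré structure and returns an $\meas$-a.e. pointwise identity of the slope with the minimal $p$-weak upper gradient, whereas Gigli-Han's identification uses the linearity of the heat flow (infinitesimal Hilbertianity) plus properness, which is why the statement of the proposition is confined to the $\RCD$ setting rather than the larger $\CD$ class. No genuinely new estimate is required: the work is in verifying that the two black-box results apply simultaneously, and in noting that compactness of $(X,\dist)$ handles the properness hypothesis trivially.
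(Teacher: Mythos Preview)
Your proposal is correct and matches the paper's own argument essentially verbatim: the paper likewise derives the identification by chaining Cheeger's slope\,=\,$p$-minimal weak upper gradient result (via doubling and Poincar\'e) with \cite[Remark~3.5]{GigliHan14} on proper $\RCD(K,\infty)$ spaces. One small quibble: for general $K\in\setR$ the space need not be compact, so you should justify properness via local compactness (Bishop--Gromov) plus completeness of the geodesic space, rather than asserting compactness.
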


The following coarea formula for functions of bounded variation on metric measure spaces is taken from \cite[Remark 4.3]{MirandaJr}. It will play a key role in the rest of the paper.

\begin{theorem}[Coarea formula]\label{thm:coarea}
Let $\Omega\subset X$ be an open domain in a m.m.s. $(X,\dist,\meas)$.
\\Let $v:\Omega\to[0,+\infty)$ be a continuous function belonging to $\BV_*(\Omega,\dist,\meas)$.
Then for any Borel function $f:\Omega\to[0,+\infty)$ it holds 
\begin{equation}\label{eq:coarea}
\int_{\{s\le v<t\}} f\di\abs{Dv}_*=\int_s^{t}\left(\int f\di\Per(\{v>r\})\right)\di r, \quad \forall s\in [0,t].
\end{equation}
\end{theorem}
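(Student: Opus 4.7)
The plan is to reduce \eqref{eq:coarea} to the ``global'' BV coarea identity of Miranda,
\[
\abs{Dv}_*(B) \;=\; \int_0^{+\infty} \Per(\{v>r\}, B)\, \di r \qquad \text{for every } B \in \Borel(\Omega),
\]
viewed as an equality between two finite Borel measures (in the variable $B$), and then to extract \eqref{eq:coarea} from it by a monotone class argument that exploits the continuity of $v$.

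To establish the measure identity I would first pick, from the very definition of $\BV_*$, a sequence $v_n \in \LIPloc(\Omega)$ with $v_n \to v$ in $L^1_\loc$ and $\int \abs{\nabla v_n}\, \di \meas \to \abs{Dv}_*(\Omega)$. For each Lipschitz $v_n$ the one-sided Lipschitz coarea inequality
\[
\int \abs{\nabla v_n}\, \di \meas \;\ge\; \int_0^{+\infty} \Per(\{v_n>r\}, \Omega)\, \di r
\]
is obtained by approximating $\chi_{\{v_n>r\}}$ in $L^1$ by $\min(\epsilon^{-1}(v_n-r)_+, 1)$, estimating the perimeter of each superlevel set from above by the $\epsilon$-slope, and letting $\epsilon \downarrow 0$. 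Passing to the limit in $n$ via $L^1$-lower semicontinuity of the perimeter and Fatou then yields the global inequality $\abs{Dv}_*(\Omega) \ge \int_0^{+\infty} \Per(\{v>r\}, \Omega)\, \di r$. The reverse inequality, together with the localization to Borel sets, is obtained by a Cavalieri argument: approximate $v$ from below by the step functions $v^{(k)} := 2^{-k}\sum_{j \ge 1} \chi_{\{v > j\cdot 2^{-k}\}}$, use subadditivity of the total variation to estimate $\abs{Dv^{(k)}}_*(B)$ in terms of the perimeters of the dyadic superlevel sets, invoke Fubini on the jointly measurable map $(r, B) \mapsto \Per(\{v>r\}, B)$, and let $k \to \infty$. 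The two inequalities upgrade to an identity of Borel measures in $B$ by uniqueness of the Radon--Nikodym decomposition.

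With the measure identity in hand, a monotone class argument reduces the proof of \eqref{eq:coarea} to the case $f = \chi_B$ for $B \in \Borel(\Omega)$, namely to
\[
\abs{Dv}_*(B \cap \{s \le v < t\}) \;=\; \int_s^t \Per(\{v>r\}, B)\, \di r.
\]
Here the continuity of $v$ is crucial: for every $r$ the set $\{v>r\}$ is open with topological boundary contained in $\{v = r\}$, and the perimeter measure $\Per(\{v>r\}, \cdot)$ is concentrated on this boundary (as a consequence of the approximation of $\chi_{\{v>r\}}$ by suitable truncations of $v$). Hence for $r \in (s,t)$ one has $\Per(\{v>r\}, B \cap \{s \le v < t\}) = \Per(\{v>r\}, B)$, while for $r \notin [s,t)$ the relative perimeter vanishes because $\{v = r\}$ is disjoint from $\{s \le v < t\}$. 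Substituting these two observations into the measure-theoretic coarea identity applied to $B \cap \{s \le v < t\}$ gives the claim; the case $s = t$ is trivial.

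The genuinely delicate step is the ``$\le$'' half of the measure-theoretic coarea identity: producing, for an arbitrary open $A \subset \Omega$, locally Lipschitz approximants of $v$ whose slope integrals are bounded above by $\int \Per(\{v>r\}, A)\, \di r$. In the absence of a canonical mollification on a general m.m.s.\ one cannot directly smooth $v$, and one must instead assemble Lipschitz approximants level by level, exploiting that each superlevel set $\{v > r\}$ is approximable in $L^1$ by Lipschitz functions with slope integrals close to $\Per(\{v>r\}, A)$, and then gluing them via the Cavalieri representation $v = \int_0^{+\infty} \chi_{\{v>r\}}\,\di r$. This gluing, together with the fact that the perimeter measure of each superlevel set is supported on the corresponding level set (needed to handle the restriction to $\{s \le v < t\}$), is the technical heart of the argument.
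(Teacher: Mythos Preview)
The paper does not prove this statement; it is quoted from \cite[Remark~4.3]{MirandaJr} and used as a black box throughout. Your proposal thus goes beyond the paper, essentially sketching the argument that underlies the cited reference.

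The outline is sound. The two inequalities yielding the measure identity $\abs{Dv}_*(B)=\int_0^\infty \Per(\{v>r\},B)\,\di r$ are the standard ones, and your extraction of \eqref{eq:coarea} from it---reducing to $f=\chi_B$ by a monotone class argument and then using that $\Per(\{v>r\},\cdot)$ is concentrated on the topological boundary $\partial\{v>r\}\subset\{v=r\}$ (this is exactly where continuity of $v$ enters)---is correct. Two small technical caveats on the ``$\le$'' half: first, the dyadic Riemann sums $\sum_j 2^{-k}\Per(\{v>j2^{-k}\})$ need not converge to the integral for a merely integrable function $r\mapsto\Per(\{v>r\})$, so one should average the partition over a shift $\tau\in[0,2^{-k})$ and use Fubini to select a good $\tau$; second, the step functions $v^{(k)}$ are not locally Lipschitz, so one must first extend the $L^1$-lower semicontinuity of $\abs{D\cdot}_*$ from Lipschitz to $\BV_*$ approximants via a diagonal argument. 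These are details rather than gaps, and you correctly flag the level-by-level gluing of Lipschitz approximants as the substantive technical point, which is indeed where the work lies in \cite{MirandaJr}.
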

Combining \autoref{prop:identification} and \autoref{thm:coarea} we obtain the following.

\begin{corollary}\label{cor:coareaRCD}
Let $(X,\dist,\meas)$ be an $\RCD(K,N)$ space, for some $K\in \R, N\in (1,\infty)$.
\\ Let $\Omega\subset X$ be an open domain and $v:\Omega\to[0,+\infty)$ be Lipschitz. Then, for any Borel function $f:\Omega\to[0,+\infty)$ it holds that
\begin{equation}\label{eq:coarea1}
\int_{\{s\le v<t\}} f\abs{\nabla v}\di \meas=\int_s^{t}\left(\int f\di\Per(\{v>r\})\right)\di r,  \quad \forall s\in [0,t].
\end{equation}
\end{corollary}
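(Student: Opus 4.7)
The plan is to deduce the identity by plugging \autoref{prop:identification} into \autoref{thm:coarea}; both ingredients are already available, and the corollary is essentially a substitution once the input is set up correctly.

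First I would verify that $v$ is an admissible input for the BV coarea formula. Since $v$ is Lipschitz on $\Omega$, it is in particular continuous. Moreover, taking the constant sequence $v_i\equiv v$ in the relaxed definition of $\abs{Dv}_*$ and using that $\abs{\nabla v}\le\Lip(v)$, one sees that $v\in\BV_*(\Omega,\dist,\meas)$ with $\abs{Dv}_*\le\Lip(v)\,\meas\restr\Omega$. In particular the measure $\abs{Dv}_*$ is finite on $\Omega$ and the hypotheses of \autoref{thm:coarea} are met.

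Next I would use \autoref{prop:identification} to identify $\abs{Dv}_*$ with $\abs{\nabla v}\meas$ on $\Omega$. The proposition is stated for a globally Lipschitz function on $X$, so I would first extend $v$ to some $\tilde v\in\LIP(X)$ via McShane's extension (metric-space version) and apply the proposition to $\tilde v$, obtaining $\abs{D\tilde v}_*=\abs{\nabla \tilde v}\meas$ on $X$. Since $v=\tilde v$ on the open set $\Omega$, the locality of the slope and of the total variation (both are defined through a local relaxation procedure, so their restrictions to an open set depend only on the values of the function on that set) yield
\begin{equation*}
\abs{Dv}_*\restr \Omega=\abs{\nabla v}\,\meas\restr \Omega.
\end{equation*}

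Finally I would apply \autoref{thm:coarea} to $v$ and to the Borel function $f$, which gives
\begin{equation*}
\int_{\{s\le v<t\}} f \di \abs{Dv}_* = \int_s^{t}\left(\int f\di\Per(\{v>r\})\right) \di r,
\end{equation*}
and substituting $\di\abs{Dv}_*=\abs{\nabla v}\di\meas$ on $\Omega$ turns the left-hand side into $\int_{\{s\le v<t\}} f\abs{\nabla v}\di\meas$, which is exactly \eqref{eq:coarea1}. The only point that requires any care is the localisation of \autoref{prop:identification} to the open set $\Omega$, but this is a routine consequence of the locality built into both the definition of $\abs{D\cdot}_*$ (via locally Lipschitz approximating sequences) and of the slope, so no genuine obstacle arises.
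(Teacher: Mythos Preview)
Your proposal is correct and follows exactly the approach the paper indicates: the corollary is obtained simply by combining \autoref{prop:identification} with \autoref{thm:coarea}. You have additionally filled in the routine technicalities (the BV admissibility of $v$, the McShane extension to apply \autoref{prop:identification} globally, and the locality of slope and total variation on the open set $\Omega$) that the paper leaves implicit.
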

The following result will be useful when dealing with the almost rigidity case in the spectral gap inequality. We refer to \cite[Chapter 27]{Vil09} (see also \cite[Section 3.5]{GigliMondinoSavare13}) for the relevant background about measured Gromov-Hausdorff convergence.

\begin{proposition}\label{prop:lscprofiles}
Fix  $K>0$ and $N\in (1,\infty)$. Let $\left((X_n,\dist_n,\meas_n)\right)_{n\in\setN}$ be a sequence of normalized $\RCD(K,N)$ spaces converging to $(X,\dist,\meas)$  in the measured Gromov-Hausdorff sense. 
\\ Denote by $\mathcal{I}_n$ (resp. $\mathcal{I}$) the isoperimetric profile of $(X_n,\dist_n,\meas_n)$ (resp. of  $(X,\dist,\meas)$). 
 \\Then, for any $t\in [0,1]$ and for any sequence $(t_n)_{n\in\setN}$ with $t_n\to t$, it holds that
\begin{equation}\label{eq:lscprofile}
\mathcal{I}(t)\le\liminf_{n\to\infty}\mathcal{I}_n(t_n).
\end{equation}
\end{proposition}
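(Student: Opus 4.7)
The plan is a standard compactness plus lower semicontinuity argument. By definition of $\mathcal{I}_n$, for each $n$ we can choose a Borel set $E_n \subset X_n$ with $\meas_n(E_n) = t_n$ and
\[
\Per(E_n) \le \mathcal{I}_n(t_n) + \frac{1}{n}.
\]
Passing to a subsequence, I may assume $\mathcal{I}_n(t_n) \to L := \liminf_{n\to\infty} \mathcal{I}_n(t_n)$, and further that $L < +\infty$ (otherwise there is nothing to prove). The goal is then to produce a Borel set $E \subset X$ with $\meas(E) = t$ and $\Per(E) \le L$; the conclusion $\mathcal{I}(t) \le \Per(E) \le L$ then follows immediately from the definition of the isoperimetric profile.

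To construct such a limit set, realize the mGH convergence by isometrically embedding all the $(X_n, \dist_n)$ and $(X, \dist)$ into a common compact metric space $(Z, \dist_Z)$ so that $X_n \to X$ in the Hausdorff sense and $\meas_n \weakto \meas$ in duality with $C(Z)$. Set $\mu_n := \chi_{E_n} \meas_n$, regarded as a finite Borel measure on $Z$. Since $\mu_n \le \meas_n$ and $\mu_n(Z) = t_n \to t$, the sequence $(\mu_n)$ is tight and by Prokhorov, up to extracting, $\mu_n \weakto \mu$ for some finite Borel measure $\mu$ on $Z$ with $\mu \le \meas$ and $\mu(Z) = t$. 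The uniform perimeter bound $\Per(E_n) \le L + 1$, together with the uniform doubling and $(1,1)$-Poincaré inequalities available along the sequence (consequences of $\RCD(K,N)$ with $K > 0$ and the Bishop--Gromov inequality), then activates the stability theory for sets of finite perimeter under mGH convergence in the $\RCD(K,N)$ class: the weak limit $\mu$ is necessarily of the form $\chi_E \meas$ for some Borel set $E \subset X$, and moreover
\[
\Per(E) \le \liminf_{n\to\infty} \Per(E_n).
\]
Since $\meas(E) = \mu(Z) = t$, chaining the inequalities yields
\[
\mathcal{I}(t) \le \Per(E) \le \liminf_{n\to\infty} \Per(E_n) \le \liminf_{n\to\infty} \mathcal{I}_n(t_n),
\]
as required.

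The main obstacle is precisely the stability step invoked above: the compactness of indicator functions with uniformly bounded perimeter (so that the weak limit is again an indicator) and the lower semicontinuity of the perimeter under mGH convergence in the $\RCD(K,N)$ setting. This is by now classical in the theory of $\BV$ functions on metric measure spaces with a synthetic lower Ricci bound, and is typically proved by approximating each $\chi_{E_n}$ with Lipschitz functions (e.g.\ via heat flow mollification), exploiting the uniform doubling and Poincaré inequalities, and then running a diagonal argument. Once this stability package is treated as a black box, everything else in the proof is formal.
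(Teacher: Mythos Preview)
Your proof is correct and follows essentially the same approach as the paper: pick (near-)minimizers $E_n$, then invoke compactness and lower semicontinuity of the perimeter under mGH convergence of $\RCD(K,N)$ spaces to extract a limit set $E$ with $\meas(E)=t$ and $\Per(E)\le\liminf_n\Per_n(E_n)$. The paper makes your black box precise by citing \cite{AmbrosioHonda} (Proposition~7.5 for strong $L^1$-compactness of the sequence $(\chi_{E_n})$, Theorem~8.1 for Mosco convergence of the BV energies), and verifies that the strong $L^1$ limit $f$ is again an indicator by observing that $\chi_{E_n}(1-\chi_{E_n})=0$ strongly $L^1$-converges to $f(1-f)$.
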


\begin{proof}
We refer to \cite{GigliMondinoSavare13,AmbrosioHonda} for the basic definitions and statements about convergence of functions defined over mGH-converging sequences of metric measure spaces.
\\First of all note that in order to prove \eqref{eq:lscprofile}, without loss of generality we can assume that $\sup_{n\in\setN}\mathcal{I}_n(t_n)<+\infty$.
\\For any $n\in\setN$ let $E_n\subset X_n$ be a Borel set such that $\Per_n(E_n)=\mathcal{I}_n(t_n)$, whose  existence  follows as in the Euclidean case from standard lower semicontinuity and compactness arguments.  
\\The sequence of the corresponding characteristic functions $(\chi_{E_n})_{n\in\setN}$ satisfies the assumption of \cite[Proposition 7.5]{AmbrosioHonda}, i.e.
\begin{equation*}
\sup_{n\in\setN}\left\lbrace\norm{\chi_{E_n}}_{L^1(\meas_n)}+\abs{D\chi_{E_n}}(X_n) \right\rbrace=\sup_{n\in\setN}\left\lbrace t_n+\mathcal{I}_n(t_n) \right\rbrace<+\infty.  
\end{equation*}
It follows from \cite[Proposition 7.5]{AmbrosioHonda} that, up to extracting a subsequence which we do not relabel, $\left(\chi_{E_n}\right)_{n\in\setN}$ strongly $L^{1}$-converges  to a function $f\in L^1(X,\meas)$ (see \cite[Section 3]{AmbrosioHonda}). In particular we can say that
\begin{equation}
\norm{f}_{L^1(\meas)}=\lim_{n\to\infty}\norm{\chi_{E_n}}_{L^1(\meas_n)}=\lim_{n\to\infty}t_n=t.
\end{equation}
We now claim that $f$ is the indicator function of a Borel set $E\subset X$, with $\meas(E)=t$. To this aim call $g_n:=\chi_{E_n}(1-\chi_{E_n})$ and observe that $(g_n)_{n\in \setN}$ strongly $L^1$-converges to $g:=f(1-f)$ thanks to \cite[Proposition 3.3]{AmbrosioHonda}. Thus $g=0$, since $g_n=0$ for any $n\in\setN$ and therefore $g$ is the indicator function of a Borel set, as claimed. 
\\We can now apply  \cite[Theorem 8.1]{AmbrosioHonda} to get  the Mosco convergence of the $\BV$ energies and   conclude that
\begin{equation*}
\Per(E)\le\liminf_{n\to\infty}\Per_n(E_n)=\liminf_{n\to\infty}\mathcal{I}_n(t_n).
\end{equation*}
The lower semicontinuity for the isoperimetric profiles \eqref{eq:lscprofile} easily follows, since $E$ is an admissible competitor in the definition of $\mathcal{I}(t)$.
\end{proof}

\section{Polya-Szego inequality}\label{sec:polyaszego}

The working assumption of this section, unless otherwise stated, is that $(X,\dist,\meas)$ is an essentially non branching $\CD(K,N)$ space for some $K>0$, $N\in (1,+\infty)$, with $\meas(X)=1$ and $\supp(\meas)=X$. 

\begin{definition}[Distribution function]\label{def:distributionfunction}
Given an open domain $\Omega\subset X$ and a non-negative Borel function $u:\Omega\to[0,+\infty)$ we define its distribution function $\mu:[0,+\infty)\to[0,\meas(\Omega)]$ by 
\begin{equation}\label{eq:defdistr}
\mu(t):=\meas(\{u> t\}).
\end{equation}

\end{definition}

\begin{remark}
Suppose that $u$ is such that $\meas(\{u=t\})=0$ for any $0<t<+\infty$. Then it makes no difference to consider closed superlevel sets or open superlevel sets in \eqref{eq:defdistr}.
\end{remark}

It is not difficult to check that the distribution function $\mu$ is non increasing and left-continuous. If moreover $u$ is continuous, then  $\mu$ is strictly decreasing.
We let $u^{\#}$ be the generalized inverse of $\mu$, defined in the following way:
\begin{equation*}
u^{\#}(s):=
\begin{cases*}
{\ess}\sup u & \text{if $s=0$},\\
\inf\left\lbrace t:\mu(t)<s \right\rbrace &\text{if $s>0$}. 

\end{cases*}
\end{equation*}

\begin{definition}[Rearrangement on one dimensional model spaces] 
Fix any $K>0$, $1<N<+\infty$, and let  $\left(I_{K,N},\dist,\meas_{K,N}\right)$ be the one-dimensional model space defined in \eqref{eq:defonedim}.  Let $\Omega\subset X$ be an open subset and consider  $[0,r]\subset I_{K,N}$ such that $\mm_{K,N}([0,r])=\mm(\Omega)$.
\\ For any  Borel function $u:\Omega\to [0,+\infty)$, the \emph{monotone rearrangement} $u^*_{K,N}:[0,r]\to \R^{+}$  is defined by
\begin{equation*}
u^*_{K,N}(x):=u^{\#}(\meas_{K,N}([0,x])), \quad \forall x\in [0,r].
\end{equation*}
For simplicity of notation we will often write $u^{*}$ in place of $u^*_{K,N}$.
\end{definition}

\begin{remark}\label{rem:RearSignChang}
For simplicity of notation, throughout the paper we will consider monotone rearrangements of  \emph{non-negative} functions. Nevertheless, for an arbitrary Borel function $u:\Omega\to(-\infty+\infty)$ the analogous statements hold by setting $u^*$ the monotone rearrangement of $\abs{u}$. 
\end{remark}

In the next proposition we collect some  useful properties of the monotone rearrangement,  whose proof in the Euclidean setting can be found for instance in \cite[Chapter 1]{Kesavan} and can be adapted with minor modifications to our framework.

\begin{proposition}\label{prop:equimeasurability}
Let $(X,\dist,\meas)$ with $\mm(X)=1$ be an essentially non branching $\CD(K,N)$ space for some $K>0$, $N\in(1,+\infty)$. Let $\Omega\subset X$ be an open subset and consider  $[0,r]\subset I_{K,N}$ such that $\mm_{K,N}([0,r])=\mm(\Omega)$.
Let $u:\Omega\to[0,+\infty)$ be Borel and let $u^*:[0,r]\to[0,+\infty)$ be its monotone rearrangement.
\\ Then $u$ and $u^*$ have the same distribution function (we will often say that they are equimeasurable). 
Moreover,
\begin{equation}\label{eq:equalLpnorm}
\norm{u}_{L^p(\Omega,\meas)}=\norm{u^*}_{L^p([0,r],\meas_{K,N})}, \quad \forall 1\le p<+\infty,
\end{equation}
and the monotone rearrangement operator $L^{p}(\Omega,\meas)\ni u \mapsto u^{*} \in L^{p}([0,r],\meas_{K,N})$ is continuous.
\end{proposition}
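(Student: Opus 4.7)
The statement bundles three claims: equimeasurability of $u$ and $u^{*}$, equality of $L^{p}$-norms, and continuity of the rearrangement operator. I would establish them in this order, each building on the previous.

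\emph{Equimeasurability.} The map $\phi\colon[0,r]\to[0,\meas(\Omega)]$, $\phi(x):=\meas_{K,N}([0,x])$, is a continuous strictly increasing bijection pushing $\meas_{K,N}\res [0,r]$ forward to $\leb^{1}\res[0,\meas(\Omega)]$. Therefore
\[
\meas_{K,N}(\{u^{*}>t\})=\meas_{K,N}(\{x\in[0,r]:u^{\#}(\phi(x))>t\})=\leb^{1}(\{s\in(0,\meas(\Omega)]:u^{\#}(s)>t\}).
\]
From the definition $u^{\#}(s)=\inf\{\tau:\mu(\tau)<s\}$ and the monotonicity of $\mu$, one verifies that the set $\{s:u^{\#}(s)>t\}$ differs from $(0,\mu(t)]$ by at most the at-most-countably many jump points of $\mu$, hence has Lebesgue measure $\mu(t)=\meas(\{u>t\})$. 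This proves equimeasurability.

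\emph{$L^{p}$-equality and continuity.} The identity \eqref{eq:equalLpnorm} is then immediate from the Cavalieri/layer-cake formula $\int f^{p}\,d\meas=p\int_{0}^{\infty}t^{p-1}\meas(\{f>t\})\,dt$ applied to $u$ and $u^{*}$, together with equimeasurability. For continuity, suppose $u_{n}\to u$ in $L^{p}(\Omega,\meas)$. By a standard subsequence argument it suffices to show that every subsequence of $(u_{n}^{*})$ admits a further subsequence converging to $u^{*}$ in $L^{p}([0,r],\meas_{K,N})$. Passing to a subsequence we may assume $u_{n}\to u$ $\meas$-a.e.; the dominated convergence theorem applied to $\chi_{\{u_{n}>t\}}$ then yields $\mu_{u_{n}}(t)\to\mu_{u}(t)$ at every $t$ with $\meas(\{u=t\})=0$, that is, at all but countably many $t$. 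Pointwise convergence of the monotone distribution functions at a dense set forces convergence of the generalized inverses $u_{n}^{\#}\to u^{\#}$ at every continuity point of $u^{\#}$, and composing with the homeomorphism $\phi$ gives $u_{n}^{*}\to u^{*}$ $\meas_{K,N}$-a.e.

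\emph{Main obstacle.} The delicate step is upgrading $\meas_{K,N}$-a.e.\ convergence to $L^{p}$-convergence. Here the identity \eqref{eq:equalLpnorm} just established is decisive: since $\|u_{n}^{*}\|_{L^{p}(\meas_{K,N})}=\|u_{n}\|_{L^{p}(\meas)}\to\|u\|_{L^{p}(\meas)}=\|u^{*}\|_{L^{p}(\meas_{K,N})}$, the combination of a.e.\ convergence and convergence of norms in $L^{p}$ implies strong $L^{p}$-convergence (by Brezis--Lieb for $1<p<\infty$, or Scheff\'e for $p=1$, exploiting non-negativity of the rearrangements). The usual subsequence principle then upgrades the subsequential statement to convergence of the full sequence, completing the proof.
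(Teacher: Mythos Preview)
Your argument is correct. Note that the paper does not actually supply its own proof of this proposition: it simply refers to \cite[Chapter~1]{Kesavan} and remarks that the Euclidean argument adapts with minor modifications. So the comparison is between your proof and the standard one in that reference.

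For equimeasurability and the equality of $L^{p}$-norms your argument is essentially the classical one. (A small simplification: once you observe that $u^{\#}(s)>t$ is equivalent to $\mu(t)\ge s$ for $s>0$, the set $\{s>0:u^{\#}(s)>t\}$ is exactly $(0,\mu(t)]$, with no exceptional points coming from jumps of $\mu$; your more cautious phrasing is harmless but unnecessary.)

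For the continuity statement, however, your route differs from the standard one. The approach in \cite{Kesavan} (and more generally in the rearrangement literature) is to prove the \emph{non-expansivity} inequality
\[
\norm{u^{*}-v^{*}}_{L^{p}([0,r],\meas_{K,N})}\le \norm{u-v}_{L^{p}(\Omega,\meas)},
\]
valid for all non-negative $u,v\in L^{p}$, which yields $1$-Lipschitz continuity of the rearrangement operator directly. Your argument instead passes through a.e.\ convergence of a subsequence, convergence of distribution functions at continuity points, and then invokes Brezis--Lieb/Scheff\'e to upgrade a.e.\ plus norm convergence to $L^{p}$-convergence. This is a perfectly valid alternative, and it has the virtue of relying only on the soft facts already at hand (equimeasurability and layer-cake), but it is more indirect and gives only continuity rather than the sharper contraction property. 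Since the paper later uses only continuity of the rearrangement (e.g.\ in the proof of \autoref{cor:symmapssobolevintosobolev}), your version suffices for every downstream application.
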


Motivated by the working assumptions of \autoref{lemma:derivativedistribution}, we state and prove the following general result about approximation via functions with non vanishing minimal weak upper gradient.

\begin{lemma}[Approximation with non vanishing gradients]\label{lemma:approximationwithnonvanishing}
Let $(X,\dist,\meas)$ be a locally compact geodesic metric measure space  and let  $\Omega\subset X$ be an open subset with $\meas(\Omega)<+\infty$.  
\\Then for any non-negative $u\in\LIPc(\Omega)$ with $\int\abs{\nabla u}^p\di \meas<\infty$, there exists a sequence $(u_n)_{n\in\setN}$  with $u_n\in \LIPc(\Omega)$ non-negative, $\abs{\nabla u_n}\neq 0$ $\meas$-a.e. on $\set{u_n>0}$ for any $n\in\setN$ and such that $u_n\to u$ in $W^{1,p}(X,\meas)$. In particular $\int\abs{\nabla u_n}^p\di \meas\to\int\abs{\nabla u}^p\di \meas$ as $n\to\infty$.
\end{lemma}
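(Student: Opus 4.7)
I would construct the approximating sequence as additive perturbations $u_n := u + \eps_n \phi$, where $\phi$ is the distance function to the zero-set of $u$ and $\eps_n \searrow 0$ is a generic null sequence. The point of this choice is that $\phi$ has slope identically $1$ on $\{u > 0\}$, so adding $\eps_n \phi$ produces a uniform shift of the slope of $u$; a generic $\eps_n$ then avoids the countably many ``resonance'' values of $\abs{\nabla u}$ where a genuine cancellation could occur.

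\textbf{Construction and verification.} Set $Z := \{u = 0\}$, which is closed and non-empty as soon as $\supp(u) \neq X$ (in the remaining degenerate case, redefine $Z := \{x_0\}$ for any $x_0 \in X$; nothing else changes). Define $\phi(x) := \sfd(x, Z)$: it is $1$-Lipschitz, vanishes on $Z$, is strictly positive on $\{u > 0\}$, and $\supp(\phi) = \supp(u) \Subset \Omega$. I would then show $\abs{\nabla \phi}(x) = 1$ at every $x \in \{u > 0\}$ as follows: for small $t > 0$, pick $z_t \in Z$ with $\sfd(x, z_t) \leq \phi(x) + t^2$ and a unit-speed geodesic $\gamma_t$ from $x$ to $z_t$; the point $y_t := \gamma_t(t)$ satisfies $\sfd(x, y_t) = t$ and
\begin{equation*}
\phi(x) - t \;\leq\; \phi(y_t) \;\leq\; \sfd(y_t, z_t) \;=\; \sfd(x, z_t) - t \;\leq\; \phi(x) + t^2 - t,
\end{equation*}
so $\abs{\phi(x) - \phi(y_t)}/\sfd(x, y_t) \geq 1 - t \to 1$, giving $\abs{\nabla \phi}(x) \geq 1$; the reverse bound holds by the $1$-Lipschitz property. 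Now set $u_n := u + \eps_n \phi$: then $u_n \in \LIPc(\Omega)$ is non-negative with $\{u_n > 0\} = \{u > 0\}$, and $\norm{u_n - u}_{W^{1,p}} \leq \eps_n \norm{\phi}_{W^{1,p}} \to 0$ since $\phi$ is bounded Lipschitz with support of finite measure. The elementary reverse triangle inequality for slopes, $\abs{\nabla(f + g)} \geq \bigl|\,\abs{\nabla f} - \abs{\nabla g}\,\bigr|$ (derived by inserting $\abs{a + b} \geq \bigl|\abs a - \abs b\bigr|$ in the $\limsup$ defining the slope and extracting a subsequence on which the slope of $f$ is asymptotically attained), then yields
\begin{equation*}
\abs{\nabla u_n}(x) \;\geq\; \bigl|\,\abs{\nabla u}(x) - \eps_n\,\bigr| \qquad \text{for every } x \in \{u > 0\},
\end{equation*}
so $\{\abs{\nabla u_n} = 0\} \cap \{u_n > 0\} \subseteq \{\abs{\nabla u} = \eps_n\}$. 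The push-forward $(\abs{\nabla u})_\sharp (\meas \res \{u > 0\})$ is a finite Borel measure on $[0, \Lip(u)]$ and has at most countably many atoms; choosing $\eps_n \searrow 0$ in the complement gives $\meas(\{\abs{\nabla u} = \eps_n\}) = 0$, concluding the main claim. The final assertion $\int \abs{\nabla u_n}^p \di \meas \to \int \abs{\nabla u}^p \di \meas$ follows from $W^{1,p}$-convergence together with the identification of slope with minimal weak upper gradient for Lipschitz functions in the paper's framework (Proposition~\ref{prop:identification} above).

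\textbf{Main obstacle.} The most delicate point is establishing $\abs{\nabla \phi} \equiv 1$ on $X \setminus Z$ \emph{without} assuming properness of $X$: the infimum $\phi(x)$ need not be realized by any $z \in Z$, so one cannot simply argue along a geodesic from $x$ to a minimizer. The quantitative choice of $z_t$ with error $t^2$, quadratically smaller than the length scale $t$ at which the slope is computed, is exactly what compensates for this. Once this geometric input is in place, the remainder is a clean combination of the reverse triangle inequality for slopes and the at-most-countability of atoms of a finite Borel measure on a bounded interval.
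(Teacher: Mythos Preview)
Your proof is correct and follows essentially the same strategy as the paper's: an additive perturbation by a distance function with unit slope, the reverse triangle inequality for slopes, and a generic choice of $\eps_n$ avoiding the at-most-countably-many levels of positive measure. The only differences are cosmetic---the paper perturbs by $\sfd(\cdot, X\setminus\Omega')$ for an intermediate open set $\Omega'$ with $\supp u\subset\Omega'\Subset\Omega$ rather than your $\sfd(\cdot,\{u=0\})$---and one minor slip: your final appeal to Proposition~\ref{prop:identification} is both unnecessary and out of scope (the lemma is stated for general locally compact geodesic spaces, not $\RCD$ spaces), but the convergence $\int|\nabla u_n|^p\,\di\meas\to\int|\nabla u|^p\,\di\meas$ follows directly from the pointwise bound $\bigl|\,|\nabla u_n|-|\nabla u|\,\bigr|\le\eps_n|\nabla\phi|\le\eps_n$ on the compact support.
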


\begin{proof}
It is straightforward to check that there exists a sequence $\left(\epsilon_n\right)_{n\in\setN}$ monotonically converging to $0$ from above such that $\meas(\{\abs{\nabla u}=\epsilon_n\})=0$ for any $n\in\setN$.
\\Choose an open set $\Omega'$ containing the support of $u$ and compactly contained in $\Omega$.
Let $v:\Omega\to[0,+\infty)$ be the distance function from the complementary of $\Omega'$ in $X$, namely
\begin{equation*}
v(x):=\Dist(x,X\setminus\Omega')\quad\text{for any $x\in\Omega$}.
\end{equation*}
Observe that $v\in \LIPc(\Omega)$, moreover
\begin{equation}\label{eq:gradientdistance}
\abs{\nabla v}(x)=1\quad\text{for any $x\in\Omega'$}.
\end{equation}
Indeed it suffices to observe that the restriction of $v$ to any geodesic connecting $x$ with $y\in X\setminus\Omega'$ such that $v(x)=\dist(x,y)$ has slope equal to $1$ at $x$.

Next we introduce the approximating sequence $u_n:=u+\epsilon_nv$ and we claim that it has the desired properties. Indeed, if $u\in \LIPc(\Omega)$ is non-negative, then also $u_n\in\LIPc(\Omega)$ is so. From the inequality
\begin{equation*}
\abs{\nabla(u+\epsilon_nv)}\ge\abs{\abs{\nabla u}-\epsilon_n\abs{\nabla v}}
\end{equation*}
and from \eqref{eq:gradientdistance} it follows that $\{\abs{\nabla{u_n}}=0\}\cap \set{u_n>0}\subset\{\abs{\nabla{u}}=\epsilon_n\}$. Since the $\epsilon_n$ are chosen in such a way that $\meas(\{\abs{\nabla u}=\epsilon_n\})=0$, we infer that  $\meas(\{\abs{\nabla u_n}=0\}\cap\set{u_n>0})=0$ .
\\Clearly $u_n$ converge uniformly to $u$ as $n\to\infty$, granting in particular that $u_n\to u$ in $L^{p}(\Omega,\meas)$.\\
At the same time it holds that
\begin{equation*}
\abs{\nabla (u_n-u)}=\eps_n\abs{\nabla v}.
\end{equation*}
Therefore
\begin{equation*}
\int_{\Omega}\abs{\nabla (u_n-u)}^p\di\meas=\eps_n^p\int_{\Omega}\abs{\nabla v}^p\di\meas\to 0,
\end{equation*}
yielding $u_n\to u$ in $W^{1,p}(X,\dist,\meas)$.
\end{proof}

\begin{corollary}\label{cor:approximationnonvanishing1}
Let $(X,\dist,\meas)$ be a geodesic metric measure space verifying locally doubling and a weak $1$-$1$ Poincar\'e inequality  and let  $\Omega\subset X$ be an open subset with  $\meas(\Omega)<+\infty$.
\\Fix any $1<p<+\infty$ and let $u,u_n\in \LIPc(\Omega)$ be as in the statement and the proof of \autoref{lemma:approximationwithnonvanishing}. Then, for any $n\in\setN$, it holds that $\abs{\nabla u_n}_1(x)\neq 0$ for  $\meas$-a.e. $x\in\set{u_n>0}$. 
\end{corollary}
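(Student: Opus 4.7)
The approach is to combine the slope-level conclusion of \autoref{lemma:approximationwithnonvanishing} with an identification between $\abs{\nabla u_n}_1$ and $\abs{\nabla u_n}$ which is available under the present hypotheses. This identification is nothing but the non-$\RCD$ analogue of \autoref{prop:identification}.

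First, I would recall that under the standing assumptions of local doubling and weak $1$-$1$ Poincar\'e inequality, the fundamental result of Cheeger \cite{Cheeger} asserts that for every $f\in\LIPloc(X)$ and every $p\in(1,\infty)$ the slope $\abs{\nabla f}$ coincides $\meas$-a.e. with the minimal $p$-weak upper gradient. Combined with the identification of the density of the $\BV$ total variation measure with the minimal weak upper gradient for Lipschitz functions (Miranda \cite{MirandaJr}, Ambrosio--Di Marino \cite{AmbrosioDiMarino}), this yields
\begin{equation*}
\abs{Df}_*=\abs{\nabla f}\meas\quad\text{for every $f\in\LIP(X)$,}
\end{equation*}
that is, $\abs{\nabla f}_1(x)=\abs{\nabla f}(x)$ for $\meas$-a.e. $x\in X$. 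This is exactly the statement of \autoref{prop:identification}, but obtained from doubling and Poincar\'e instead of the $\RCD$ assumption, by essentially the same argument sketched before that proposition.

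Next, I would apply the previous lemma: the explicit approximation $u_n:=u+\varepsilon_n v$ constructed in the proof of \autoref{lemma:approximationwithnonvanishing} satisfies $\meas\left(\{\abs{\nabla u_n}=0\}\cap\{u_n>0\}\right)=0$ for every $n\in\setN$. Applying the identification above to $u_n\in\LIPc(\Omega)\subset\LIP(X)$, one obtains $\abs{\nabla u_n}_1(x)=\abs{\nabla u_n}(x)\neq 0$ for $\meas$-a.e. $x\in\{u_n>0\}$, which is the desired conclusion.

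No serious obstacle is expected: the only step requiring care is verifying that the Cheeger--Miranda--Ambrosio--Di Marino chain of identifications applies to the specific functions $u_n$ (which is immediate, since they are compactly supported Lipschitz) and then combining the two $\meas$-null sets---the one produced by the identification and the one coming from \autoref{lemma:approximationwithnonvanishing}---into a single null set off which both $\abs{\nabla u_n}_1=\abs{\nabla u_n}$ and $\abs{\nabla u_n}>0$ hold simultaneously on $\{u_n>0\}$.
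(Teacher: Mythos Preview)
Your approach is essentially the same as the paper's: start from the conclusion $\abs{\nabla u_n}\neq 0$ $\meas$-a.e.\ on $\{u_n>0\}$ from \autoref{lemma:approximationwithnonvanishing} and transfer it to $\abs{\nabla u_n}_1$ via a comparison between slope and $\abs{\nabla\cdot}_1$. The only difference is in the comparison step. The paper does not aim for the full identification $\abs{\nabla f}_1=\abs{\nabla f}$; it invokes instead the one-sided inequality
\[
\abs{\nabla f}(x)\le c\,\abs{\nabla f}_1(x)\quad\text{for $\meas$-a.e.\ }x,
\]
proved in \cite{AmbrosioPinamontiSpeight} under locally doubling and Poincar\'e, which is all that is needed to conclude. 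Your route asserts the equality $\abs{\nabla f}_1=\abs{\nabla f}$ from Cheeger plus \cite{MirandaJr,AmbrosioDiMarino}; note that the paper itself, in the discussion preceding \autoref{prop:identification}, obtains that equality only by combining Cheeger with the $\RCD$-specific result of \cite{GigliHan14}, so under bare doubling and Poincar\'e the precise chain of references you list does not obviously yield equality with constant $1$. This is harmless for the corollary (only the implication $\abs{\nabla f}=0\Rightarrow\abs{\nabla f}_1=0$ is used, and even the reverse inequality with a constant suffices), but the paper's citation of \cite{AmbrosioPinamontiSpeight} is the cleaner and more economical justification here.
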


\begin{proof}
One of the properties of the approximating sequence in \autoref{lemma:approximationwithnonvanishing} is that $\abs{\nabla u_n}(x)\neq 0$ for $\meas$-a.e. $x\in\set{u_n>0}$. The desired conclusion follows from \cite{AmbrosioPinamontiSpeight} where it is proved that, under the locally doubling and Poincar\'e assumption, there exists $c>0$ such that
\begin{equation*}
\abs{\nabla f}(x)\le c\abs{\nabla f}_1(x) \quad \text{for $\meas$-a.e. $x\in X$},
\end{equation*}
for any function $f\in \LIPloc(X)$.
\end{proof}

\begin{remark}\label{rem:CDPoincare}
Since any essentially non branching $\CD(K,N)$ metric measure space is (locally) doubling and verifies a weak $1$-$1$ Poincar\'e inequality (see \cite{VonRenesse08}), \autoref{cor:approximationnonvanishing1} applies to the case of our interest. 
\end{remark}

In \autoref{prop:liptolip} below we extend to the non smooth setting \cite[Theorem 2.3.2]{Kesavan}; the key idea is to replace the Euclidean isoperimetric inequality with the L\'evy-Gromov isoperimetric inequality \autoref{thm:LevyGromov}.

\begin{proposition}[Lipschitz to Lipschitz property of the rearrangement]\label{prop:liptolip}
Let $(X,\dist,\meas)$ with $\mm(X)=1$ be an essentially non branching $\CD(K,N)$ space for some $K>0$, $N\in(1,+\infty)$. Let $\Omega\subset X$ be an open subset and consider  $[0,r]\subset I_{K,N}$ such that $\mm_{K,N}([0,r])=\mm(\Omega)$. 
\\Let  $u\in \LIP(\Omega)$ be non-negative with Lipschitz constant $L\ge0$ and  assume that $\abs{\nabla u}_1(x)\neq 0$ for $\meas$-a.e. $x\in\set{u>0}$. 
\\Then $u^*:[0,r]\to[0,\infty)$ is $L$-Lipschitz as well.
\end{proposition}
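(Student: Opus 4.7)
The plan is to extend the classical Kesavan argument \cite[Theorem 2.3.2]{Kesavan} to the $\CD(K,N)$ setting, substituting the L\'evy-Gromov inequality (\autoref{thm:LevyGromov}) for the Euclidean isoperimetric inequality and the metric-measure coarea formula (\autoref{thm:coarea}) for its smooth counterpart. The ultimate target is the pointwise bound $\abs{(u^*)'}\le L$ almost everywhere on $[0,r]$.

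I would first show that the distribution function $\mu$ is well-behaved. Since $u$ is $L$-Lipschitz, $\abs{\nabla u}_1\le L$ $\meas$-a.e., and the locality property (\autoref{prop:localityof1}) applied to the translates $u-t$ together with the hypothesis $\abs{\nabla u}_1\neq 0$ $\meas$-a.e.\ on $\{u>0\}$ forces $\meas(\{u=t\})=0$ for every $t>0$; hence $\mu$ is continuous on $(0,\infty)$. Applying the coarea formula with $f\equiv 1$ then gives
$$\int_{s}^{t}\Per(\{u>\tau\})\di\tau=\int_{\{s\le u<t\}}\abs{\nabla u}_1\di\meas\le L[\mu(s)-\mu(t)],\qquad 0\le s<t,$$
from which $\mu$ is absolutely continuous on $(0,{\ess}\sup u)$ and $-\mu'(\tau)\ge\Per(\{u>\tau\})/L$ for a.e.\ $\tau$. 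Combined with the L\'evy-Gromov bound $\Per(\{u>\tau\})\ge\mathcal{I}_{K,N}(\mu(\tau))=h_{K,N}(F(\mu(\tau)))$, where $F:[0,\meas(\Omega)]\to[0,r]$ denotes the inverse of $x\mapsto\meas_{K,N}([0,x])$, this yields
$$-\mu'(\tau)\ge\frac{h_{K,N}(F(\mu(\tau)))}{L}\qquad\text{for a.e.\ }\tau\in(0,{\ess}\sup u).$$
In particular $\mu$ is strictly decreasing on its effective domain, so its inverse is well-defined and absolutely continuous there.

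Next I would transfer the bound to $u^*$. Equimeasurability (\autoref{prop:equimeasurability}) together with continuity and strict monotonicity of $\mu$ yield $\meas_{K,N}([0,x])=\mu(u^*(x))$ for every $x\in[0,r]$, equivalently $u^*(x)=\mu^{-1}(\meas_{K,N}([0,x]))$. The chain rule for the composition of monotone absolutely continuous functions gives
$$h_{K,N}(x)=\mu'(u^*(x))\cdot(u^*)'(x)\qquad\text{for a.e.\ }x\in(0,r),$$
and substituting the lower bound on $\abs{\mu'}$ from the previous step yields $\abs{(u^*)'(x)}\le L$ a.e., so $u^*$ is $L$-Lipschitz on $[0,r]$.

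The main technical obstacle is passing from the integrated coarea inequality to the pointwise bound $-\mu'\ge\Per(\{u>\cdot\})/L$ and rigorously justifying the chain rule. The critical ingredients are the absolute continuity of $\mu$, which follows from the coarea identity once $\mu$ is continuous, and the strict monotonicity of $\mu$ on its effective domain, forced by the strict positivity of the L\'evy-Gromov lower bound $h_{K,N}(F(\mu(\tau)))$ whenever $0<\mu(\tau)<1$.
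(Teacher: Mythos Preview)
Your overall strategy matches the paper's: combine the coarea formula with the L\'evy--Gromov inequality to control the derivative of the rearrangement. However, there is a genuine gap in the passage to absolute continuity. The displayed inequality
\[
\int_{s}^{t}\Per(\{u>\tau\})\di\tau\le L\bigl[\mu(s)-\mu(t)\bigr]
\]
is a \emph{lower} bound on the decrement $\mu(s)-\mu(t)$, not an upper bound, so it cannot by itself yield absolute continuity of $\mu$. (Absolute continuity of $\mu$ does hold under these hypotheses, but its proof requires a different test function in the coarea formula, namely $f_\epsilon=\abs{\nabla u}_1/(\abs{\nabla u}_1^2+\epsilon)$; see \autoref{lemma:derivativedistribution}.) More importantly, your final implication---from $\abs{(u^*)'}\le L$ a.e.\ to $u^*$ being $L$-Lipschitz---presupposes that $u^*$ is already absolutely continuous, which you have not established; a monotone function can have bounded a.e.\ derivative and still fail to be Lipschitz. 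The chain-rule identity you write down suffers from the same circularity.

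The paper sidesteps both issues by staying at the integrated level long enough to obtain regularity of $u^{\#}$ directly. From the same coarea estimate and L\'evy--Gromov, the mean value theorem gives
\[
\bigl(u^{\#}(s)-u^{\#}(s+k)\bigr)\,\mathcal{I}_{K,N}\bigl(\mu(\xi)\bigr)\le Lk
\]
for some intermediate $\xi$. Since $\mathcal{I}_{K,N}$ is continuous and strictly positive on $(0,1)$, this shows \emph{a priori} that $u^{\#}$ is locally Lipschitz on $(0,\meas(\Omega))$, hence absolutely continuous. Only then does one differentiate to get $-(u^{\#})'(s)\le L/\mathcal{I}_{K,N}(s)$, recognise $1/\mathcal{I}_{K,N}(s)$ as the derivative of the inverse volume function $r(s)$, and integrate legitimately to conclude $u^*(x)-u^*(y)\le L(y-x)$. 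Your argument can be repaired along exactly these lines: first prove local Lipschitz continuity of $u^{\#}$ from the integrated inequality, and only afterwards invoke derivatives and the chain rule.
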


\begin{proof}
Let $\mu$ be the distribution function associated to $u$ and denote by $M:=\sup{u}<+\infty$.
Observe that our assumptions grant continuity and strict monotonicity of $\mu$. Therefore for any $s,k\ge 0$ such that $s+k\le\meas(\Omega)$ we can find $0\le t-h\le t\le M$ in such a way that $\mu(t-h)=s+k$ and $\mu(t)=s$.
Taking into account the assumption that $u$ is $L$-Lipschitz we can say that
\begin{equation}\label{eq:estimatedistribution}
\int_{\{t-h\le u\le t\}}\abs{\nabla u}_1\di \meas\le L\left(\mu(t-h)-\mu(t)\right).
\end{equation}
On the other hand, an application of the coarea formula \eqref{eq:coarea} yields 
\begin{equation}\label{eq:intliptolip}
\int_{\{t-h\le u\le t\}}\abs{\nabla u}_1\di \meas=\int_{t-h}^t\Per(\{u\ge r\})\di r.
\end{equation}
Applying the L\'evy-Gromov isoperimetric inequality \autoref{thm:LevyGromov} we can estimate the right hand side of \eqref{eq:intliptolip} in the following way:
\begin{equation}\label{eq:estimateisoperimetric}
\int_{t-h}^t\Per(\{u\ge r\})\di r\ge \int_{t-h}^t\mathcal{I}_{K,N}(\mu(r))\di r.
\end{equation}
Recalling that the model isoperimetric profile $\mathcal{I}_{K,N}$ is continuous and that $\mu$ is continuous, combining \eqref{eq:estimatedistribution} with \eqref{eq:estimateisoperimetric} and eventually applying the mean value theorem we get
\begin{equation}\label{eq:contestimate}
Lk\ge \int_{t-h}^t\mathcal{I}_{K,N}(\mu(r))\di r=h\mathcal{I}_{K,N}(\mu(\xi_{t-h}^t)),
\end{equation}
for some $t-h\le\xi_{t-h}^t\le t$. Calling $u^{\#}$ the inverse of the distribution function, the estimate  \eqref{eq:contestimate} can be rewritten as
\begin{equation}\label{eq:loclipest}
\left(u^{\#}(s)-u^{\#}(s+k)\right)\mathcal{I}_{K,N}(\mu(\xi_{t-h}^t))\le Lk. 
\end{equation}
Since $\mathcal{I}_{K,N}$ is strictly positive on $(0,1)$, it follows from \eqref{eq:loclipest} that $u^{\#}$ is locally Lipschitz. Moreover, at any differentiability point $s$ of $u^{\#}$ (which in particular form a set of full $\leb^1$-measure on $(0,1)$), it holds
\begin{equation}\label{eq:estimatederivative}
-\didi{s}u^{\#}(s)\le \frac{L}{\mathcal{I}_{K,N}(s)}.
\end{equation}
Let $r:[0,1]\to I_{K,N}$ be such that $r(\meas_{K,N}([0,x]))=x$ for any $x\in I_{K,N}$. Differentiating in $t$ the identity
\begin{equation*}
\int_0^{r(t)}h_{K,N}(s)\di s=t,
\end{equation*}
we obtain that $1=\didi{t}r(t)h_{K,N}(r(t))$ and, since we know that $\mathcal{I}_{K,N}(s)=h_{K,N}(r(s))$, it follows that 
\begin{equation}\label{eq:expressionderivative}
\didi{t}r(t)=\frac{1}{\mathcal{I}_{K,N}(t)}.
\end{equation}
By definition of the rearrangement $u^*$,  for any $x\in I_{K,N}$ it holds that $u^*(x)=u^{\#}(\meas_{K,N}([0,x]))$. Combining the last identity with \eqref{eq:estimatederivative} and \eqref{eq:expressionderivative} we can estimate for $x\leq y$
\begin{align*}
0\leq u^*(x)-u^*(y)&=u^{\#}(\meas_{K,N}([0,x]))-u^{\#}(\meas_{K,N}([0,y]))\\
&=\int_{\meas_{K,N}([0,x])}^{\meas_{K,N}([0,y])}-\didi{s}u^{\#}(s)\di s\\
&\le\int_{\meas_{K,N}([0,x])}^{\meas_{K,N}([0,y])}L\didi{s}r(s)\di s\\
&=Lr(\meas_{K,N}([0,y]))-Lr(\meas_{K,N}([0,x]))=Ly-Lx,
\end{align*}
which gives the $L$-Lipschitz continuity of the monotone rearrangement $u^*$.
\end{proof}

The next lemma should be compared with  \cite{Kesavan}, dealing with the case of smooth functions in Euclidean domains.

\begin{lemma}[Derivative of the distribution function]\label{lemma:derivativedistribution}
Let $(X,\dist,\meas)$ be a metric measure space and let $\Omega\subset X$ be an open subset with $\meas(\Omega)<+\infty$.
Assume that $u\in \LIPloc(\Omega)$ is non-negative and $\abs{\nabla u}_1(x)\neq0$ for $\meas$-a.e. $x\in\set{u>0}$. Then its distribution function $\mu:[0, +\infty)\to [0, \meas(\Omega)]$, defined in \eqref{eq:defdistr}, is absolutely continuous. Moreover it holds
\begin{equation}\label{eq:derivativedistributionfunction}
\mu'(t)=-\int\frac{1}{\abs{\nabla u}_1}\di \Per(\{u>t\}) \quad \text{for $\leb^1$-a.e. $t$},
\end{equation}
where the quantity $1/\abs{\nabla u}_1$ is defined to be $0$ whenever $\abs{\nabla u}_1=0$.
\end{lemma}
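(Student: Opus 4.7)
The guiding idea is to apply the coarea formula of \autoref{thm:coarea} twice, once with constant weight and once with weight $1/|\nabla u|_1$, exploiting the identity $d|Du|_* = |\nabla u|_1\,d\meas$ that holds on $\{|\nabla u|_1 > 0\}$. Taking $v = u$ and $f = 1/|\nabla u|_1$ (set to $0$ where $|\nabla u|_1$ vanishes, as stipulated), the two factors cancel on the left-hand side of \eqref{eq:coarea} and, for every $0 < s < t$, one obtains
\begin{equation*}
\meas\bigl(\{s \le u < t\} \cap \{|\nabla u|_1 > 0\}\bigr) = \int_s^t g(r)\,dr, \qquad g(r) := \int \frac{1}{|\nabla u|_1}\,d\Per(\{u > r\}).
\end{equation*}
Since for $s > 0$ one has $\{s \le u < t\} \subset \{u > 0\}$, the standing hypothesis allows one to replace the left-hand side by $\meas(\{s \le u < t\})$. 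Letting $t \to +\infty$ and then $s \to 0^+$ and using $\meas(\Omega) < +\infty$ together with monotone convergence yields $g \in L^1((0, +\infty))$.

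To pass from this to an identity for $\mu$, I would use the decomposition
\begin{equation*}
\meas(\{s \le u < t\}) = \mu(s) - \mu(t) + \meas(\{u = s\}) - \meas(\{u = t\}).
\end{equation*}
The outstanding point is to show that $\mu$ has no atoms on $(0, +\infty)$, i.e.\ $\meas(\{u = r\}) = 0$ for every $r > 0$. This is obtained by a second application of \autoref{thm:coarea}, this time with $f \equiv 1$, which produces $|Du|_*(\{s \le u < t\}) = \int_s^t \Per(\{u > \rho\})\,d\rho$; sending $s \uparrow r$ and $t \downarrow r$ gives $|Du|_*(\{u = r\}) = 0$, i.e.\ $\int_{\{u = r\}} |\nabla u|_1 \,d\meas = 0$. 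For $r > 0$, the hypothesis ensures $|\nabla u|_1 > 0$ $\meas$-a.e.\ on $\{u = r\} \subset \{u > 0\}$, forcing $\meas(\{u = r\}) = 0$.

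Combining these steps produces the integral identity $\mu(s) - \mu(t) = \int_s^t g(r)\,dr$ for every $0 < s < t$, from which absolute continuity of $\mu$ on $(0, +\infty)$ is immediate, and the Lebesgue differentiation theorem delivers $-\mu'(t) = g(t)$ for $\leb^1$-a.e.\ $t$, which is exactly \eqref{eq:derivativedistributionfunction}. Absolute continuity up to the endpoint $0$ then follows by letting $s \downarrow 0$ and invoking continuity of $\mu$ at the origin (i.e.\ $\lim_{s \downarrow 0} \mu(s) = \meas\bigl(\bigcup_{s>0}\{u > s\}\bigr) = \meas(\{u > 0\}) = \mu(0)$). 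I expect the substantive obstacle to be precisely the verification that $\mu$ has no atoms on $(0, +\infty)$; once this has been secured by the second coarea argument, the rest is bookkeeping around the integral identity.
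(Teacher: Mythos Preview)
Your argument is correct and rests on the same coarea identity as the paper's proof, but the two implementations differ in how the singular weight $1/\abs{\nabla u}_1$ is handled. The paper regularises: it applies coarea once with the bounded weight $f_\epsilon := \abs{\nabla u}_1/(\abs{\nabla u}_1^2+\epsilon)$, so that the left-hand side becomes $\int_{\{t\le u< t+h\}} \frac{\abs{\nabla u}_1^2}{\abs{\nabla u}_1^2+\epsilon}\,\di\meas$, and then lets $\epsilon\downarrow 0$ by monotone convergence---the integrand increases to $\mathbf{1}_{\{\abs{\nabla u}_1>0\}}$, which equals $1$ $\meas$-a.e.\ on $\{u>0\}$ by hypothesis, so the increment $\mu(t)-\mu(t+h)$ drops out directly. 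You instead insert the unbounded weight $1/\abs{\nabla u}_1$ from the start and then deal with possible atoms of $\mu$ by a separate argument. Both routes are sound; the regularisation buys a single coarea application and sidesteps any explicit discussion of level-set atoms, whereas your version makes the reason for $\meas(\{u=r\})=0$ completely explicit. One small observation: your second coarea step is actually unnecessary---once you have $\meas(\{s\le u<t\})=\int_s^t g(r)\,\di r$ for all $0<s<t$ with $g\in L^1$, sending $s\uparrow r$ and $t\downarrow r$ already yields $\meas(\{u=r\})=0$ by continuity of the finite measure $\meas$ from above, without invoking \autoref{thm:coarea} a second time.
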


\begin{proof}
Fix any $\epsilon>0$ and define
\begin{equation*}
f_{\epsilon}(x):=\frac{\abs{\nabla u(x)}_1}{\abs{\nabla u(x)}_1^2+\epsilon}.
\end{equation*}
Fixing $t\ge 0$ and $h>0$, an application of the coarea formula \eqref{eq:coarea} with $f=f_{\epsilon}$ yields to
\begin{equation}\label{eq:intermderiv}
\int_{\{t\le u\le t+h\}}\frac{\abs{\nabla u}_1^2}{\abs{\nabla u}_1^2+\epsilon}\di \meas=\int_{t}^{t+h}\left(\int\frac{\abs{\nabla u}_1}{\abs{\nabla u}_1^2+\epsilon}\di \Per(\{u> r\})\right)\di r.
\end{equation}
Now we pass to the limit as $\epsilon \to 0$ both at the right hand side and at the left hand side in \eqref{eq:intermderiv}. The assumption that $\abs{\nabla u}_1\neq0$ $\meas$-a.e. grants that the integrand at the left hand side monotonically converges $\meas$-a.e. to $1$. Thus an application of the monotone convergence theorem yields that
\begin{equation}\label{eq:intermediate2}
\int_{\{t\le u\le t+h\}}\frac{\abs{\nabla u}_1^2}{\abs{\nabla u}_1^2+\epsilon}\di \meas\to \mu(t)-\mu(t+h)\quad \text{as $\epsilon \to 0$}.
\end{equation}

With the above mentioned convention about the value of $1/\abs{\nabla u}_1$ at points where $\abs{\nabla u}_1=0$, applying the monotone convergence theorem twice at the right hand side of \eqref{eq:intermderiv}, we get
\begin{equation}\label{eq:int3derivativedistri}
\int_{t}^{t+h}\left(\int\frac{\abs{\nabla u}_1}{\abs{\nabla u}_1^2+\epsilon}\di \Per(\{u> r\})\right)\di r\to \int_{t}^{t+h}\left(\int\frac{1}{\abs{\nabla u}_1}\di \Per(\{u>r\})\right)\di r
\end{equation}
as $\epsilon$ goes to $0$.
Combining \eqref{eq:intermderiv}, \eqref{eq:intermediate2} and \eqref{eq:int3derivativedistri}, we get
\begin{equation*}
\mu(t)-\mu(t+h)=\int_{t}^{t+h}\left(\int\frac{1}{\abs{\nabla u}_1}\di \Per(\{u>r\})\right)\di r.
\end{equation*}
It follows that the distribution function is absolutely continuous and therefore differentiable at almost all points with derivative given by the explicit expression \eqref{eq:derivativedistributionfunction}.
\end{proof}

Before proceeding to the statement and the proof of the Polya-Szego inequality we need an identification result between slopes and $1$-minimal weak upper gradients in the simplified setting of the model weighted interval $I_{K,N}$.  In this setting, for any $p\ge 1$, we say that $u\in W^{1,p}(I,\dist,\meas_{K,N})$ if the distributional derivative of $u$ (defined through integration by parts) is in $L^p(I,\meas_{K,N})$. We refer to \cite[Section 8]{AmbrosioDiMarino} for an account about $W^{1,1}$ spaces on metric measure spaces.

\begin{lemma}\label{prop:identificationonedim}
Let $I\subset I_{K,N}$ be  a sub-interval and let $1<p<+\infty$. Let $f\in W^{1,p}((I,\dist,\meas_{K,N}))$ be  monotone.
Then $f\in W^{1,1} ((I,\dist,\meas_{K,N}))$ and it holds
\begin{equation}\label{eq:N1n}
\abs{\nabla f}_1(x)=\abs{f'}(x)=\abs{\nabla f}(x) \quad\text{for $\leb^1$-a.e. $x\in I$}.
\end{equation}
\end{lemma}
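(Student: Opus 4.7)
The plan is to reduce the statement to classical weighted Sobolev calculus on a Euclidean interval, where the three notions of gradient collapse to one. Since $h_{K,N}$ is $C^{\infty}$ and strictly positive on $I_{K,N}$, on any compact subinterval $J\Subset I$ the measure $\meas_{K,N}$ is comparable to $\leb^{1}$ with continuous density bounded away from $0$ and $\infty$. Hence $W^{1,p}((I,\dist,\meas_{K,N}))$ as defined here (via $L^{p}(\meas_{K,N})$-integrability of the distributional derivative) agrees locally with the classical weighted Sobolev space on the interval. Since $\meas_{K,N}(I)<\infty$, H\"older's inequality yields $f'\in L^{1}(I,\meas_{K,N})$, and therefore $f\in W^{1,1}((I,\dist,\meas_{K,N}))$.

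Next, I would establish $|\nabla f|(x)=|f'(x)|$ for $\leb^{1}$-a.e. $x\in I$. Monotonicity of $f$, combined with $f'\in L^{1}_{\loc}(\leb^{1})$ on compact subintervals of $I$, ensures that $f$ is absolutely continuous on every compact subinterval. Hence $f$ is classically differentiable $\leb^{1}$-a.e., and its classical derivative coincides $\leb^{1}$-a.e. with the distributional one. At any such point of classical differentiability, the definition of metric slope on a one-dimensional interval yields $|\nabla f|(x)=|f'(x)|$ at once.

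Finally, I would identify $|\nabla f|_{1}(x)=|f'(x)|$ $\leb^{1}$-a.e. by computing the relaxed total variation $|Df|_{*}$ directly. On every $J\Subset I$, mollification of $f$ with non-negative kernels yields smooth monotone approximants $f_{n}\in C^{\infty}(J)\subset\LIPloc(J)$ with $f_{n}\to f$ in $L^{1}(J,\meas_{K,N})$ and $f_{n}'\to f'$ in $L^{1}(J,\meas_{K,N})$; for each such $f_{n}$ a direct computation gives $\int_{A}|\nabla f_{n}|\,\di\meas_{K,N}=\int_{A}|f_{n}'|\,\di\meas_{K,N}$ on any open $A\Subset J$, yielding the upper bound $|Df|_{*}(A)\le\int_{A}|f'|\,\di\meas_{K,N}$. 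Conversely, for any $\LIPloc$ competitor $g_{n}\to f$ in $L^{1}$, the classical one-dimensional lower semicontinuity of the weighted $L^{1}$-norm of the derivative gives $\int_{A}|f'|\,\di\meas_{K,N}\le\liminf_{n}\int_{A}|g_{n}'|\,\di\meas_{K,N}$. Outer regularity then extends both bounds to all Borel $A\subset I$, so that $|Df|_{*}=|f'|\meas_{K,N}$ and hence $|\nabla f|_{1}=|f'|$ $\leb^{1}$-a.e.

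The only mildly technical point is the lower semicontinuity estimate invoked above: however, since $h_{K,N}$ is continuous and bounded below by a positive constant on each compact subinterval of $I_{K,N}$, this reduces to the classical lower semicontinuity of the unweighted total variation on a one-dimensional interval, so no essentially new ingredient is required.
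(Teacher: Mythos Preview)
Your proposal is correct, and the first two steps (membership in $W^{1,1}$ via H\"older, and $|\nabla f|=|f'|$ via local absolute continuity and a.e.\ classical differentiability) match the paper's argument essentially verbatim.

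The genuine difference is in the identification $|\nabla f|_{1}=|f'|$. The paper does \emph{not} compute $|Df|_{*}$ by mollification and lower semicontinuity; instead it applies the one-dimensional coarea formula (already recalled as \autoref{thm:coarea}) to $f$ and compares it with the classical change-of-variables formula for monotone absolutely continuous functions. Concretely, for every $\varphi\in C_{c}(I_{K,N})$ one has
\[
\int_I \varphi\, |\nabla f|_{1}\,\di\meas_{K,N}=\int_{f(I)}(\varphi\cdot h_{K,N})(f^{-1}(r))\,\di r=\int_I \varphi\,|f'|\,\di\meas_{K,N},
\]
the first equality being coarea (using that monotonicity makes $f$ invertible off a countable set, so the perimeter measure of $\{f>r\}$ is a weighted Dirac mass at $f^{-1}(r)$) and the second being change of variables. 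Your route is more self-contained---it does not invoke the coarea formula and would work for any $W^{1,1}$ function on a weighted interval, monotone or not---while the paper's route is shorter and recycles machinery already in place; in the paper's version, monotonicity is used a second time (for the invertibility of $f$), whereas in yours it is only used to secure absolute continuity.
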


\begin{proof}
The fact that $f\in W^{1,1} ((I,\dist,\meas_{K,N}))$ follows directly by H\"older inequality, since $\meas_{K,N}(I)\leq 1$. Since $\mm_{K,N}=h_{K,N} \leb^1$ with $h_{K,N}$ locally bounded away from $0$ out of the two end-points of $I_{K,N}$, it follows that $f$ is locally absolutely continuous in the interior of $I_{K,N}$. In particular it is differentiable  $\leb^1$-a.e. and $\abs{\nabla f}(x)=\abs{f'}(x)$ at every differentiability point $x$. We are thus left to show the first equality in \eqref{eq:N1n}.
\\Note that the assumptions ensure that $f$ is invertible onto its image, up to a countable subset of $f(I)$. The coarea formula in the 1-dimensional case reads as
\begin{equation}\label{eq:coarea1f}
\int_I (\varphi \cdot h_{K,N})  \abs{\nabla f}_{1} \di\leb^{1} =\int_I \varphi \abs{\nabla f}_{1}  \di \meas_{K,N}=\int_{f(I)} (\varphi \cdot h_{K,N}) (f^{-1}(r)) \di r,  \quad \forall \varphi\in C_{c}(I_{K,N}).
\end{equation}
On the other hand, the change of variable formula via a monotone absolutely continuous function gives
\begin{equation}\label{eq:coarea1f2}
\int_I (\varphi \cdot h_{K,N})  |f'| \di\leb^{1} =\int_{f(I)} (\varphi \cdot h_{K,N}) (f^{-1}(r)) \di r,  \quad \forall \varphi\in C_{c}(I_{K,N}).
\end{equation}
The combination of \eqref{eq:coarea1f} with \eqref{eq:coarea1f2} then gives the first equality in \eqref{eq:N1n}.
\end{proof}

The following statement should be compared with \cite{Kesavan}, where the study of the monotone rearrangement on domains of $\setR^n$ is performed.

\begin{proposition}\label{thm:polyaszego}
Let $(X,\dist,\meas)$ be an essentially non branching $\CD(K,N)$ space for some $K>0$, $N\in (1,+\infty)$.  Let $\Omega\subset X$ be an open subset and consider  $[0,r]\subset I_{K,N}$ such that $\mm_{K,N}([0,r])=\mm(\Omega)$.  
\\Let $u\in \LIP(\Omega)$ be non-negative and assume that $\abs{\nabla u}_1(x)\neq 0$ for $\meas$-a.e. $x\in\set{u>0}$. 
\\Then $u^*\in \LIP([0,r])$  and  for any $1<p<+\infty$ it holds 
\begin{equation}\label{eq:polyaszego-1}
\int_{\Omega}\abs{\nabla u}_{1}^p\di \meas\ge\int_{0}^{r}\abs{\nabla u^*}_{1}^p\di \meas_{K,N}.
\end{equation}
In particular, it follows that 
\begin{equation}\label{eq:polyaszego}
\int_{\Omega}\abs{\nabla u}^p\di \meas\ge\int_{0}^{r}\abs{\nabla u^*}^p\di \meas_{K,N}.
\end{equation}
\end{proposition}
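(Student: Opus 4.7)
The plan is to combine the coarea formula (\autoref{thm:coarea}) with the Lévy-Gromov isoperimetric inequality (\autoref{thm:LevyGromov}) and the explicit formula for $\mu'$ from \autoref{lemma:derivativedistribution}, then to perform a change of variables so that the resulting bound is recognized as $\int_0^r\abs{\nabla u^*}_1^p\di\meas_{K,N}$. The Lipschitz regularity of $u^*$ is already guaranteed by \autoref{prop:liptolip}, so in particular $u^*\in W^{1,p}([0,r],\meas_{K,N})$ and both sides of \eqref{eq:polyaszego-1} are well-defined.

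Since $\abs{Du}_{*}=\abs{\nabla u}_1\meas$ for Lipschitz $u$, the coarea formula applied to $\abs{\nabla u}_1^{p-1}$ gives
\begin{equation*}
\int_\Omega \abs{\nabla u}_1^p\di\meas \;=\; \int_0^M\bigg(\int\abs{\nabla u}_1^{p-1}\di\Per(\{u>t\})\bigg)\di t,
\end{equation*}
with $M:=\ess\sup u$. Applying coarea once more to the indicator of $\{\abs{\nabla u}_1=0\}$ and using the hypothesis $\meas(\{\abs{\nabla u}_1=0\}\cap\{u>0\})=0$, one sees that $\Per(\{u>t\})(\{\abs{\nabla u}_1=0\})=0$ for $\leb^1$-a.e.\ $t$; for such $t$ Hölder's inequality with exponents $p$ and $p/(p-1)$ (applied to the pair $\abs{\nabla u}_1^{(p-1)/p}$, $\abs{\nabla u}_1^{-(p-1)/p}$) yields
\begin{equation*}
\Per(\{u>t\})^p \;\le\; \bigg(\int\abs{\nabla u}_1^{p-1}\di\Per(\{u>t\})\bigg)\bigg(\int\frac{1}{\abs{\nabla u}_1}\di\Per(\{u>t\})\bigg)^{p-1}.
\end{equation*}
Bounding $\Per(\{u>t\})\ge\mathcal{I}_{K,N}(\mu(t))$ by \autoref{thm:LevyGromov} and identifying the last factor with $(-\mu'(t))^{p-1}$ via \autoref{lemma:derivativedistribution}, we obtain
\begin{equation*}
\int_\Omega\abs{\nabla u}_1^p\di\meas \;\ge\; \int_0^M\frac{\mathcal{I}_{K,N}(\mu(t))^p}{(-\mu'(t))^{p-1}}\di t.
\end{equation*}

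Setting $s=\mu(t)$ (so $t=u^{\#}(s)$ and $(u^{\#})'(s)=1/\mu'(u^{\#}(s))$ $\leb^1$-a.e., since by \autoref{lemma:derivativedistribution} $\mu$ is absolutely continuous, and by the proof of \autoref{prop:liptolip} $u^{\#}$ is locally Lipschitz) and then $s=\meas_{K,N}([0,x])$, together with the identity $\mathcal{I}_{K,N}(\meas_{K,N}([0,x]))=h_{K,N}(x)$ already recorded in the proof of \autoref{prop:liptolip}, gives
\begin{equation*}
\int_0^M\frac{\mathcal{I}_{K,N}(\mu(t))^p}{(-\mu'(t))^{p-1}}\di t \;=\; \int_0^{\meas(\Omega)}\abs{(u^{\#})'(s)}^p\,\mathcal{I}_{K,N}(s)^p\di s \;=\; \int_0^r\abs{(u^*)'(x)}^p h_{K,N}(x)\di x,
\end{equation*}
which by \autoref{prop:identificationonedim} (applicable because $u^*$ is monotone) equals $\int_0^r\abs{\nabla u^*}_1^p\di\meas_{K,N}$. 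This proves \eqref{eq:polyaszego-1}. Inequality \eqref{eq:polyaszego} follows immediately: on $\Omega$ one has $\abs{\nabla u}_1\le\abs{\nabla u}$ $\meas$-a.e.\ (since $\abs{Du}_{*}\le\abs{\nabla u}\meas$ for Lipschitz $u$), while \autoref{prop:identificationonedim} gives $\abs{\nabla u^*}_1=\abs{\nabla u^*}$ $\leb^1$-a.e.\ on $[0,r]$ thanks to monotonicity of $u^*$. The main delicate step is the Hölder argument, where one has to verify $\abs{\nabla u}_1>0$ $\Per(\{u>t\})$-a.e.\ for a.e.\ $t$ (achieved via the coarea formula applied to the indicator of the vanishing set); keeping the two changes of variables straight so that the densities of $\meas_{K,N}$ match is the other bookkeeping point.
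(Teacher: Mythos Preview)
Your proof is correct and follows essentially the same strategy as the paper: coarea plus H\"older plus L\'evy--Gromov to bound $\int_\Omega\abs{\nabla u}_1^p\di\meas$ from below by $\int_0^M\mathcal{I}_{K,N}(\mu(t))^p/(-\mu'(t))^{p-1}\di t$, then an identification of this integral with $\int_0^r\abs{\nabla u^*}_1^p\di\meas_{K,N}$. The only cosmetic differences are that the paper applies H\"older on slabs $\{t-h<u\le t\}$ with respect to $\meas$ and then differentiates (rather than directly at each level with respect to $\Per(\{u>t\})$, which required your extra coarea step to check $\abs{\nabla u}_1>0$ $\Per(\{u>t\})$-a.e.), and that the paper identifies the lower bound by applying coarea and \eqref{eq:derivativedistributionfunction} to $u^*$ rather than by your two explicit changes of variables.
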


\begin{proof}
Denote by $M:=\sup u$.
Since $u$ is Lipschitz we are in position to apply \autoref{prop:liptolip}, which grants that the monotone rearrangement $u^*$ is still Lipschitz.
\\Introduce the functions $\varphi,\psi:[0,M]\to[0,+\infty)$ defined by
\begin{equation*}
\varphi(t):=\int_{\{u>t\}}\abs{\nabla u}_1^p\di \meas,\quad \psi(t):=\int_{\{u>t\}}\abs{\nabla u}_1\di \meas.
\end{equation*}
An application of the coarea formula \autoref{thm:coarea} yields that $\varphi$ and $\psi$ are absolutely continuous and therefore $\leb^1$-a.e. differentiable with derivatives given $\leb^1$-a.e. by the expressions
\begin{equation*}
\varphi'(t)=-\int\abs{\nabla u}_1^{p-1}\di\Per(\{u> t\})\text{ and } \psi'(t)=-\Per(\{u> t \}),
\end{equation*}
respectively.
An application of H\"older's inequality yields that for any $0\le t-h\le t\le M$
\begin{equation}\label{eq:holderestimate}
\int_{\{t-h< u\le t \}}\abs{\nabla u}_1\di \meas\le \left(\int_{\{t-h< u\le t\}}\abs{\nabla u}_1^p\di \meas\right)^{\frac{1}{p}}\left(\mu(t-h)-\mu(t)\right)^{\frac{p-1}{p}},
\end{equation}
where $\mu$ denotes the distribution function associated to $u$.
It follows from the discussion above and from \autoref{lemma:derivativedistribution} that $\leb^1$-a.e. point $t\in(0,M)$ is a differentiability point of both $\mu,\varphi$ and $\psi$. In view of \eqref{eq:holderestimate}, at any such point it holds that 
\begin{equation}\label{eq:estimatepolya}
-\psi'(t)\le \left(-\varphi'(t)\right)^{\frac{1}{p}}\left(-\mu'(t)\right)^{\frac{p-1}{p}}.
\end{equation}
Applying the L\'evy-Gromov inequality \autoref{thm:LevyGromov} we obtain that $\Per(\{u> t\})\ge \mathcal{I}_{K,N}(\mu(t))$. Therefore, taking into account the strict monotonicity of $\mu$, \eqref{eq:estimatepolya} turns into
\begin{equation}\label{eq:levyapplied}
-\varphi'(t)\ge\frac{\left(\mathcal{I}_{K,N}(\mu(t))\right)^p}{\left(-\mu'(t)\right)^{p-1}}\quad \text{for $\leb^1$-a.e. $t$}.
\end{equation}
Thus
\begin{equation}\label{eq:boundfrombelow}
\int_{\Omega}\abs{\nabla u}_1^p\di \meas=\int_0^M-\varphi'(t)\di t\ge\int_0^M\frac{\left(\mathcal{I}_{K,N}(\mu(t))\right)^p}{\left(-\mu'(t)\right)^{p-1}}\di t.
\end{equation}
It follows from the very definition of the monotone rearrangement and from the properties of the model isoperimetric profile that $\Per(\{u^*> t\})=\mathcal{I}_{K,N}(\mu(t))$ (recall that $u$ and $u^*$ have the same distribution function). Moreover, since we already know that $u^*$ is Lipschitz, we are in  position to apply \autoref{lemma:derivativedistribution} to conclude (taking also into account \autoref{prop:identificationonedim}) that

\begin{equation}\label{eq:derdistr}
-\mu'(t)=\frac{\Per(\{u^*> t\})}{\abs{(u^*)'((u^*)^{-1}(t))}} \quad\text{for $\leb^1$-a.e. $t$}.
\end{equation}
Applying the coarea formula to the function $u^*$ and taking into account \eqref{eq:derdistr} and \autoref{prop:identificationonedim}  we conclude that
\begin{align}\label{eq:expressiononedim}
\nonumber    \int_{0}^{r}\abs{\nabla u^*}_{1}^p\di \meas_{K,N} = \int_{0}^{r} \abs{(u^*)'}^p\di \meas_{K,N} &=\int_{0}^{ \sup u^*}\abs{(u^*)'((u^*)^{-1}(t))}^{p-1}\Per(\{u^*> t\})\di t\\
&=\int_{0}^{ \sup u^*}\frac{\left(\mathcal{I}_{K,N}(\mu(t))\right)^p}{\left(-\mu'(t)\right)^{p-1}}\di t.
\end{align}
Comparing \eqref{eq:boundfrombelow} with \eqref{eq:expressiononedim} we can conclude that
\begin{equation*}
\int_{\Omega}\abs{\nabla u}_1^p\di \meas\ge  \int_{0}^{r}\abs{\nabla u^*}_{1}^p\di \meas_{K,N}=\int_{0}^{r}\abs{\nabla u^*}^p\di \meas_{K,N},
\end{equation*}
giving \eqref{eq:polyaszego-1}.
To get \eqref{eq:polyaszego} it suffices to recall that $\abs{\nabla u}_1\le\abs{\nabla u}$ holds true  $\meas$-a.e..
\end{proof}

Armed with \autoref{thm:polyaszego} we can extend the celebrated Polya-Szego inequality to the non-smooth $\CD(K,N)$  framework.
\medskip

\textbf{Proof of \autoref{cor:symmapssobolevintosobolev}}.
\\By the very definition of $W^{1,p}_0(\Omega)$ we can find a sequence $(u_n)_{n\in\setN}$ with $u_n\in\LIPc(\Omega)$ for any $n\in\setN$ and $u_n$ converging to $u$ in $W^{1,p}(X,\dist,\meas)$, Moreover, thanks to \autoref{lemma:approximationwithnonvanishing}, we can assume that $\abs{\nabla u_n}_1\neq0$ for $\meas$-a.e. $x\in\set{u_n>0}$ for any $n\in\setN$, so that we can apply \autoref{thm:polyaszego} to each of the functions $u_n$ obtaining
\begin{equation}\label{eq:apppolya}
\int_{0}^{r}\abs{\nabla u^*_n}^p\di\meas_{K,N}\le \int_{\Omega}\abs{\nabla u_n}^p\di\meas.
\end{equation}
Observe now that the strong $L^p(X,\meas)$-convergence of $u_n$ to $u$ and the strong $L^p$-continuity of the monotone rearrangement (see \autoref{prop:equimeasurability}) grant that $u_n^{*}\to u^{*}$ in $L^{p}([0,r],\meas_{K,N})$. From the lower semicontinuity of the $p$-energy w.r.t. $L^{p}([0,r],\meas_{K,N})$-convergence it follows that:
\begin{equation*}
\int_{0}^{r} \abs{\nabla u^*}^p\di\meas_{K,N}\le\liminf_{n\to\infty}\int_{0}^{r} \abs{\nabla u^*_n}^p\di\meas_{K,N}.
\end{equation*}
Hence, taking into account \eqref{eq:apppolya} and the strong convergence in $W^{1,p}(X,\dist,\meas)$ of $u_n$ to $u$, we conclude that
\begin{equation*}
\int_{0}^{r}\abs{\nabla u^*}^p\di\meas_{K,N}\le \int_{\Omega}\abs{\nabla u}_w^p\di\meas,
\end{equation*}
which is the desired conclusion. 
\hfill$\Box$
\bigskip

In the following we will need an improved version of the Polya-Szego inequality. To this aim, for any non-negative $u\in W^{1,p}_0(\Omega)$ we introduce a function $f_u:[0,\sup u^*)\to[0,+\infty]$ by 
\begin{equation}\label{eq:deffu}
f_u(t):=\int\abs{\nabla u^*}^{p-1}\di\Per(\{u^*> t\}).
\end{equation}
Observe that this definition makes sense thanks to \autoref{cor:symmapssobolevintosobolev} and the coarea formula, which also yields
\begin{equation}\label{eq:fuCoarea}
\int_0^{\sup{u^*}}f_u(t)\di t=\int_{0}^{r}\abs{\nabla u^*}^p\di\meas_{K,N},
\end{equation}
for any $u\in W^{1,p}_0(\Omega)$.

We are now in position to state and prove our improved Polya-Szego inequalities.

\begin{proposition}[Improved Polya-Szego Inequalities]\label{prop:extensionofformulae}
Let $(X,\dist,\meas)$ be an essentially non branching $\CD(K,N)$ space for some $K>0$, $N\in (1,+\infty)$.  Let $\Omega\subset X$ be an open subset and consider  $[0,r]\subset I_{K,N}$ such that $\mm_{K,N}([0,r])=\mm(\Omega)$. 
\\Suppose that $u\in W^{1,p}_0(\Omega)$ is such that $u^{*}$ has non vanishing derivative $\leb^1$-a.e. on $(0,r)$. Then 
\begin{equation}\label{eq:improvedpolyawithper}
\int_{\Omega}\abs{\nabla u}^p\di\meas\ge \int_0^{\sup u^*}\left(\frac{\Per(\{u> t\})}{\mathcal{I}_{K,N}(\mu(t))}\right)^pf_u(t)\di t.
\end{equation}
As a consequence, under the same assumptions, it holds that
\begin{equation}\label{eq:improvedpolyawithprofile}
\int_{\Omega}\abs{\nabla u}^p\di\meas\ge\int_0^{\sup u^*}\left(\frac{\mathcal{I}_{(X,\dist,\meas)}(\mu(t))}{\mathcal{I}_{K,N}(\mu(t))}\right)^p f_u(t)\di t.
\end{equation}

\end{proposition}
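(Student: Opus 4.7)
The plan is to revisit the proof of \autoref{thm:polyaszego} and refrain from invoking the Lévy--Gromov comparison at the end. The second inequality \eqref{eq:improvedpolyawithprofile} will then follow at once from \eqref{eq:improvedpolyawithper} by the trivial bound $\Per(\{u>t\})\ge\mathcal{I}_{(X,\dist,\meas)}(\mu(t))$ coming from the very definition of the isoperimetric profile.

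First I would prove \eqref{eq:improvedpolyawithper} under the additional assumption $u\in\LIPc(\Omega)$ with $\abs{\nabla u}_1\neq 0$ $\meas$-a.e.\ on $\set{u>0}$. With the notation of the proof of \autoref{thm:polyaszego}, the chain Hölder $+$ coarea produces, at $\leb^1$-a.e.\ $t\in(0,\sup u)$,
\begin{equation*}
-\varphi'(t)\ge\frac{\Per(\{u>t\})^p}{(-\mu'(t))^{p-1}}.
\end{equation*}
Instead of dominating the numerator via \autoref{thm:LevyGromov}, I would apply \autoref{lemma:derivativedistribution} to the rearrangement $u^*$, which is Lipschitz by \autoref{prop:liptolip} and has non-vanishing derivative by hypothesis. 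In dimension one the perimeter of $\{u^*>t\}$ reduces to a Dirac mass at $(u^*)^{-1}(t)$ of weight $h_{K,N}((u^*)^{-1}(t))=\mathcal{I}_{K,N}(\mu(t))$, so combining with \autoref{prop:identificationonedim} yields
\begin{equation*}
-\mu'(t)=\frac{\mathcal{I}_{K,N}(\mu(t))}{\abs{(u^*)'((u^*)^{-1}(t))}},\qquad f_u(t)=\abs{(u^*)'((u^*)^{-1}(t))}^{p-1}\mathcal{I}_{K,N}(\mu(t)).
\end{equation*}
Substituting back one finds $-\varphi'(t)\ge(\Per(\{u>t\})/\mathcal{I}_{K,N}(\mu(t)))^p f_u(t)$; integration on $(0,\sup u^*)$ and the bound $\abs{\nabla u}\ge\abs{\nabla u}_1$ give \eqref{eq:improvedpolyawithper} in this case.

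To handle a general $u\in W^{1,p}_0(\Omega)$ satisfying the hypothesis on $u^*$, I would approximate as in the proof of \autoref{cor:symmapssobolevintosobolev}: pick $u_n\in\LIPc(\Omega)$ with $\abs{\nabla u_n}_1\neq 0$ $\meas$-a.e.\ on $\set{u_n>0}$ and $u_n\to u$ in $W^{1,p}$ by \autoref{lemma:approximationwithnonvanishing} and \autoref{cor:approximationnonvanishing1}. The Lipschitz version applies to each $u_n$; the left-hand side converges by strong $W^{1,p}$-convergence, while the right-hand side should be controlled by taking a $\liminf$ inside the integral. The factor $(\Per(\{u_n>t\})/\mathcal{I}_{K,N}(\mu_n(t)))^p$ is controlled from below at $\leb^1$-a.e.\ $t$ by the $L^1$-lower semicontinuity of the perimeter (using $\chi_{\{u_n>t\}}\to\chi_{\{u>t\}}$ in $L^1$) combined with the continuity of $\mathcal{I}_{K,N}$.

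The main obstacle is the limiting behaviour of $f_{u_n}(t)$, which involves $(u_n^*)'$ evaluated at $(u_n^*)^{-1}(t)$: the mere $L^p$-convergence $u_n^*\to u^*$ granted by \autoref{prop:equimeasurability} does not suffice. The cleanest way forward is to rewrite the right-hand side through the change of variable $x=(u_n^*)^{-1}(t)$ as in \eqref{eq:fuCoarea}, turning the integrand into a weighted 1D $p$-Cheeger density for $u_n^*$ with weight $(\Per(\{u_n>u_n^*(x)\})/h_{K,N}(x))^p h_{K,N}(x)$. One then exploits the weak $W^{1,p}$-convergence $u_n^*\rightharpoonup u^*$ (which follows from the $L^p$-convergence and the uniform $W^{1,p}$-bound provided by \autoref{cor:symmapssobolevintosobolev}) together with a joint lower-semicontinuity argument for the weight, obtained again via the $L^1$-lsc of the perimeter along the sequence of super-level sets. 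This second step is delicate precisely because the weight itself depends on $n$.
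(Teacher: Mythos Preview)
Your Lipschitz step is fine and coincides with what the paper does. The divergence is in how you treat general $u\in W^{1,p}_0(\Omega)$: you propose to approximate by $u_n\in\LIPc(\Omega)$ and pass to the limit in the right-hand side, whereas the paper works \emph{directly} with $u$ and never approximates.

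The paper's point is that the coarea/H\"older chain at the beginning of the proof of \autoref{thm:polyaszego} does not actually require $u$ to be Lipschitz: any $u\in W^{1,p}_0(\Omega)$ lies in $\BV_*(\Omega)$, so the coarea formula applies to $u$ itself and yields, at $\leb^1$-a.e.\ $t$,
\[
-\varphi'(t)\ \ge\ \frac{\Per(\{u>t\})^p}{(-\mu'(t))^{p-1}}.
\]
The only missing ingredient is the absolute continuity of $\mu$ together with the formula $-\mu'(t)=\mathcal{I}_{K,N}(\mu(t))/|(u^*)'((u^*)^{-1}(t))|$. These come from applying \autoref{lemma:derivativedistribution} to $u^*$ (not to $u$): the hypothesis of the proposition says exactly that $|(u^*)'|\neq 0$ $\leb^1$-a.e., and \autoref{cor:symmapssobolevintosobolev} gives $u^*\in W^{1,p}$, hence $\AC_{\loc}$ on the interval, so the one-dimensional version of the lemma applies. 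Substituting and integrating finishes the proof in one stroke.

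By contrast, your approximation route runs into the very obstacle you flag at the end. After the change of variable the right-hand side for $u_n$ becomes $\int_0^{r} w_n(x)\,|(u_n^*)'(x)|^p\,\di\meas_{K,N}$ with an $n$-dependent weight $w_n\ge 1$. You only know $u_n^*\to u^*$ in $L^p$ and (via Polya--Szego for the $u_n$) a uniform $W^{1,p}$ bound, hence at best weak $W^{1,p}$-convergence of $u_n^*$; there is no reason for $\int|(u_n^*)'|^p\di\meas_{K,N}$ to converge to $\int|(u^*)'|^p\di\meas_{K,N}$ unless $u$ itself achieves equality in Polya--Szego. Joint lower semicontinuity of a weighted $p$-energy with only weak convergence of the gradients and pointwise $\liminf$ control on the weight is false in general, so this step cannot be completed as written. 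The fix is not to patch the limit argument but to drop the approximation altogether and argue directly, as the paper does.
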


\begin{proof}
In order to prove \eqref{eq:improvedpolyawithper} we just need to observe that our assumptions, even though being weaker than those of \autoref{thm:polyaszego}, put us in position to make its proof work.

Indeed, with the same notation therein introduced, we observe that the monotone rearrangement $u^*$ has the same distribution function of $u$. Moreover, \autoref{cor:symmapssobolevintosobolev} implies in particular that $u^{*}\in \AC_{\loc}((0,r))$.  Therefore, since we are assuming that $\abs{\nabla u^*}(t)\neq 0$ for $\leb^1$-a.e. $t$, it follows from \autoref{lemma:derivativedistribution} (taking into account also \autoref{prop:identificationonedim}) that $\mu$ is absolutely continuous and therefore differentiable $\leb^1$-a.e. with the explicit expression for the derivative given (for $\leb^1$-a.e. $t$) by
\begin{equation}\label{eq:derivativedistributionminimal}
-\mu'(t)=\frac{\Per(\{u^*> t\})}{\abs{\nabla u^*}((u^*)^{-1}(t))}=\frac{\mathcal{I}_{K,N}(\mu(t))}{\abs{\nabla u^*}((u^*)^{-1}(t))},
\end{equation}
where the second equality is a consequence of the very construction of the monotone rearrangement. 

Following verbatim the beginning of the proof of \autoref{thm:polyaszego} we obtain that \eqref{eq:levyapplied} is still valid in the present setting. Taking into account \eqref{eq:derivativedistributionminimal} we obtain that
\begin{align*}
-\varphi'(t)\ge\frac{\left(\Per(\{u> t\})\right)^p}{(-\mu'(t))^{p-1}}=&\frac{\left(\Per(\{u> t\})\right)^p}{\left(\mathcal{I}_{K,N}(\mu(t))\right)^{p-1}}\abs{\nabla u^*}((u^*)^{-1}(t))^{p-1}\\
=&\left[ \frac{\Per(\{u> t\})}{\mathcal{I}_{K,N}(\mu(t))}\right]^{p} \abs{\nabla u^*}((u^*)^{-1}(t))^{p-1}\mathcal{I}_{K,N}(\mu(t))\\
=&\left[ \frac{\Per(\{u> t\})}{\mathcal{I}_{K,N}(\mu(t))}\right]^{p}f_u(t)
\end{align*}
for $\leb^1$-a.e. $t\in(0,\sup u^*)$.
The desired inequality \eqref{eq:improvedpolyawithper} follows now recalling that
\begin{equation*}
\int_{\Omega}\abs{\nabla u}^p\di\meas=\int_0^{\sup u^*}(-\varphi'(t))\di t.
\end{equation*}

Conclusion \eqref{eq:improvedpolyawithprofile} is a consequence of \eqref{eq:improvedpolyawithper} after observing that $\{u> t\}$ is an admissible competitor in the definition of $\mathcal{I}_{(X,\dist,\meas)}(\mu(t))$ since, by definition, it holds that $\meas(\{u> t\})=\mu(t)$.
\end{proof}

\begin{remark}\label{rm:extensionofusefulformula}
In order to prove the forthcoming \autoref{thm:rigidityspectral} we will need to slightly enlarge the class of functions where \eqref{eq:improvedpolyawithper} and \eqref{eq:improvedpolyawithprofile} hold true.

In particular, we claim that \eqref{eq:improvedpolyawithper} holds true for any  $u\in W^{1,p}_0(\Omega)$ such that $u^*$ is $C^1$ and strictly decreasing. Indeed for any such $u$ it holds that the set of critical values of $u^*$ is $\leb^1$-negligible. Moreover, the distribution function $\mu$ of $u$ (which coincides with the distribution function of $u^*$ by equimeasurability, as we already observed), is differentiable at any regular point of $u^*$, with derivative given by  \eqref{eq:derivativedistributionminimal}. Hence the whole proof of \autoref{prop:extensionofformulae} can be carried over without modifications.

\end{remark}

\section{Spectral gap with Dirichlet boundary conditions}\label{sec:spectralgap}

\subsection{B\'erard-Meyer for essentially non-branching $\CD(K,N)$ spaces}
The goal of this section is to bound from below the $p$-spectral gap of an essentially non branching $\CD(K,N)$  space with the one of the corresponding one dimensional model space,
 for any $K>0, N\in (1,+\infty)$ and $p\in (1,+\infty)$. This extends to the non-smooth setting  the celebrated result of B\'erard-Meyer \cite{BerardMeyer} (see also \cite{Matei00}) proved for smooth Riemannian manifolds with $\Ric\geq K, K>0$.
\medskip

For every $K>0, N\in (1,+\infty)$, let $(I_{K,N}, \sfd_{eu}, \meas_{K,N})$ be the one dimensional model space defined in \eqref{eq:defonedim}. For every $v\in (0,1)$, let $r(v)\in I_{K,N}$ be such that $v=\meas_{K,N}([0,r(v)])$.
\\To let the notation be more compact, for any fixed $1<p<+\infty$, for any $v\in(0,1)$ and for any choice of $K>0$ and $1<N<+\infty$, we define
\begin{equation*}
\lambda^p_{K,N,v}:=\inf\left\lbrace\frac{\int_0^{r(v)} \abs{u'}^p\di \meas_{K,N}}{\int_0^{r(v)}u^p\di \meas_{K,N}}:\quad u\in\LIP([0,r(v)];[0,+\infty)),\quad u(r(v))=0 \text{ and $u\not\equiv 0 $} \right\rbrace, 
\end{equation*}
and we call $\lambda^p_{K,N,v}$ the \emph{comparison first eigenvalue for the $p$-Laplacian with Dirichlet boundary conditions for Ricci curvature bounded from below by $K$, dimension bounded from above by $N$ and volume $v$}.

Moreover, for any metric measure space $(X,\dist,\meas)$ with $\meas(X)=1$, for any open subset $\Omega\subset X$ and for any $1<p<+\infty$, we define

\begin{equation}\label{eq:firsteigenvalue}
\lambda_{X}^p(\Omega):=\inf\left\lbrace\frac{\int_{\Omega} \abs{\nabla u}^p\di \meas}{\int_{\Omega}u^p\di \meas}:\quad u\in\LIPc(\Omega;[0,+\infty))\text{ and $u\not\equiv 0$}\right\rbrace,
\end{equation}
and we call $\lambda^p_{X}(\Omega)$ the  \emph{first eigenvalue of the $p$-Laplacian on $\Omega$ with Dirichlet boundary conditions}.
\\Observe that for any $2\leq N\in\setN$ and $K>0$,  $\lambda^p_{K,N,v}=\lambda^{p}_{\mathbb{S}^{N}_{K}}(B_{v})$, where  $ \mathbb{S}^{N}_{K}$ is the round $N$-dimensional sphere or radius $\sqrt{\frac{N-1}{K}}$ and  $B_{v}\subset \mathbb{S}^{N}_{K}$ is a metric ball (i.e. a spherical cap) with  volume $v$.
\\We are now in position to prove \autoref{thm:pSpectr}
\bigskip

\textbf{Proof of \autoref{thm:pSpectr}}
\\For any $u\in\LIPc(\Omega;[0,+\infty))$ not identically zero we introduce the notation
\begin{equation*}
\mathcal{R}_p(u):=\frac{\int_{\Omega}\abs{\nabla u}^p\di \meas}{\int_{\Omega}u^p\di \meas}
\end{equation*}
for the Rayleigh quotient of $u$.

It follows from the combination of \autoref{prop:equimeasurability} and  \autoref{thm:polyaszego} that for any $u\in\LIPc(\Omega;[0,+\infty))$ such that $\abs{\nabla u}_1\neq 0$ $\meas$-a.e. on $\set{u>0}$ it holds
\begin{equation*}
\mathcal{R}_p(u)\ge\mathcal{R}_p(u^*),
\end{equation*}
where $u^*:[0, r(v)]\to[0,+\infty)$ is the monotone rearrangement of $u$ on the model space. Observe now that $u\in \LIP_{c}(\Omega)$ implies,  by construction of the monotone rearrangement, that $u^*$ vanishes at $r(v)$. We thus  get
\begin{equation*}
\mathcal{R}_p(u^*)\ge\lambda^p_{K,N,v}.
\end{equation*}
The desired conclusion follows from \autoref{cor:approximationnonvanishing1} and \autoref{rem:CDPoincare} which grant that for any $u\in\LIPc(\Omega;[0,+\infty))$ we can find a sequence $(u_n)_{n\in\setN}\subset \LIPc(\Omega;[0,+\infty))$ such that $\abs{\nabla u_n}_1\neq 0$ $\meas$-a.e. on $\set{u_n>0}$ for any $n\in\setN$ and 
\begin{equation*}
\mathcal{R}_p(u_n)\to\mathcal{R}_p(u),\quad \text{as $n\to\infty$}.
\end{equation*}
\hfill$\Box$

\subsection{Existence of minimizers}\label{sec:existenceofminimizers}
Here we collect some known result about the $p$-Laplace equation with homogeneous Dirichlet boundary conditions on metric measure spaces (verifying the curvature dimension condition) that will be useful in the next section about rigidity. We refer to \cite{LatvalaMarolaPere} and \cite{GigliMondino13} for a more detailed discussion about this topic and equivalent characterizations of first eigenfunctions.

Recall that we defined $W^{1,p}_0(\Omega)$ to be the closure w.r.t. the $W^{1,p}$-norm of $\LIPc(\Omega)$ (see \autoref{def:sobolevlocal}).
In the fairly general context of metric measure spaces it makes sense to talk about the first eigenfunction of the $p$-Laplace equation if the notion is understood in the following weak sense.

\begin{definition}[First eigenfunction]
Let $\Omega\subset X$ be an open domain. We say that $u\in W^{1,p}_0(\Omega)$ is a first eigenfunction of the $p$-Laplacian on $\Omega$ (with homogeneous Dirichlet boundary conditions) if $u\not\equiv 0$ and it minimizes the Rayleigh quotient
\begin{equation*}
\mathcal{R}_p(v)=\frac{\int_{\Omega}\abs{\nabla v}_w^p\di\meas}{\int_{\Omega}\abs{v}^p\di\meas},
\end{equation*}  
among all functions $v\in W^{1,p}_0(\Omega)$ such that $v\not\equiv 0$.
\end{definition}

\begin{remark}
Let us observe that if $u\in W^{1,p}_0(\Omega)$ is a first eigenfunction of the $p$-Laplacian then $\mathcal{R}_p(u)=\lambda^p_{X}(\Omega)$ (that is the first eigenvalue of the $p$-Laplace equation defined in \eqref{eq:firsteigenvalue}), since by the very definition of the space $W^{1,p}_0(\Omega)$ it makes no difference to minimize the Rayleigh quotient over $\LIPc(\Omega)$ or over $W^{1,p}_0(\Omega)$. As we will see below, the advantage of considering the minimization  over $W^{1,p}_0(\Omega)$ is to gain existence of minimizers.
\end{remark}

We conclude this short section with a general existence result for first eigenfunctions of the $p$-laplacian. The main ingredient for its proof, as in the smooth case, is the Sobolev inequality which implies in turn that also Rellich compactness theorem holds true in this setting. A good reference for this part is \cite[Chapter 5]{AmbrosioTilli04}.

\begin{theorem}[Existence of minimizers]\label{thm:existenceofminimizers}
Let $(X,\dist,\meas)$ be an essentially non branching $\CD(K,N)$ space, for some $K>0$ and $1<N<+\infty$.
Let $\Omega\subset X$ be an open subset, fix $1<p<+\infty$, and assume that $\lambda^p_{X}(\Omega)<+\infty$. 
\\Then  there exists a first eigenfunction of the $p$-Laplace equation (with homogeneous Dirichlet boundary conditions) on $\Omega$.
\end{theorem}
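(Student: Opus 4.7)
\smallskip

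\textbf{Proof proposal.} The plan is to implement the direct method of the calculus of variations. Take a minimizing sequence $(u_n)_{n\in\setN}\subset W^{1,p}_0(\Omega)$ for the Rayleigh quotient $\mathcal{R}_p$, normalized by $\int_\Omega |u_n|^p\,\di\meas=1$, so that
\[
\int_\Omega |\nabla u_n|_w^p\,\di\meas\longrightarrow \lambda^p_X(\Omega)<+\infty.
\]
Since the sequence is bounded in $W^{1,p}(X,\dist,\meas)$, the goal is to extract a subsequence converging to a limit $u$ which still lies in $W^{1,p}_0(\Omega)$, satisfies $\int_\Omega |u|^p\,\di\meas=1$, and for which $\int_\Omega |\nabla u|_w^p\,\di\meas\le \lambda^p_X(\Omega)$.

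The key structural ingredients are available under the working hypotheses. Since $(X,\dist,\meas)$ is $\CD(K,N)$ with $K>0$ and $N<\infty$, the Bonnet--Myers type bound forces $(X,\dist)$ to be compact with finite measure. Moreover, by \autoref{rem:CDPoincare}, the space is locally doubling and supports a weak $(1,1)$-Poincar\'e inequality; combined with the $\CD(K,N)$ Sobolev inequality (or equivalently the Cheeger-type analysis in doubling $+$ Poincar\'e settings), this gives reflexivity of $W^{1,p}(X,\dist,\meas)$ for $1<p<+\infty$ and a Rellich--Kondrachov compactness statement: bounded sequences in $W^{1,p}$ are precompact in $L^p(X,\meas)$.

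Applying these tools, up to a (non-relabelled) subsequence, $u_n \to u$ strongly in $L^p(X,\meas)$ and $u_n \weakto u$ weakly in $W^{1,p}(X,\dist,\meas)$. Strong $L^p$-convergence yields $\int_\Omega |u|^p\,\di\meas=1$, so in particular $u\not\equiv 0$. The $L^p(X,\meas)$-lower semicontinuity of the $p$-Cheeger energy from \autoref{def:sobolev}, together with Mazur's lemma applied to the weak $W^{1,p}$-convergence, gives
\[
\int_\Omega |\nabla u|_w^p\,\di\meas \le \liminf_{n\to\infty}\int_\Omega |\nabla u_n|_w^p\,\di\meas = \lambda^p_X(\Omega),
\]
and combined with the definition of $\lambda^p_X(\Omega)$ this forces equality, so $u$ attains the infimum.

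The step I expect to require the most care is verifying that the limit $u$ belongs to $W^{1,p}_0(\Omega)$ (and not merely to $W^{1,p}(X,\dist,\meas)$). The definition gives $W^{1,p}_0(\Omega)$ as the strong $W^{1,p}$-closure of $\LIPc(\Omega)$, hence as a closed linear subspace of the reflexive Banach space $W^{1,p}(X,\dist,\meas)$; as such, it is also weakly closed by Mazur's theorem. Since each $u_n$ already lies in $W^{1,p}_0(\Omega)$ and $u_n \weakto u$ in $W^{1,p}$, we conclude $u\in W^{1,p}_0(\Omega)$, completing the proof. If one wishes to avoid appealing to reflexivity of $W^{1,p}(X,\dist,\meas)$, an alternative is to diagonalize: for each $n$ choose $v_n\in \LIPc(\Omega)$ with $\|v_n-u_n\|_{W^{1,p}}<1/n$, run the compactness argument on $(v_n)$, and recover the same conclusion directly from the definition of $W^{1,p}_0(\Omega)$.
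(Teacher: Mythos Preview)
Your proposal is correct and follows essentially the same approach as the paper's proof: the direct method via a normalized minimizing sequence, Rellich--Kondrachov compactness (which the paper cites from \cite[Theorem 5.4.3]{AmbrosioTilli04}), and $L^p$-lower semicontinuity of the Cheeger energy. The paper is terser about why the limit lies in $W^{1,p}_0(\Omega)$, whereas you spell this out via reflexivity and weak closedness of the closed subspace $W^{1,p}_0(\Omega)$; this extra care is justified and the argument is sound.
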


\begin{proof}
If $\lambda^p_{X}(\Omega)<+\infty$, we can find a sequence $(u_n)_{n\in\setN} \subset W^{1,p}_{0}(\Omega)$ such that $\norm{u_n}_{L^p}=1$ for any $n\in\setN$ and $\norm{\abs{\nabla u_n}_w}^p_{L^p}\to \lambda^p_{X}(\Omega)$ as $n\to\infty$.

Recall that the $\CD(K,N)$ condition for $K>0$ and $1<N<+\infty$ grants that $X$ is a compact and doubling metric measure space. Hence we can apply \cite[Theorem 5.4.3]{AmbrosioTilli04} (which is a general version of Rellich theorem for metric measure spaces) to the sequence $(u_n)_{n\in\setN}$ to find a limit function $u\in W^{1,p}_0(\Omega)$ such that $u_n\to u$ in $L^{p}(\Omega,\meas)$ as $n\to\infty$ and hence $\norm{u}_{L^p}=1$. It follows from the lower semicontinuity of the $p$-energy w.r.t. $L^{p}(\Omega,\meas)$-convergence that 
\begin{equation*}
\int_{\Omega}\abs{\nabla u}_w^p\di\meas\le\liminf_{n\to\infty}\int_{\Omega}\abs{\nabla u_n}_w^{p}\di\meas=\lambda^p_{X}(\Omega),
\end{equation*}
thus $u$ is a first eigenfunction of the $p$-laplacian with homogeneous Dirichlet boundary conditions on $\Omega$.
\end{proof}

\begin{remark}
Let us remark for sake of completeness that the definition of Sobolev space adopted in \cite{AmbrosioTilli04} is different with respect to the working one of this paper. However, as a consequence of \cite[Lemma 8.2]{AmbrosioColomboDiMarino}, if $(X,\dist,\meas)$ is an essentially non branching $\CD(K,N)$ m.m.s. and $f\in W^{1,p}(X,\dist,\meas)$ according to \autoref{def:sobolev}, then $f$ is a Sobolev function according to \cite[Definition 5.1.1]{AmbrosioTilli04} 
\end{remark}

\section{Rigidity}

\subsection{Rigidity in the Polya-Szego inequality}
This section is devoted to prove a rigidity statement associated to the  Polya-Szego inequality \autoref{thm:polyaszego}. The rough idea here is that if equality occurs in the Polya-Szego inequality then it occurs in the L\'evy-Gromov inequality too, and hence one can build on top of the rigidity statements in the L\'evy-Gromov isoperimetric inequality established in  \cite{CavallettiMondino17,CavallettiMondino18}. Let us also mention the paper \cite{FeroneVolpicelli}, where an elementary proof of the rigidity statement for the Polya-Szego inequality on $\setR^n$ is presented.

\begin{theorem}[Rigidity in the Polya-Szego inequality]\label{thm:RigPolSz} Let $(X,\dist,\meas)$ be an $\RCD(N-1,N)$ space for some $N\in[2,+\infty)$ with $\meas(X)=1$.  
\\Assume that there exists a  nonnegative function $u\in\LIP(X)$ achieving equality  in the Polya-Szego inequality \eqref{eq:polyaszego},  with $\abs{\nabla u}(x)\neq 0$ for $\meas$-a.e. $x\in\supp(u)$.

 Then $(X,\dist,\meas)$ is a spherical suspension, namely there exists an $\RCD(N-2,N-1)$ space $(Y,\dist_Y,\meas_Y)$ with $\meas_Y(Y)=1$ such that $(X,\dist,\meas)$ is isomorphic as a metric measure space to $[0,\pi]\times_{\sin}^{N-1}Y$.
  Moreover $u$ is radial, i.e.  $u=u^*(\dist(\cdot,x_{0}))$, where $x_{0}$ is a tip of a spherical suspension structure of $X$.
\end{theorem}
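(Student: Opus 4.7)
The plan is to extract L\'evy-Gromov equality for the super-level sets of $u$ from the equality case of the Polya-Szego inequality, and then invoke the rigidity in L\'evy-Gromov (\autoref{thm:rigiditylevy}) to deduce both the spherical suspension structure of $(X,\dist,\meas)$ and the radial shape of $u$.

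\textbf{Step 1: Equality in L\'evy-Gromov for super-level sets.} Since $u \in \LIP(X)$ is nonnegative with $\abs{\nabla u} \neq 0$ $\meas$-a.e.\ on $\supp(u)$, the setup of \autoref{thm:polyaszego} applies verbatim: by \autoref{lemma:derivativedistribution} the distribution function $\mu$ is absolutely continuous and strictly decreasing on $(0, \sup u)$, and by \autoref{prop:liptolip} the rearrangement $u^*$ is Lipschitz. The key is to revisit the chain of inequalities leading to \eqref{eq:polyaszego}, namely the H\"older estimate \eqref{eq:holderestimate}, its reformulation \eqref{eq:estimatepolya}, and the application of L\'evy-Gromov in \eqref{eq:levyapplied}. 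Equality in \eqref{eq:polyaszego} forces each of these intermediate inequalities to saturate for $\leb^1$-a.e.\ $t \in (0, \sup u)$. In particular, the L\'evy-Gromov inequality applied to the super-level set $\{u > t\}$ must be an equality:
\begin{equation*}
\Per(\{u > t\}) = \mathcal{I}_{N-1,N}(\mu(t)) \qquad \text{for $\leb^{1}$-a.e.\ $t \in (0, \sup u)$.}
\end{equation*}

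\textbf{Step 2: Spherical suspension.} Since $\mu$ is continuous and strictly decreasing, the image $\mu((0, \sup u))$ covers $(0, \meas(\{u > 0\}))$ and therefore contains values strictly inside $(0, 1)$. For any $t$ in the L\'evy-Gromov equality set with $\mu(t) \in (0,1)$, \autoref{thm:rigiditylevy} applies and gives that $(X, \dist, \meas)$ is isomorphic to a spherical suspension $[0, \pi] \times_{\sin}^{N-1} Y$ for some $\RCD(N-2, N-1)$ space $(Y, \dist_Y, \meas_Y)$ with $\meas_Y(Y) = 1$. Moreover, item (iii) of \autoref{thm:rigiditylevy} identifies each such $\{u > t\}$, up to $\meas$-null sets, with a metric ball around one of the two tips $x_N, x_S$ (the equivalence classes of $0$ and $\pi$ in $[0,\pi] \times Y / \sim$).

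\textbf{Step 3: Common tip and radiality.} Fix $t_* \in (0, \sup u)$ in the equality set with $\mu(t_*) \in (0, 1)$, and let $x_0 \in \{x_N, x_S\}$ be the tip such that $\{u > t_*\}$ coincides $\meas$-a.e.\ with a ball of the form $[0, r_*] \times Y$ (up to relabeling). For any other $t$ in the equality set with $\mu(t) \in (0,1)$, the nesting relation between $\{u > t\}$ and $\{u > t_*\}$, together with the explicit description of balls around the two tips as $[0,r]\times Y$ and $[\pi-r,\pi]\times Y$ respectively and the strict inequality $r_* < \pi$, rules out the possibility that $\{u > t\}$ is a ball around the opposite tip. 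So all such super-level sets agree $\meas$-a.e.\ with balls around the same tip $x_0$. Setting $v(x) := u^{*}(\dist(x, x_{0}))$ defines a continuous function whose super-level sets are the balls around $x_0$ of the correct volume, by equimeasurability of $u^*$ with $u$ (\autoref{prop:equimeasurability}) and the structure of the spherical suspension. A right-continuity argument (the indicator $\chi_{\{u>t\}}$ is the increasing limit of $\chi_{\{u>s\}}$ as $s\downarrow t$, and similarly for $v$) upgrades the $\leb^1$-a.e.\ coincidence of $\{u > t\}$ and $\{v > t\}$ to all $t$, hence $u = v$ $\meas$-a.e.\ and, by continuity together with $\supp(\meas) = X$, everywhere.

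The main obstacle is Step 1: isolating the L\'evy-Gromov equality for the super-level sets from the single averaged equality \eqref{eq:polyaszego} requires tracing which intermediate inequalities can and must be saturated, and leveraging the hypothesis $\abs{\nabla u} \neq 0$ on $\supp(u)$ to avoid pathologies (e.g.\ plateaus of $u$, or level sets of positive $\meas$-measure). A secondary subtlety lies in Step 3, where one must check that the nesting of super-level sets forces a unique choice of tip, and that degenerate thresholds (where $\mu(t) \in \{0,1\}$) do not obstruct the radiality conclusion.
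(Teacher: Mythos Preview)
Your Steps 1 and 2 are correct and match the paper's argument. The real gap is in Step 3.

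You implicitly assume that once $(X,\dist,\meas)$ is known to be a spherical suspension, there is a \emph{fixed} structure $X\simeq[0,\pi]\times_{\sin}^{N-1}Y$ with two fixed tips $x_N,x_S$, and that every isoperimetrically optimal set is a ball around one of these two points. This is not justified: the suspension structure provided by \autoref{thm:rigiditylevy}(iii) may depend on the set. The round sphere $\mathbb{S}^N$ is the canonical counterexample: every point is a tip for some suspension structure, and every geodesic ball is isoperimetrically optimal. In particular, for $t_1>t_2$ one can have $\{u>t_1\}=B_{r_1}(x_1)$ and $\{u>t_2\}=B_{r_2}(x_2)$ with $x_1\neq x_2$ and still $B_{r_1}(x_1)\subset B_{r_2}(x_2)$; nesting alone does not force a common centre. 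This is exactly the obstruction described in \autoref{rem:nablauneq0Intro}, and it shows that your nesting argument cannot work without further input from the hypothesis $\abs{\nabla u}\neq 0$ $\meas$-a.e.\ on $\supp(u)$.

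The paper's Step 3 is substantially different. It uses that equality in \eqref{eq:estimatepolya} is the equality case of H\"older, forcing $\abs{\nabla u}$ to be constant $\Per(\{u>t\})$-a.e.\ for a.e.\ $t$. Combined with \eqref{eq:derivativedistributionminimal} this gives $\abs{\nabla (v^*\circ u)}=1$ $\meas$-a.e.\ on $\supp(u)$, where $v^*$ is the inverse of $u^*$. Writing $f:=v^*\circ u$, one then shows via coarea that $\overline{\{f<t\}}$ is a ball of radius exactly $t$, and finally that the centre $x_t$ of this ball is independent of $t$: if $x_t\neq x_0$ one finds points at distance $<t$ where $f$ jumps by $\sim t$, contradicting $\abs{\nabla f}=1$ via a test-plan argument. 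This is where the non-vanishing gradient hypothesis is genuinely used, and it is what your proposal is missing.
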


\begin{remark}\label{rem:uSignRigidity}
Before discussing the proof, let us stress that \autoref{thm:RigPolSz} is stated for a \emph{non-negative} function  $u$ just  for uniformity of notation with the previous sections. Nevertheless, such a non-negativity assumption can be suppressed, once the rearrangement $u^{*}$ in the  Polya-Szego inequality  \eqref{eq:polyaszego} is understood as the decreasing rearrangement of $|u|$ (see also \autoref{rem:RearSignChang}). The same holds for \autoref{thm:SpRigPoSz} below.
\end{remark}

\begin{proof}

\textbf{Step 1:} $(X,\dist,\meas)$ is a spherical suspension.
\\If equality occurs in \eqref{eq:polyaszego}, it follows from the proof of \autoref{thm:polyaszego} that  equality must occur in \eqref{eq:levyapplied} for $\leb^1$-a.e. $t\in(0,M)$, where $M:=\max u$. Hence  for $\leb^1$-a.e. $t\in(0,M)$ it holds:
\begin{equation}\label{eq:u>tOpt}
\Per(\{u> t\})=\mathcal{I}_{N-1,N}(\mu(t)).
\end{equation}
Since, by the very definition of the distribution function, we have $\meas(\{u> t\})=\mu(t)$, it follows that $\mathcal{I}_{(X,\dist,\meas)}(\mu(t))=\mathcal{I}_{N-1,N}(\mu(t))$ for $\leb^1$-a.e. $t\in(0,M)$. Thus we are in position to apply part $(i)$ of \autoref{thm:rigiditylevy} to conclude that $(X,\dist,\meas)$ is isomorphic to a spherical suspension $[0,\pi]\times_{\sin}^{N-1}Y$ for some $\RCD(N-2,N-1)$ space $(Y,\dist_Y,\meas_Y)$. 
\medskip

\textbf{Step 2:}  for every $t\in [0,M)$, the closure of the open superlevel set $\{u>t\}$ is a closed metric ball centred at a tip of a spherical suspension.
\\We first claim that \eqref{eq:u>tOpt} holds for every $t\in (0,M)$. To this aim, call $G\subset [0,M]$ the subset of those  $t\in (0,M)$ where \eqref{eq:u>tOpt} holds true. Since $G$ is dense,  for any fixed  $t\in[0,M]$ we can find a  sequence $(t_n)_{n\in\setN}\subset G$ such that  $t_n\to t$ as $n\to \infty$.  Our assumptions grant that $\{u> t_n\}$ converges in measure to $\{u> t \}$. From the lower semicontinuity of the perimeter and the continuity of the model isoperimetric profile it follows that:
\begin{align*}
\Per(\{u> t\})\leq \liminf_{n\to \infty}  \Per(\{u> t_{n}\} = \liminf_{n\to \infty} \mathcal{I}_{N-1,N}(\mu(t_{n})) = \mathcal{I}_{N-1,N}(\mu(t)),
\end{align*}
yielding the claim. In order to  conclude the proof of Step 2, observe that  $\{u>{t}\}$ is an open set, since $u$ is continuous. Denote the topological closure of $\{u>{t}\}$ by  $\overline{\{u> t\}}$.
Observe that, since $\supp(\mm)=X$ and $\abs{\nabla u}(x)\neq 0$ for $\meas$-a.e. $x\in \supp u$, $\meas(\overline{\{u> t\}}\setminus\{u>t\})=0$. Therefore the two sets have also the same perimeter. Hence part (iii) of \autoref{thm:rigiditylevy} implies that there exists an (a priori $t$-dependent) structure of spherical suspension $X\simeq [0,\pi]\times_{\sin}^{N-1}Y_t$ for a suitable $\RCD(N-2,N-1)$ space $(Y_t,\dist_t,\meas_t)$  such that  either
$$
\big[0,r(\mu(t))\big) \times Y_{t} \setminus \overline{\{u> t\}} \text{ or } \big(\pi-r(\mu(t)),\pi \big]\times Y_{t} \setminus\overline{\{u> t\}} 
$$
is an open set of $\mm$-measure zero, thus empty as $\supp \mm=X$. Without loss of generality we can assume  $[0,r(\mu(t)))\times Y_t\subset \overline{\{u>t\}}$.
\\Note that the topological closure of $\big[0,r(\mu(t))\big) \times Y_{t}$ is $\big[0,r(\mu(t))\big] \times Y_{t}$. Hence $\big[0,r(\mu(t))\big] \times Y_{t}\subset \overline{\{u>t\}}$. Moreover,  as we already observed above, we have that $\meas(\overline{\{u> t\}})=\meas(\set{u>t})=\meas \big([0,r(\mu(t))]\times Y_t \big)$. It follows that $\{u>t\}\subset [0,r(\mu(t))]\times Y_t$ (otherwise, since $ \{u>t\}\setminus [0,r(\mu(t))]\times Y_t$ is open, if non-empty  it would have strictly positive measure contradicting the last assertion) and hence $\overline{\{u>t\}}\subset [0,r(\mu(t))]\times Y_t$, as the right hand side is closed. Combining the inclusions obtained so far, we get
\begin{equation}\label{eq:ugeqtBall}
\big[0,r(\mu(t))\big] \times Y_{t}= \overline{\{u>t\}}.
\end{equation}
Let us stress that a priori the structure of spherical suspension may depend on $t\in (0,M)$; for instance in the $N$-sphere any point is a pole with respect to a corresponding structure of spherical suspension and any metric ball centred at any point is optimal for the isoperimetric problem.
\medskip

\textbf{Step 3:} Conclusion.
\\To prove the rigidity statement about the function $u$, we need to show that the above structure as spherical suspension is independent of $t\in (0,M)$. To this aim we first observe that, if equality holds in \eqref{eq:polyaszego}, then equality holds also in \eqref{eq:estimatepolya} for $\leb^1$-a.e. $t\in(0,M)$. Since \eqref{eq:estimatepolya} can be rewritten as
\begin{equation*}
\Per(\{u> t\})\le \left(\int\abs{\nabla u}^{p-1}\di \Per(\{u> t\})\right)^{\frac{1}{p}}\left(\int\frac{1}{\abs{\nabla u}}\di\Per(\{u> t \})\right)^{\frac{p-1}{p}},
\end{equation*}
we can conclude that, for $\leb^1$-a.e. $t\in[0,M]$,  $\abs{\nabla u}$ is constant $\Per(\{u> t \})$-a.e. by necessary conditions for equality in H\"older's inequality.
It follows that
\begin{equation}\label{eq:rigidityderivative}
\frac{1}{\abs{\nabla u}(x)}\mathcal{I}_{K,N}(\mu(t))=-\mu'(t), \quad \text{for $\leb^1$-a.e. $t\in [0,M]$ and  $\Per(\{u> t\})$-a.e. $x$.}
\end{equation}
Since $u^*$ is Lipschitz with $(u^*)'(t)<0$ for a.e. $t\in (0,r(v))$, it admits a strictly decreasing absolutely continuous  inverse function that we denote by  $v^*$. We claim that $f(x):=v^*\circ u(x)$ is the distance function from a fixed point $x_{0}$, playing the role of the pole of a structure  as spherical suspension \emph{independent of $t$}.  

To this aim we first observe that the combination of  \eqref{eq:rigidityderivative} with \eqref{eq:derivativedistributionminimal} gives that $\abs{\nabla f}(x)=1$ for $\meas$-a.e. $x\in\overline{\set{u>0}}$. From Step 2, we know that, for any $t\in (0,\max f)$, $ \overline{ \{f< t \}}$ is a closed metric ball of radius $r(t)$ centred at a point $x_{t}\in X$; moreover $X$ admits a structure of spherical suspension having $x_{t}$ as one of the two tips.  In particular, for any $t\in (0,\max f)$, it holds 
\begin{equation*}
\Per(\overline{ \{f< t \}})= \frac{1}{c_{N}} \sin(r(t))^{N-1}\text{ and } \meas(\overline{ \{f< t \}})= \frac{1}{c_{N}} \int_0^{r(t)}\sin(s)^{N-1}\di s.
\end{equation*}  
Combining what we obtained above with the assumption that $\abs{\nabla u}\neq 0$ $\meas$-a.e. on $\supp u$, we get that $u$ has a unique maximum point $x_0$, hence $\set{f\leq 0}=\set{x_0}$. In particular $r(0)=0$.
Taking into account the fact that $\abs{\nabla f}(x)=1$ for $\meas$-a.e. $x\in\supp u$, an application of the coarea formula yields that
\begin{equation}\label{eq:identityofvolumes}
 \frac{1}{c_N}\int_0^t\sin(r(s))^{N-1}\di s=\meas(\overline{ \{f< t \}})=\frac{1}{c_N}\int_0^{r(t)}\sin^{N-1}(s)\di s,
\end{equation} 
for every $t\in[0,\max f]$. Note that \eqref{eq:identityofvolumes} implies in particular that $t\mapsto r(t)$ is differentiable on $[0,\max f]$. Differentiating \eqref{eq:identityofvolumes} in $t$, we obtain that
\begin{equation*}
\sin(r(t))^{N-1}=r'(t)\sin(r(t))^{N-1},
\end{equation*}
for every $t\in[0,\max f]$. Therefore $r'(t)=1$ for every $t\in[0,\max f]$ and thus $r(t)\equiv t$. 
\\

\noindent 
We now claim that the centre $x_t$ of the ball $B_{t}(x_t)=\overline{\set{f< t}}$ is independent of $t$. 
\\If not we can find $t\in (0,\max f)$ such that $x_t\neq x_{0}$.
This implies that there exist $\epsilon>0$ and  $x\in \partial B_t(x_t)$ with $\dist(x_{0},x)\le t-\epsilon$. Since $f(0)=0$ and $f(x)=t$, we claim that this contradicts $\abs{\nabla f}=1$ $\meas$-a.e. on $\supp u$.
\\From the continuity of  $f$, we can find $\delta\in (0, \epsilon/8)$ such that for every $x'\in B_{\delta}(x)$ and every $y'\in B_{\delta}(x_{0})$ it holds $f(x')\ge f(x)-\epsilon/4=t-\epsilon/4$ and $f(y')\le f(x_0)+\epsilon/4=\epsilon/4$.
Now consider
\begin{equation*}
\mu_0:=\frac{1}{\meas(B_{\delta}(x))}\meas\res B_{\delta}(x), \quad \mu_1:=\frac{1}{\meas(B_{\delta}(x_{0}))}\meas\res B_{\delta}(x_{0})
\end{equation*}
and let $(\mu_t)_{t\in [0,1]}$ be a $W_2$-geodesic joining them. From \cite{GigliRajalaSturm16} the dynamic optimal transference plan $\nu$ representing  $(\mu_t)_{t\in [0,1]}$ is a test plan. We thus reach a contradiction:
\begin{equation*}
t-\frac{\epsilon}{2}\le \int f\di\mu_0-\int f\di\mu_1\le \int \abs{\nabla f}\abs{\dot{\gamma}}\di\nu\le t-\epsilon+2\delta\le t-\frac{3}{4}\epsilon.
\end{equation*}  
This proves that the center $x_t$ is independent of $t$ and  thus $\overline{\set{f< t}}=B_t(x_{0})$.
\\Since $f$ is continuous, it follows that $f(x)=t$ for every  $x\in \partial B_{t}(x_{0})$ and every $t\in [0, \max f]$, or in other words  $f(x)= \dist(x_{0},x)$ for every $x\in\supp(u)$.  The claim is given by composing  with $u^*$ both sides in this equality.
\end{proof}

\begin{remark}\label{rem:nablauneq0}
A natural question is  whether the condition $|\nabla u|\neq 0$ $\mm$-a.e. is sharp in \autoref{thm:RigPolSz}. Clearly, if $u$ is a constant function, also the decreasing rearrangement $u^{*}$ is constant; hence $u,u^{*}$  achieve equality in the Polya-Szego inequality but one cannot expect to infer anything on the space. However in the next \autoref{thm:SpRigPoSz}  we show that, as soon as $u$ is non constant, the equality in Polya-Szego forces the space to be a spherical suspension. The proof of such a statement is more delicate than step 1 of \autoref{thm:RigPolSz} and builds on top of the almost rigidity for L\'evy-Gromov inequality.
As already observed in \autoref{rem:nablauneq0Intro},  the condition $|\nabla u|\neq 0$ $\mm$-a.e. is necessary to infer that $u(\cdot)=u^{*}\circ \dist(x_{0}, \cdot)$, even knowing a priori that the space is a spherical suspension with pole $x_{0}$ and that $u$ achieves equality in Polya-Szego.
\end{remark}

\begin{theorem}[Space rigidity in the Polya-Szego inequality]\label{thm:SpRigPoSz}
 Let $(X,\dist,\meas)$ be an $\RCD(N-1,N)$ space for some $N\in[2,+\infty)$ with $\meas(X)=1$.  
\\Let $\Omega\subset X$ be an open set such that $\meas(\Omega)=v\in (0,1)$ and assume that there exists a nonnegative function $u\in W^{1,p}_0(\Omega)$, $u\not\equiv 0$, achieving equality  in the Polya-Szego inequality \eqref{PSIntro}.

 Then $(X,\dist,\meas)$ is a spherical suspension, namely there exists an $\RCD(N-2,N-1)$ space $(Y,\dist_Y,\meas_Y)$ with $\meas_Y(Y)=1$ such that $(X,\dist,\meas)$ is isomorphic as a metric measure space to $[0,\pi]\times_{\sin}^{N-1}Y$.
\end{theorem}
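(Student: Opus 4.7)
The plan is to argue by contradiction: assume $X$ is not isomorphic to any spherical suspension, so that part (i) of \autoref{thm:rigiditylevy} forces the strict pointwise inequality $\mathcal{I}_{(X,\dist,\meas)}(s) > \mathcal{I}_{N-1,N}(s)$ for every $s \in (0,1)$. I will sharpen the Polya-Szego inequality for $u$ by retaining the bound $\Per(\{\cdot > t\})\ge \mathcal{I}_{(X,\dist,\meas)}$ (instead of relaxing to $\mathcal{I}_{N-1,N}$), and then exploit the assumed equality to force $u^*\equiv 0$, contradicting $u\not\equiv 0$.

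First, approximate $u$ by nonnegative $u_n \in \LIPc(\Omega)$ with $u_n \to u$ in $W^{1,p}(X,\meas)$ and $\abs{\nabla u_n}_1 \neq 0$ $\meas$-a.e.~on $\{u_n>0\}$, using density of $\LIPc(\Omega)$ in $W^{1,p}_0(\Omega)$ combined with \autoref{lemma:approximationwithnonvanishing} and \autoref{cor:approximationnonvanishing1}. For such $u_n$, \autoref{lemma:derivativedistribution} gives absolute continuity of the distribution function $\mu_n$ with $\mu_n' < 0$ $\leb^1$-a.e., so its inverse $u_n^{\#}$ is absolutely continuous with nonvanishing derivative a.e.; hence $u_n^*$ is Lipschitz (\autoref{prop:liptolip}) with $(u_n^*)'\neq 0$ $\leb^1$-a.e.~on $(0,r_n)$, where $r_n := \meas_{N-1,N}^{-1}(\meas\{u_n>0\})$. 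Repeating the proof of \autoref{thm:polyaszego} for $u_n$ while keeping $\Per(\{u_n>t\}) \ge \mathcal{I}_{(X,\dist,\meas)}(\mu_n(t))$ in place of $\mathcal{I}_{N-1,N}(\mu_n(t))$, and then changing variables $s = \mu_n(t)$ and $x = \meas_{N-1,N}^{-1}(s)$ to rewrite the resulting right-hand side through $u_n^*$, yields the refined inequality
\begin{equation*}
\int_\Omega \abs{\nabla u_n}^p\di\meas \;\ge\; \int_0^{r_n} \tilde\Psi(x)\,\abs{(u_n^*)'(x)}^p\di\meas_{N-1,N},
\end{equation*}
with $\tilde\Psi(x) := \bigl(\mathcal{I}_{(X,\dist,\meas)}(\meas_{N-1,N}([0,x])) / \mathcal{I}_{N-1,N}(\meas_{N-1,N}([0,x]))\bigr)^p \ge 1$.

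To pass to the limit, the assumed equality in \eqref{PSIntro}, the $L^p$-continuity of the rearrangement (\autoref{prop:equimeasurability}), the standard Polya-Szego inequality for $u_n$, and lower semicontinuity of the $1$-dimensional $p$-energy together give $\norm{(u_n^*)'}_{L^p(\meas_{N-1,N})}\to \norm{(u^*)'}_{L^p(\meas_{N-1,N})}$. Combined with $u_n^*\to u^*$ in $L^p$, this identifies the unique weak $L^p$-limit of $(u_n^*)'$ as $(u^*)'$, and uniform convexity of $L^p$ for $p>1$ upgrades this to strong $L^p$-convergence, so $(u_n^*)'\to (u^*)'$ $\meas_{N-1,N}$-a.e.~along a subsequence. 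Applying Fatou to the right-hand side of the refined inequality and invoking equality in \eqref{PSIntro} on the left-hand side,
\begin{equation*}
\int_0^r \abs{(u^*)'}^p\di\meas_{N-1,N} \;=\; \int_\Omega\abs{\nabla u}_w^p\di\meas \;\ge\; \int_0^r \tilde\Psi\,\abs{(u^*)'}^p\di\meas_{N-1,N};
\end{equation*}
since $\tilde\Psi\ge 1$, this forces $(\tilde\Psi-1)\abs{(u^*)'}^p = 0$ $\meas_{N-1,N}$-a.e.~on $(0,r)$. Under the contradiction hypothesis $\tilde\Psi>1$ on the whole of $(0,r)$, so $(u^*)'=0$ a.e.~there; together with the boundary condition $u^*(r)=0$ from \autoref{cor:symmapssobolevintosobolev} this yields $u^*\equiv 0$, and equimeasurability forces $u\equiv 0$, against the hypothesis.

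The hardest point is establishing the refined Polya-Szego inequality in the second paragraph: the change of variables converting the bound $\int_0^{M_n}(\mathcal{I}_{(X,\dist,\meas)}(\mu_n(t)))^p/(-\mu_n'(t))^{p-1}\di t$ into a weighted $p$-energy of $u_n^*$ rests on the absolute continuity of $u_n^{\#} = \mu_n^{-1}$, which is precisely why the approximation must be chosen with $\abs{\nabla u_n}_1 \neq 0$ $\meas$-a.e.~on $\{u_n>0\}$. This non-vanishing condition, delivered by \autoref{lemma:approximationwithnonvanishing}, is the technical linchpin that is unavailable for the limit function $u$ itself but can be enforced along the approximating sequence.
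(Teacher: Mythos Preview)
Your proof is correct and follows the same overall strategy as the paper: approximate $u$ by Lipschitz functions with a.e.\ non-vanishing gradient, apply the improved Polya--Szego inequality (\autoref{prop:extensionofformulae}) to each approximant, and derive a contradiction from the strict gap in the L\'evy--Gromov profile when $X$ is not a spherical suspension. The main difference is in how the limit is taken: the paper works in the $t$-variable with the measures $f_n\leb^1$, localizes to a window $[t_0,t_3]$ on which $u^*$ is non-constant, and uses the uniform gap $c_\epsilon$ from \eqref{eq:belowprof} (relying on lower semicontinuity of $\mathcal{I}_{(X,\dist,\meas)}$); you instead change variables to the $x$-coordinate on the model interval, upgrade weak $L^p$-convergence of $(u_n^*)'$ to strong via uniform convexity, and pass to the limit by Fatou against the fixed weight $\tilde\Psi$. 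Your route is arguably cleaner, as it avoids the localization step and the choice of $t_0,\dots,t_3$, at the cost of invoking the uniform convexity of $L^p$; both arguments ultimately rest on the same refined inequality and the same rigidity input from \autoref{thm:rigiditylevy}.
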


\begin{proof}

Let $(u_n)_{n\in\setN}$ be a sequence of Lipschitz functions with compact support in $\Omega$ such that $\abs{\nabla u_n}\neq 0$ $\meas$-a.e. on $\set{u_n>0}$ for any $n\in\setN$ approximating $u$ in $L^p(\Omega,\meas)$ and in $W^{1,p}$ energy given by \autoref{lemma:approximationwithnonvanishing}. Let $u_n^*$ and $u^*$ be the decreasing rearrangements of $u_n$ and $u$ respectively. The $L^p$-continuity of the decreasing rearrangement, together with the lower semicontinuity of the $p$-energy and the Polya-Szego inequality, yield
\begin{equation}\label{eq:liminflimsup}
\int_0^{r(v)}\abs{\nabla u^*}^p\di\meas_{N-1,N}\le\liminf_{n\to\infty}\int_0^{r(v)}\abs{\nabla u^*_n}^p\di\meas_{N-1,N}\le\limsup_{n\to\infty}\int_{\Omega}\abs{\nabla u_n}^p\di\meas=\int_{\Omega}\abs{\nabla u}^p\di\meas.
\end{equation}
It follows that $(u^*_n)_{n\in \setN}$ converges in $W^{1,p}$-energy to $u^*$, since by assumption $u$ achieves the equality in the Polya-Szego inequality.
\\ Up to extracting a subsequence, that we do not relabel, we can assume that $(u_n^*)_{n\in \setN}$ converges to $u^*$ both locally uniformly on $(0,r(v)]$ and in $W^{1,p}(([0,r(v)],\dist_{eu},\meas_{N-1,N}))$, and moreover that both the $\liminf$ and the $\limsup$ in \eqref{eq:liminflimsup} are full limits.
Denoting by $\mu_n$ and $\mu$ the distribution functions of $u_n$ and $u$ respectively, it follows that, for any $t\in(0,\sup u^*)$ such that $\meas_{N-1,N}(\set{u^*=t})=0$, it holds $\mu_n(t)\to\mu(t)$ as $n\to\infty$.
\\Moreover, if we let $f_n:=f_{u_n}$ be as in \eqref{eq:deffu}, then the improved Polya-Szego inequality \eqref{eq:improvedpolyawithprofile} grants that
\begin{equation*}
\int_{\Omega}\abs{\nabla u_{n}}^p\di\meas\ge\int_0^{\sup u^*_{n}}\left(\frac{\mathcal{I}_{(X,\dist,\meas)}(\mu_{n}(t))}{\mathcal{I}_{N-1,N}(\mu_{n}(t))}\right)^p f_{n}(t)\di t\geq \int_0^{\sup u^*_{n}} f_{n}(t)\di t=\int_{I_{N-1,N}}\abs{\nabla u^{*}_{n}}^p\di\meas_{N-1,N},
\end{equation*}
which, combined with the equality in the equality in \eqref{eq:liminflimsup}, gives

\begin{equation}\label{eq:contr}
\lim_{n\to\infty}\int_0^{\sup u_n^*}\left(\left(\frac{\mathcal{I}_{(X,\dist,\meas)}(\mu_n(t))}{\mathcal{I}_{N-1,N}(\mu_n(t))}\right)^p-1\right)f_n(t)\di t=0.
\end{equation}
Let us argue by contradiction and suppose that $(X,\dist,\meas)$ is not isomorphic to a spherical suspension. It follows from \autoref{thm:LevyGromov} that $\mathcal{I}_{(X,\dist,\meas)}(v)>\mathcal{I}_{N,N-1}(v)$ for any $v\in(0,1)$. Since by \autoref{prop:lscprofiles} we know that $\mathcal{I}_{(X,\dist, \meas)}$ is lower semicontinuous on $[0,1]$ and $\mathcal{I}_{N-1,N}$ is continuous on $[0,1]$ and positive on $(0,1)$, we have that, for any $0<\epsilon<1/2$, there exists $c_{\epsilon}>0$ such that
\begin{equation}\label{eq:belowprof}
\inf_{v\in[\epsilon,1-\epsilon]}\left\lbrace \left(\frac{\mathcal{I}_{(X,\dist,\meas)}(v)}{\mathcal{I}_{N-1,N}(v)}\right)^p-1\right\rbrace >c_{\epsilon}>0.
\end{equation} 
To conclude we observe that, thanks to the assumption that $u$ is non constant and to what we already observed, we can find $0<t_0<t_1<t_2<t_3<\sup u^*$, $0<\epsilon<1$ and $n_0\in\setN$ such that the following hold true: 
\begin{equation}\label{eq:nondegeneracy}
\int_{\set{t_1<u^*<t_2}}\abs{\nabla u^*}^p\di\meas_{N-1,N}>0,
\end{equation} 
\begin{equation}\label{eq:defcont}
\set{t_1<u^*<t_2}\subset\set{t_0<u_n^*<t_3}\quad\text{for any } n\ge n_0
\end{equation}
and
\begin{equation}\label{eq:defcont1}
\mu_n(t)\in[\epsilon,1-\epsilon]\quad \text{for any } t\in[t_0,t_3]\text{ and } n\ge n_0.
\end{equation}
Combining \eqref{eq:defcont} with the $L^{p}(\meas_{N-1,N})$ convergence of $\abs{\nabla u_n^*}$ to $\abs{\nabla u^*}$ and the coarea formula, we obtain that
\begin{equation}\label{eq:lastest}
\liminf_{n\to\infty}\int_0^{\sup u_n^*}f_n(t)\di t\ge\liminf_{n\to\infty}\int_{\set{t_0<u_n^*<t_3}}\abs{\nabla u_n^*}^{p}\di\meas_{N-1,N}\ge\int_{\set{t_1<u^*<t_2}}\abs{\nabla u^*}^{p}\di\meas_{N-1,N}.
\end{equation}
Eventually, putting \eqref{eq:belowprof} together with \eqref{eq:nondegeneracy} and \eqref{eq:lastest}, we obtain
\begin{equation*}
\liminf_{n\to\infty}
\int_0^{\sup u_n^*}\left(\left(\frac{\mathcal{I}_{(X,\dist,\meas)}(\mu_n(t))}{\mathcal{I}_{N-1,N}(\mu_n(t))}\right)^p-1\right)f_n(t)\di t\ge c_{\epsilon}\int_{\set{t_1<u^*<t_2}}\abs{\nabla u^*}^{p}\di\meas_{N-1,N}>0,
\end{equation*}
contradicting \eqref{eq:contr}.
\end{proof}

\autoref{thm:SpRigPoSzIntro}  follows from \autoref{thm:RigPolSz}, \autoref{rem:uSignRigidity} and \autoref{thm:SpRigPoSz}.

\subsection{Rigidity in the $p$-spectral gap}
The goal of this section is to investigate the rigidity in the $p$-spectral gap inequality (for Dirichlet boundary conditions), i.e. to prove  \autoref{thm:RigpSprectrIntro}.

\begin{theorem}[Domain-Rigidity for the $p$-spectral gap]\label{thm:rigidityspectral}
Let $(X,\dist,\meas)$ be an $\RCD(N-1,N)$ space. Let $\Omega\subset X$ be an open set with $\meas(\Omega)=v$ for some $v\in(0,1)$ and suppose that $\lambda^p_{X}(\Omega)=\lambda^p_{N-1,N,v}$.

Then $(X,\dist,\meas)$ is isomorphic to a spherical suspension and the topological closure of $\Omega$ coincides with the closed metric ball centred at one of the tips of the spherical suspension of $\meas$-measure $v$.
\end{theorem}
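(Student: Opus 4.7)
The strategy is to chain the Polya-Szego and L\'evy-Gromov inequalities and read off equality cases. First I would invoke \autoref{thm:existenceofminimizers} to produce a first eigenfunction $u\in W^{1,p}_0(\Omega)$, which we may take non-negative (replacing $u$ by $\abs{u}$). Let $u^*:[0,r(v)]\to[0,+\infty)$ be its monotone rearrangement. By \autoref{cor:symmapssobolevintosobolev}, $u^*\in W^{1,p}([0,r(v)])$ with $u^*(r(v))=0$, hence by density $u^*$ is an admissible competitor for $\lambda^p_{N-1,N,v}$; combining this with \autoref{prop:equimeasurability},
\[
\lambda^p_{N-1,N,v}=\lambda^p_X(\Omega)=\frac{\int_\Omega\abs{\nabla u}_w^p\di\meas}{\int_\Omega u^p\di\meas}\geq\frac{\int_0^{r(v)}\abs{(u^*)'}^p\di\meas_{N-1,N}}{\int_0^{r(v)}(u^*)^p\di\meas_{N-1,N}}\geq\lambda^p_{N-1,N,v}.
\]
Equality must therefore hold throughout: $u$ achieves equality in Polya-Szego, and $u^*$ is a first Dirichlet $p$-eigenfunction of the one-dimensional weighted problem on $([0,r(v)],\dist_{eu},\meas_{N-1,N})$.

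Since $u\not\equiv 0$, \autoref{thm:SpRigPoSz} immediately yields that $(X,\dist,\meas)$ is a spherical suspension $[0,\pi]\times^{N-1}_{\sin}Y$ for some $\RCD(N-2,N-1)$ space $(Y,\dist_Y,\meas_Y)$. To localise the rigidity at the level of super-level sets, I would use that the first eigenfunction of the one-dimensional weighted $p$-Laplacian on $[0,r(v)]$ is $C^1$ and strictly decreasing (a classical ODE fact: the Euler--Lagrange equation written in radial form is a weighted Sturm--Liouville problem whose first eigenfunction has constant sign and no critical points in the interior). Thus $u^*$ satisfies the hypotheses of \autoref{rm:extensionofusefulformula}, and applying the improved Polya-Szego inequality \eqref{eq:improvedpolyawithper} together with \autoref{thm:LevyGromov} and \eqref{eq:fuCoarea},
\[
\int_\Omega\abs{\nabla u}^p\di\meas\geq\int_0^{\sup u^*}\!\left(\frac{\Per(\{u>t\})}{\mathcal{I}_{N-1,N}(\mu(t))}\right)^{\!p}f_u(t)\di t\geq\int_0^{\sup u^*}f_u(t)\di t=\int_0^{r(v)}\abs{(u^*)'}^p\di\meas_{N-1,N}.
\]
Since the extremes coincide by the Polya-Szego equality and $f_u>0$ a.e., this forces $\Per(\{u>t\})=\mathcal{I}_{N-1,N}(\mu(t))$ for $\leb^1$-a.e.\ $t$; by continuity of $\mathcal{I}_{N-1,N}\circ\mu$, lower semicontinuity of the perimeter, and $L^1$-convergence of super-level sets, the identity extends to every $t\in(0,\sup u^*)$ as in Step 2 of \autoref{thm:RigPolSz}. \autoref{thm:rigiditylevy}(iii) then identifies each $\{u>t\}$, up to a $\meas$-null set, with a closed metric ball of radius $r(\mu(t))$ centred at one of the two tips of a suspension structure of $X$.

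To promote this into a global statement I would argue as in Step 3 of \autoref{thm:RigPolSz}: the monotone nesting of the super-level sets, combined with the strict monotonicity of $\mu$, forces all these balls to share a common centre $x_0\in X$, a single tip of a fixed spherical suspension structure. Since the one-dimensional first eigenfunction $u^*$ is strictly positive on $[0,r(v))$, equimeasurability gives $\meas(\{u>0\})=\meas_{N-1,N}([0,r(v)))=v=\meas(\Omega)$; combined with $\{u>0\}\subset\Omega$ up to $\meas$-null sets (a consequence of $u\in W^{1,p}_0(\Omega)$ via compactly supported Lipschitz approximations), this yields that $\Omega$ and the closed ball $B_{r(v)}(x_0)=[0,r(v)]\times Y$ centred at the common tip agree up to a $\meas$-null set. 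Finally, using $\supp\meas=X$, I upgrade the $\meas$-almost equality to the topological identity $\bar{\Omega}=B_{r(v)}(x_0)$: if there existed $x\in B_{r(v)}(x_0)\setminus\bar{\Omega}$, then a small enough neighbourhood $U$ of $x$ would satisfy $U\cap\Omega=\emptyset$ while $\meas(U\cap B_{r(v)}(x_0))>0$, contradicting $\meas(B_{r(v)}(x_0)\setminus\Omega)=0$.

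The main obstacle I foresee is the passage from a family of $t$-dependent suspension decompositions (one per super-level set, coming from \autoref{thm:rigiditylevy}(iii)) to a single structure with a fixed pole $x_0$ that is simultaneously compatible with every $\{u>t\}$, together with the final upgrade from measure-theoretic agreement $\meas(\Omega\triangle B_{r(v)}(x_0))=0$ to the topological identity $\bar{\Omega}=B_{r(v)}(x_0)$. Both steps rely crucially on the nesting of super-level sets and on $\supp\meas=X$, and need the same careful tip identification carried out in Step 3 of the proof of \autoref{thm:RigPolSz}.
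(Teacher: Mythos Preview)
Your overall strategy (produce a minimiser, chain Polya--Szego with the model eigenvalue to force equality, deduce that $u^*$ is a first eigenfunction on the model, then use the improved Polya--Szego inequality via \autoref{rm:extensionofusefulformula} to make the super-level sets optimal in L\'evy--Gromov) is exactly the paper's. The divergence, and the genuine gap, is in your final step.

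You propose to identify \emph{all} super-level sets as balls centred at a \emph{common} tip by invoking Step~3 of \autoref{thm:RigPolSz}. But Step~3 is not a nesting argument: it uses that $u$ is Lipschitz with $\abs{\nabla u}\neq 0$ $\meas$-a.e.\ on $\supp u$ to get equality in H\"older, hence $\abs{\nabla u}$ constant on level sets, hence $\abs{\nabla (v^*\circ u)}=1$ $\meas$-a.e., and finally a test-plan computation to pin down the centre. None of these hypotheses are available for a $W^{1,p}_0$ eigenfunction in this generality, and monotone nesting alone does not force a common centre (on the round sphere, balls of different radii centred at different points can be nested). So as written the argument does not close.

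The paper avoids this entirely. Instead of reconciling the tips for every $t$, it applies the rigidity only once, to $\{u>0\}$: pick $t_n\downarrow 0$ with $\{u>t_n\}$ optimal, pass to the limit using lower semicontinuity of perimeter and continuity of $\mathcal{I}_{N-1,N}\circ\mu$, and conclude that $\{u>0\}$ itself is optimal in L\'evy--Gromov. One application of \autoref{thm:rigiditylevy}(iii) then gives $\overline{\{u>0\}}=[0,r]\times Y$. The identification $\overline{\{u>0\}}=\bar\Omega$ is \emph{not} obtained from ``$\{u>0\}\subset\Omega$ up to null sets plus $\meas(\{u>0\})=v$'' (which only gives measure-theoretic agreement), but from the Harnack inequality for first $p$-eigenfunctions \cite[Corollary~5.7]{LatvalaMarolaPere}, which yields $u>0$ pointwise on all of $\Omega$. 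That pointwise positivity is what upgrades the equality to a topological one, and it is the ingredient missing from your proposal.
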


\begin{proof}
Suppose that $\lambda^p_{X}(\Omega)=\lambda^p_{N-1,N,v}$. Let $u\in W^{1,p}_0(\Omega)$ be a non-negative eigenfunction with $\|u\|_{L^{p}}=1$  associated to the first eigenvalue $\lambda^p_{X}(\Omega)$, whose existence is granted by \autoref{thm:existenceofminimizers}. Then  \autoref{cor:symmapssobolevintosobolev} gives:
\begin{equation*}
\lambda^{p}_{N-1,N,v}=\int_{\Omega}\abs{\nabla u}_w^p\di\meas\ge \int_0^{r}\abs{\nabla u^*}^p\di\meas_{N-1,N}\ge\lambda^p_{N-1,N,v},
\end{equation*} 
where, as before, $r$ is defined by $\mm_{N-1,N}([0,r])=\meas(\Omega)=v$. Hence equality holds true in all the inequalities so that $u^*$ is an eigenfunction of the $p$-Laplacian associated to the first eigenvalue on the one dimensional model space $\left([0,r],\dist_{eu},\meas_{N-1,N}\right)$. It follows from the corresponding ODE that $u^*\in C^{0}([0,r])\cap C^{1}((0,r))$ and it is strictly decreasing.

Hence, taking into account \autoref{rm:extensionofusefulformula}, \eqref{eq:improvedpolyawithper} holds true so that
\begin{equation*}
\lambda^{p}_{N-1N,v}=\int_{\Omega}\abs{\nabla u}_w^p\di\meas\ge \int_0^{\sup u^*}\left(\frac{\Per(\{u> t\})}{\mathcal{I}_{N-1,N}(\mu(t))}\right)^pf_u(t)\di t\ge\int_0^{\sup u^*}f_u(t)\di t=\lambda^p_{N-1,N,v}.
\end{equation*} 
Therefore it must hold 
\begin{equation}\label{eq:eqinLevy}
\Per(\{u> t\})=\mathcal{I}_{N-1,N}(\mu(t)), 
\end{equation}
for $\leb^1$-a.e. $t$ such that $f_u(t)\neq 0$.

In particular there exists at least one level $t_0$ such that the super-level set $\{u> t\}$ is optimal for the L\'evy-Gromov inequality. Thus we are in position to apply \autoref{thm:rigiditylevy} to conclude that $(X,\dist,\meas)$ is isomorphic, as a metric measure space, to a spherical suspension.

Moreover the $C^1$ regularity of $u^*$, together with Sard's lemma, grants that the set of those levels $t$ such that \eqref{eq:eqinLevy} holds true is dense in $(0,\sup u^*)$ (actually it is a full $\leb^1$-measure set). In particular we can find a sequence $(t_n)_{n\in\setN}$ monotonically converging to $0$ from above such that $\{u> t_n\}$ is optimal in the L\'evy-Gromov inequality. It follows from the continuity of the model profile $\mathcal{I}_{N-1,N}$ and the lower semicontinuity of the perimeter w.r.t. $L^1$ convergence that $\{u>0\}$ is optimal in the L\'evy-Gromov inequality itself.   Since $\{u>0\}$ is an open subset and  $\supp(\mm)=X$,  part (iii) of \autoref{thm:rigiditylevy} implies that there exists an $\RCD(N-2,N-1)$ space $(Y,\dist_{Y},\meas_Y)$ such that
  $X\simeq [0,\pi]\times_{\sin}^{N-1}Y$ and
$$
[0,r) \times Y \setminus \overline{\set{u> 0}} \text{ or } (\pi-r,\pi]\times Y\setminus \overline{\set{u> 0}} 
$$
is an open set of $\mm$-measure zero, hence it is empty (observe that $\overline{\{u>0\}}$ has the same perimeter and the same measure of $\{u>0\}$). Without loss of generality we can assume the first case holds and therefore $[0,r)\times Y\subset\overline{\set{u>0}}$.
\\Note that the topological closure of $[0,r) \times Y$ is $[0,r] \times Y$. Moreover, observing that \cite[Corollary 5.7]{LatvalaMarolaPere} grants that $u$ is strictly positive on $\Omega$, we obtain that the topological closure of 
 $\{u> 0\}$ coincides with $\bar{\Omega}$, the topological closure of $\Omega$. Applying once more part (iii) of \autoref{thm:rigiditylevy} and taking into account the assumption that $\meas$ has full support we can also say that $\set{u>0}\subset [0,r]\times Y$. The desired conclusion $\bar{\Omega}=[0,r]\times Y$ follows.
\end{proof}

Recall that, in the case of smooth Riemannian manifolds, the eigenfunction associated to the first eigenvalue on a smooth domain (with Dirichlet boundary conditions) is always simple (see for instance \cite{KawohlLindqvist06} for an elementary proof). In order to prove that in the case of rigidity in the spectral gap inequality also the eigenfunction must coincide with the radial one, we will exploit a generalization of such a principle to the case of our interest.

\begin{theorem}[Eingenfunction-Rigidity for the $p$-spectral gap]\label{prop:rigidityfortheeigenfucntion}
Let $(X,\dist,\meas)$ be an $\RCD(N-1,N)$ space  isomorphic to a spherical suspension and let $\Omega\subset X$ be an open  subset whose topological closure coincides with a closed metric ball centred at one of the tips of the spherical suspension satisfying  $\lambda^p_{X}(\Omega)=\lambda^p_{N-1,N,v}$,  $\meas(\Omega)=v\in (0,1)$.

Then, for any $1<p<+\infty$, the eigenfunction associated to the first eigenvalue of the $p$-Laplace equation with homogeneous Dirichlet boundary conditions on $\Omega$ is unique up to a scalar factor (and it coincides with the radial one).  
\end{theorem}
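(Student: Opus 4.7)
The plan is to adapt the classical Kawohl-Lindqvist proof \cite{KawohlLindqvist06} of simplicity of the first Dirichlet $p$-eigenvalue to the nonsmooth setting via Gigli's tangent module calculus on $\RCD$ spaces. First, \autoref{thm:rigidityspectral} provides the identifications $X\simeq[0,\pi]\times_{\sin}^{N-1}Y$ and $\bar\Omega=[0,R]\times Y$ for the appropriate $R\in(0,\pi)$ with $\meas_{N-1,N}([0,R])=v$. This lets me construct a \emph{radial} first eigenfunction $\tilde u\in W^{1,p}_0(\Omega)$ by setting $\tilde u(r,y):=w(r)$, where $w$ is the classical first eigenfunction on the one-dimensional weighted model $([0,R],\dist_{eu},\meas_{N-1,N})$ with Dirichlet condition $w(R)=0$. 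A warped-product/Fubini computation gives $|\nabla\tilde u|_w(r,y)=|w'(r)|$ and $\mathcal R_p(\tilde u)=\lambda^p_{N-1,N,v}$, so $\tilde u$ is indeed a first eigenfunction on $\Omega$; it therefore suffices to prove uniqueness up to a multiplicative scalar, after which the radiality of any eigenfunction is automatic by comparison with $\tilde u$.

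Next I would reduce to positive eigenfunctions: for any nonzero $u\in W^{1,p}_0(\Omega)$ realising the eigenvalue, the identity $\bigl|\nabla|u|\bigr|_w=|\nabla u|_w$ for minimal weak upper gradients makes $|u|$ also a first eigenfunction, and the strong minimum principle \cite[Corollary 5.7]{LatvalaMarolaPere} (already invoked in \autoref{thm:rigidityspectral}) shows $|u|>0$ on $\Omega$; applying this to $u_+$ and $u_-$ separately (which have disjoint supports) forces one of them to vanish, so $u$ has constant sign.

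The heart of the argument is the Kawohl-Lindqvist convexity step. Given two positive first eigenfunctions $u,v$ with $\|u\|_{L^p}=\|v\|_{L^p}=1$, set $w_t:=tu^p+(1-t)v^p$ and $\eta_t:=w_t^{1/p}\in W^{1,p}_0(\Omega)$, so that $\|\eta_t\|_{L^p}=1$. The key pointwise inequality is
\begin{equation}\label{eq:KLpointwise}
|\nabla\eta_t|_w^p\le t|\nabla u|_w^p+(1-t)|\nabla v|_w^p,\qquad \meas\text{-a.e.\ on }\Omega,
\end{equation}
which is derived via the chain and Leibniz rules of Gigli's tangent module: using $\nabla\eta_t=\tfrac{1}{p}w_t^{(1-p)/p}\nabla w_t$ and $\nabla w_t=p\bigl(tu^{p-1}\nabla u+(1-t)v^{p-1}\nabla v\bigr)$, a direct manipulation gives the tangent-module identity
\[
\frac{\nabla\eta_t}{\eta_t}=\lambda_1\,\nabla\log u+\lambda_2\,\nabla\log v,\qquad \lambda_1:=\frac{tu^p}{w_t},\ \lambda_2:=\frac{(1-t)v^p}{w_t},\ \lambda_1+\lambda_2=1.
\]
Strict convexity of $\xi\mapsto|\xi|^p$ in the Hilbert tangent module (this is where the $\RCD$ assumption is crucial), combined with $|\nabla\log u|_w=|\nabla u|_w/u$ on $\{u>0\}$, then yields \eqref{eq:KLpointwise}. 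Integrating gives $\int|\nabla\eta_t|_w^p\di\meas\le\lambda^p_{N-1,N,v}$, while the admissibility of $\eta_t$ yields the reverse inequality, forcing $\meas$-a.e.\ equality in \eqref{eq:KLpointwise} for every $t\in[0,1]$.

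Finally, strictness of the convexity of $|\cdot|^p$ for $p>1$ together with $\meas$-a.e.\ equality in \eqref{eq:KLpointwise} for some $t\in(0,1)$ forces $\nabla\log u=\nabla\log v$ $\meas$-a.e.\ on $\Omega$, i.e.\ $\nabla(\log u-\log v)=0$ in the tangent module. Connectedness of $\Omega$ (inherited from the spherical suspension structure) and the standard identification principle for Sobolev functions with vanishing gradient on $\RCD$ spaces give $\log u-\log v\equiv c$, and the $L^p$-normalisation then forces $c=0$, so $u\equiv v$. Comparison with the radial $\tilde u$ concludes the proof. The main obstacle is making the pointwise inequality \eqref{eq:KLpointwise} and its rigidity rigorous in the nonsmooth setting for functions that are only $W^{1,p}_0$: this requires the full chain and Leibniz rule apparatus on the tangent module (which is why Gigli's framework is invoked), a careful approximation of $u,v$ by Lipschitz functions to access the classical convexity calculation, and the passage from the $\meas$-a.e.\ equality in the convexity inequality to $\nabla\log u=\nabla\log v$ in the Hilbert tangent module.
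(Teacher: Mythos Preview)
Your proposal follows the same Kawohl--Lindqvist strategy as the paper and is essentially correct. The paper also builds the radial eigenfunction, takes two positive eigenfunctions $v_1,v_2$, forms $(v_1^p+v_2^p)^{1/p}$, and shows it is again a first eigenfunction, forcing equality in the chain of pointwise inequalities you write.

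There are, however, two differences in execution worth flagging. First, where you invoke ``strict convexity of $\xi\mapsto|\xi|^p$ in the Hilbert tangent module'' to pass directly from equality in \eqref{eq:KLpointwise} to $\nabla\log u=\nabla\log v$, the paper separates this into equality in the triangle inequality plus equality in Jensen, obtaining both $|\nabla\log v_1|_w=|\nabla\log v_2|_w$ and proportionality; it then \emph{reduces the general $p$ to $p=2$} by observing that the explicit radial eigenfunction has locally bounded gradient, so the norm identity forces every eigenfunction to have locally bounded gradient, placing $\nabla\log v_i$ in $L^2_{\loc}(T\Omega)$ where Cauchy--Schwarz rigidity is available. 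Your direct route via pointwise strict convexity of the Hilbertian pointwise norm is fine, but you should say explicitly that infinitesimal Hilbertianity makes the fibrewise norm an inner-product norm so that equality in $|\lambda_1 a+\lambda_2 b|^p\le\lambda_1|a|^p+\lambda_2|b|^p$ forces $a=b$. Second, your final step (``connectedness and the standard identification principle'') is where the paper works harder: since $\log u-\log v$ is only in $W^{1,p}_{\loc}(\Omega)$ and not obviously globally Sobolev, the paper builds explicit test plans along the radial geodesics of the warped product to show the ratio depends only on $y\in Y$, and then applies the Sobolev-to-Lipschitz property on the $\RCD(N-2,N-1)$ factor $Y$ to conclude constancy. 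Your abstract appeal can be made to work (local Sobolev-to-Lipschitz on compactly contained balls plus connectedness), but as written it skips this point.
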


\begin{proof}
Since the closure of $\Omega$ coincides with the closed ball and $\Omega$ is open, we can infer that $\Omega$ is contained in the open ball. It follows that proving the statement we can suppose without loss of generality that $\Omega$ is an open ball centred at one of the tips of the spherical suspension. Indeed, any first Dirichlet eigenfunction on $\Omega$ is a first Dirichlet eigenfunction on the open ball. 

We wish to adapt the idea of \cite[Lemma 3.2]{KawohlLindqvist06} to this fairly more general setting: we claim that, if $v_1,v_2\in W^{1,p}_0(\Omega)$ are both non-negative eigenfunctions associated to the first eigenvalue of the $p$-Laplace equation with homogeneous Dirichlet boundary conditions on $\Omega$, defining $v:=(v_1^p+v_2^p)^{\frac{1}{p}}$ we obtain that $v\in W^{1,p}_0(\Omega)$ has Rayleigh quotient strictly smaller than $v_1$ (and $v_2$), unless $v_1$ and $v_2$ are proportional.
\medskip

\textbf{Step 1:} $v:=(v_1^p+v_2^p)^{\frac{1}{p}}\in W^{1,p}_0(\Omega)$ and is a first Dirichlet  $p$-eigenfunction.
\\Since $v_1,v_2\in W^{1,p}_0(\Omega)$ we can find sequences $(f_n)_{n\in\setN}$ and $(g_n)_{n\in\setN}$ of non-negative Lipschitz functions with compact support in $\Omega$ such that $f_n\to v_1$ and $g_n\to v_2$ in $L^p(\Omega,\meas)$ and $\int\abs{\nabla f_n}^p\di\meas\to\int\abs{\nabla v_1}_w^p\di\meas$ (and analogous statement for $v_2$). Introduce now $h_n:=(f_n^p+g_n^p)^{\frac{1}{p}}$. It is simple to check that $h_n\in\LIPc(\Omega)$ and $h_n\to v$ in $L^p(\Omega,\meas)$. Moreover the pointwise inequality
\begin{equation*}
\abs{\nabla{h_n}}^p(x)\le\abs{\nabla f_n}^p(x)+\abs{\nabla g_n}^p(x),
\end{equation*} 
holds true on $\Omega$ for any $n\in\setN$, hence $v\in W^{1,p}_0(\Omega)$. Furthermore it holds that
\begin{equation}\label{eq:optimalityofv}
\int_{\Omega}\abs{\nabla v}_w^p\di\meas\le\int_{\Omega}\abs{\nabla v_1}_w^p\di\meas+\int_{\Omega}\abs{\nabla v_2}_w^p\di\meas.
\end{equation}
Since $\norm{v}^p_{L^p}=\norm{v_1}^p_{L^p}+\norm{v_2}^p_{L^p}$ and $v_1,v_2$ are eigenfunctions associated to the first eigenvalue, equality actually holds true in \eqref{eq:optimalityofv} and $v$ is a first eigenfunction of the Dirichlet $p$-Laplacian.

The proof in the smooth case goes on by proving that it must hold $\abs{\nabla v}^p=\abs{\nabla v_1}^p+\abs{\nabla v_2}^p$ on $\Omega$ and then turning this information into the equality $\nabla \log v_1=\nabla\log v_2$ which gives the desired conclusion.
\\The non-smooth setting requires some care, in particular  we will call into play the notion of tangent module of a metric measure space (see \cite{Gigli14}).
\medskip

\textbf{Step 2:} It holds $ |\alpha \nabla \log v_1+ \beta \nabla \log v_2|_w= \alpha |\nabla \log v_1|_w+ \beta |\nabla \log v_2|_w,\,\meas\text{-a.e.}$,
where $\alpha:=\frac{v_1^p}{v_1^p+v_2^p}$  and $\beta:=\frac{v_2^p}{v_1^p+v_2^p}$; moreover $
|\nabla \log v_1|_w=|\nabla \log v_2|_w, \, \meas\text{-a.e.}$ .

Denote by $L^p_{\loc}(T\Omega)$ the module of locally $L^p(\meas)$-integrable vector fields on $\Omega$.
\\Observe  that \cite[Theorem 5.1]{LatvalaMarolaPere} grants continuity of $v_1,v_2$ on $\Omega$ (and therefore local boundedness), while \cite[Corollary 5.7]{LatvalaMarolaPere} ensures that they are also locally bounded away from $0$. Hence the following identity between elements of the $L^p_{\loc}$-normed $L^{\infty}$-module $L^{p}_{\loc}(T\Omega)$ makes sense and is justified by the chain rule:
\begin{equation*}
\nabla v=v\left(\frac{v_1^p}{v_1^p+v_2^p}\nabla\log v_1+\frac{v_2^p}{v_1^p+v_2^p}\nabla\log v_2\right).
\end{equation*}
An application of the defining properties of normed moduli and Jensen's inequality yield now that $\meas$-a.e. on $\Omega$ it holds
\begin{equation}\label{eq:chaineq}
\begin{split} 
\abs{\nabla v}_w^p\le& v^p\left(\frac{v_1^p}{v_1^p+v_2^p}\abs{\nabla\log v_1}_w+\frac{v_2^p}{v_1^p+v_2^p}\abs{\nabla\log v_2}_w\right)^p\\
\le&v^p\left(\frac{v_1^p}{v_1^p+v_2^p}\abs{\nabla\log v_1}_w^p+\frac{v_2^p}{v_1^p+v_2^p}\abs{\nabla\log v_2}_w^p\right)\\
=&\abs{\nabla v_1}_w^p+\abs{\nabla v_2}_w^p.
\end{split}
\end{equation}
Since $v,v_1,v_2$ are all first eigenfunctions it follows that equality holds true $\meas$-a.e. in all the above inequalities. 
Equality in \eqref{eq:chaineq} implies
\begin{equation}\label{eq:|aV1+bV2|=a|V1|+b|V2|}
|\alpha \nabla \log v_1+ \beta \nabla \log v_2|_w= \alpha |\nabla \log v_1|_w+ \beta |\nabla \log v_2|_w,  \quad  \meas\text{-a.e.},
\end{equation}
where $\alpha:=\frac{v_1^p}{v_1^p+v_2^p}$ and $\beta:=\frac{v_2^p}{v_1^p+v_2^p}$ satisfy $\alpha,\beta\in (0,1), \alpha+\beta=1$.
\\Moreover, equality in Jensen's inequality yields
\begin{equation}\label{eq:|Vlog1|=|Vlog2|}
|\nabla \log v_1|_w=|\nabla \log v_2|_w, \quad \meas\text{-a.e..}
\end{equation}

\textbf{Step 3:} in case $p=2$, then $\nabla \log v_1=\nabla \log v_2$ $\meas$-a.e. in the sense of $L^2_{\loc}$-modules.
\\Let us first consider the case $p=2$. In this case, equality in the triangle inequality \eqref{eq:|aV1+bV2|=a|V1|+b|V2|} forces equality in the Cauchy-Schwartz inequality 
\begin{equation*}
\langle \alpha \nabla \log v_1, \beta \nabla \log v_2  \rangle = |\alpha \nabla \log v_1|_w \, |\beta \nabla \log v_2|_w,  \quad \meas\text{-a.e.},
\end{equation*}
which in turn gives
\begin{equation}\label{eq:V1=gV2}
 \nabla \log v_1= \gamma \nabla \log v_2   \quad \meas\text{-a.e.},
\end{equation}
for some non-negative function $\gamma$. Combining \eqref{eq:V1=gV2} with \eqref{eq:|Vlog1|=|Vlog2|}, we get the desired conclusion

\begin{equation}\label{eq:V1=V2}
 \nabla \log v_1=  \nabla \log v_2   \quad \meas\text{-a.e.},
\end{equation}
in the sense of $L^2_{\loc}(T\Omega)$-modules.
\medskip

\textbf{Step 4:} Conclusion. 
\\Recall that, by assumption, $\bar{\Omega}=[0,R]\times T$ is a closed metric ball centred at a tip of the spherical suspension $X=[0,\pi]\times_{\sin}^{N-1} Y$,  for some $\RCD(N-2,N-1)$ space $(Y,\dist_Y,\meas_Y)$ and $\lambda_{X}^{p}(\Omega)=\lambda^{p}_{N-1,N,v}$.
\\In order to handle the case of a general $p\in (1,+\infty)$, observe that the function obtained as composition of the distance from a tip of a spherical suspension with the one dimensional first eigenfunction of the $p$-Laplacian on $([0, R], \dist_{eu}, \meas_{N-1,N})$ is a first eigenfunction on  $\Omega$, moreover we also know it has locally bounded gradient. Thus, choosing $v_1$ to be this particular eigenfunction,
the identity  \eqref{eq:|Vlog1|=|Vlog2|} gives that any other first $p$-eigenfunction on $\Omega$ has locally bounded gradient. In particular both $\nabla \log v_1$ and  $\nabla \log v_2$ belong to $L^2_{\loc}(T\Omega)$. Thus we reduced to the case $p=2$ and we conclude that \eqref{eq:V1=V2} holds.
\\Summarizing, we proved that if $v_1$ is the radial first eigenfunction and $v_2$ is any other first eigenfunction of the $p$-Laplacian, it holds $\nabla\log v_1=\nabla\log v_2$ as elements of $L^{p}_{\loc}(T\Omega)$. 
\\In order to conclude the proof we next show that $v_2\equiv cv_1$ for some constant $c\in \R$.
\\To this aim, for any $r_0<r_1\in (0,R)$  let 
$$
\Gamma^{r_{0}, r_{1}}:=\{\gamma^{r_{0}, r_{1}}_{y,\eps}\in \Geo(X): \, \gamma^{r_{0}, r_{1}}_{y,\eps}(t)=\big( t(r_{0}-\eps)+(1-t)(r_{1}-\eps), y \big), \, t\in [0,1], \, \eps\in [0, r_{0}/2 ], y\in Y \}.
$$
 Define $\nu^{r_1,r_2}\in \Prob(\Geo(X))$ by
$$
d\nu^{r_0,r_1}(\gamma):= \frac{1}{\meas([r_{0}/2,r_{0}]\times Y)}  \chi_{\Gamma^{r_0,r_1}}  \di\meas(\gamma(0)),
$$
where $ \chi_{\Gamma^{r_0,r_1}}$ is the characteristic function of $\Gamma^{r_0,r_1}$.
\\Since by assumption $\meas(t,y)=\sin^{N-1}(t) \leb^1(t)\otimes \meas_Y(y)$, it is easily seen that there exists $C_{r_1,r_2}\in (0,\infty)$ so that
$$
(\ee_t)_{\sharp}(\nu^{r_1,r_2})\leq C_{r_1,r_2} \meas, \quad \text{for all } t\in [0,1].
$$
Therefore $\nu^{r_1,r_2}$ is a test plan and we get
\begin{align*}
 \frac{1}{\meas([r_{0}/2,r_{0}]\times Y)}\int_{0}^{\frac{r_{0}}{2}} \int_Y  &|\log(v_1/v_2)(r_{1}-t,y)- \log(v_1/v_2)(r_0-t,y) | \di\meas_{Y}(y) \, \sin^{N-1}(t) \, \di t\\ 
&= \int_{\Geo(X)} |(\log(v_1/v_2))(\gamma_1)- (\log(v_1/v_2))(\gamma_0)| \di \nu^{r_0,r_1}(\gamma) \nonumber \\
&\leq  \int  |\nabla (\log v_{1}- \log v_2)|_w \di \nu^{r_0,r_1}(\gamma) =0.
\end{align*}
Hence, for all $r_0<r_1\in (0,R)$, we obtain that $\log(v_1/v_2)(r_0-t,y)=\log(v_1/v_2)(r_2-t,y)$ for $\meas_Y$-a.e. $y\in Y$ and $\leb^{1}$-a.e. $t\in (0, r_{0}/2)$. Since both $v_1$ and $v_2$ are continuous, we infer that there exists a continuous function $f:Y\to (0,+\infty)$ such that $v_1(t,y)=f(y) v_2(t,y)$ for all $(t,y)\in \Omega$.

Now, by chain rule, we get that $\log f\in W^{1,p}(Y,\dist_Y,\meas_Y)$ and  $ \nabla \log f (y)= \nabla \log v_1 (t,y)- \nabla \log v_2(t,y)=0 $ for $\meas_Y$-a.e. $y\in Y$ and $\leb^1$-a.e. $t\in (0,R)$. By the Sobolev-to-Lipschitz property holding on $\RCD(K,\infty)$ spaces \cite{AmbrosioGigliSavare14}, we get that $\log f\in \LIP(Y)$ with $\Lip(\log f)=0$; hence there exists $c\in \setR$ so that $f(y)=c$ for all $y\in Y$ and we conclude that $v_2\equiv c v_1$, as desired.
 \end{proof} 

The combination of \autoref{thm:rigidityspectral} and  \autoref{prop:rigidityfortheeigenfucntion} gives \autoref{thm:RigpSprectrIntro}.

\section{Almost rigidity in the Dirichlet $p$-spectral gap}
This section is dedicated to an almost-rigidity result which seems interesting even for smooth Riemannian manifolds. The idea is to argue by contradiction, exploiting on the one hand the compactness of the class of  $\RCD(N-1,N)$ spaces with respect to measured Gromov Hausdorff convergence and, on the other hand, the compactness/lower-semicontinuity of the functionals involved.
\\The following  result will play a key role in the compactness argument.
\begin{lemma}\label{lemma:lscofmeasures}
Let $(v_n)_{n\in\setN}$ be a sequence of functions in $W^{1,p}\left(([0,r],\dist_{eu},\meas_{N-1,N})\right)$ such that $v_n(r)=0$ for any $n\in\setN$. Assume that $(v_n)_{n\in\setN}$ converge in $L^p([0,r],\meas_{N-1,N})$ and in energy to $v\in W^{1,p}\left(([0,r],\dist_{eu},\meas_{N-1,N})\right)$. Define
\begin{equation*}
f_n(t):=\int\abs{\nabla v_n}^{p-1}\di\Per(\{v_n> t\}),\quad f(t):=\int\abs{\nabla v}^{p-1}\di\Per(\{v> t\})
\end{equation*} 
and let $\eta_n:=f_n\leb^1$ and $\eta:=f\leb^1$. Then $\eta_n\weakto\eta$ in duality with bounded and continuous functions.
\end{lemma}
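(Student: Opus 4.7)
My plan is to use the one-dimensional coarea formula to rewrite the integral of any test function $\phi\in C_b(\setR)$ against $\eta_n$ as an integral over the interval, and then to pass to the limit using the strong $W^{1,p}$-convergence $v_n\to v$ that the hypotheses provide for free. Specifically, applying the coarea formula on $\left([0,r],\dist_{eu},\meas_{N-1,N}\right)$ with integrand $\phi(v_n)\abs{\nabla v_n}^{p-1}$ and observing that $\phi(v_n)=\phi(t)$ on the level set $\{v_n=t\}$ on which $\Per(\{v_n>t\})$ is concentrated, one gets
\begin{equation*}
\int_\setR \phi\,d\eta_n \;=\; \int_0^{+\infty}\phi(t)f_n(t)\,dt \;=\; \int_0^r \phi(v_n)\abs{\nabla v_n}^p\,d\meas_{N-1,N},
\end{equation*}
and analogously for $v,\eta$. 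Hence the lemma reduces to showing $\int\phi(v_n)\abs{\nabla v_n}^p\,d\meas_{N-1,N}\to\int\phi(v)\abs{\nabla v}^p\,d\meas_{N-1,N}$.

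Next I would upgrade the hypotheses to strong $W^{1,p}$-convergence. The $L^p$-convergence together with the energy convergence imply that $(v_n)$ is bounded in $W^{1,p}$, so by reflexivity $v_n\weakto v$ weakly in $W^{1,p}$ (the weak limit being identified via the strong $L^p$-convergence of $v_n$); combined with the convergence of $L^p$-norms of the gradients coming from the energy convergence and with the uniform convexity of $L^p(\meas_{N-1,N})$ for $1<p<+\infty$, one obtains strong convergence $\abs{\nabla v_n}\to \abs{\nabla v}$ in $L^p(\meas_{N-1,N})$. In particular $\abs{\nabla v_n}^p\to\abs{\nabla v}^p$ in $L^1(\meas_{N-1,N})$.

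Finally, I pass to the limit. Up to extracting a subsequence, $v_n\to v$ $\meas_{N-1,N}$-a.e., whence $\phi(v_n)\to\phi(v)$ a.e.\ with uniform bound $\norm{\phi}_\infty$; the splitting
\begin{equation*}
\int \phi(v_n)\abs{\nabla v_n}^p - \int \phi(v)\abs{\nabla v}^p = \int \phi(v_n)\left(\abs{\nabla v_n}^p-\abs{\nabla v}^p\right) + \int \left(\phi(v_n)-\phi(v)\right)\abs{\nabla v}^p
\end{equation*}
then yields the conclusion: the first summand is bounded by $\norm{\phi}_\infty\,\bigl\Vert\,\abs{\nabla v_n}^p-\abs{\nabla v}^p\,\bigr\Vert_{L^1}\to 0$ and the second vanishes by dominated convergence. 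Since the limit does not depend on the extracted subsequence, the full sequence converges and the desired weak convergence of measures follows. The only delicate step is the upgrade from energy convergence to strong $W^{1,p}$-convergence, which crucially uses $1<p<+\infty$ through uniform convexity; the rest is routine, once one notes that $W^{1,p}$-functions on a one-dimensional weighted interval admit an absolutely continuous representative, so that the coarea formula of Corollary~\ref{cor:coareaRCD} can be applied in the form used above.
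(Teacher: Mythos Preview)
Your proof is correct and follows the same overall strategy as the paper: both reduce the claim, via the coarea formula, to showing
\[
\int_0^r \phi(v_n)\,\abs{\nabla v_n}^p\di\meas_{N-1,N}\longrightarrow \int_0^r \phi(v)\,\abs{\nabla v}^p\di\meas_{N-1,N}
\]
for every bounded continuous $\phi$. The only difference lies in how the limit is taken. The paper exploits the one-dimensional Sobolev embedding to get locally uniform convergence $v_n\to v$ on $[\epsilon,r]$, then splits the integral into $[0,\epsilon]$ and $[\epsilon,r]$ and controls the tail near $0$ using the convergence of the total energies. You instead upgrade directly to strong convergence $v_n'\to v'$ in $L^p(\meas_{N-1,N})$ via the Radon--Riesz property (weak convergence plus norm convergence in a uniformly convex space), which yields $\abs{\nabla v_n}^p\to\abs{\nabla v}^p$ in $L^1$ and allows a clean dominated-convergence argument. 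Your route is slightly more streamlined and makes the role of $1<p<\infty$ explicit; the paper's route gives the additional information of local uniform convergence, which is not needed here but is used elsewhere in the almost-rigidity argument.
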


\begin{proof}
We begin by observing that any function in $W^{1,p}\left(([0,r],\dist_{eu},\meas_{N-1,N})\right)$ is continuous in $(0,r]$. Indeed this result is well known in the case when, instead of $\meas_{N-1,N}$, the interval is equipped with the Lebesgue measure; in the case of our interest it suffices to observe that the density of $\meas_{N-1,N}$ w.r.t. $\leb^1$ is uniformly bounded from below on $[\epsilon,r]$ for any $\epsilon>0$. Moreover, by an analogous argument, functions in $W^{1,p}(([0,r],\dist_{eu},\meas_{N-1,N}))$ with uniformly bounded $p$-energies are uniformly H\"older continuous on $[\epsilon,r]$ for any $\epsilon>0$.

In view of what we remarked above, up to extracting a subsequence we can assume that $(v_n)_{n\in \setN}$ converges to $v$ uniformly on $[\epsilon,r]$ for any $\epsilon>0$ (recall that $v_n(r)=0$ for any $n\in\setN$). Moreover we can assume that the measures $\gamma_n:=\abs{\nabla v_n}^{p}\di\meas_{N-1,N}$ weakly converge to $\gamma:=\abs{\nabla v}^{p}\di\meas_{N-1,N}$.

We need to prove that for any bounded and continuous function $\phi:[0,+\infty)\to\setR$ it holds
\begin{equation}\label{eq:limitclaim}
\lim_{n\to\infty}\int\phi(t)f_n(t)\di t=\int\phi(t)f(t)\di t.
\end{equation}
To this aim we observe that, thanks to the coarea formula, it holds
\begin{align*}
\int\phi(t)f_n(t)\di t=&\int\phi(t)\left(\int\abs{\nabla v_n}^{p-1}\di\Per(\{v_n> t\})\right)\di t\\
=&\int\phi(v_n(x))\abs{\nabla v_n}^p(x)\di\meas_{N-1,N}(x)
\end{align*}
for any $n\in\setN$ (and an analogous identity holds true for $f$). Thus, in order to prove \eqref{eq:limitclaim}, it remains to prove that
\begin{equation}\label{eq:claim2}
\lim_{n\to\infty}\int_0^{r}\phi(v_n(x))\abs{\nabla v_n}^p(x)\di\meas_{N-1,N}(x)=\int_0^{r}\phi(v(x))\abs{\nabla v}^p(x)\di\meas_{N-1,N}(x).
\end{equation}
To this aim we observe that for any $\epsilon>0$ it holds that $\phi\circ v_n$ converge uniformly to $\phi\circ v$ on $[\epsilon,r]$, hence 
\begin{equation}\label{eq:goodcontrol}
\lim_{n\to\infty}\int_{\epsilon}^{r}\phi(v_n(x))\abs{\nabla v_n}^{p}(x)\di\meas_{N-1,N}(x)=\int_{\epsilon}^{r}\phi(v(x))\abs{\nabla v}^p(x)\di\meas_{N-1,N}(x).
\end{equation}
Moreover, calling $M:=\max\phi$, it holds that
\begin{equation}\label{eq:controltails}
\limsup_{n\to\infty}\abs{\int_0^{\epsilon}\phi\circ v_n\abs{\nabla v_n}^p\di\meas_{N-1,N}-\int_0^{\epsilon}\phi\circ v\abs{\nabla v}^p\di\meas_{N-1,N}}\le 2M\int_0^{\epsilon}\abs{\nabla v}^p\di\meas_{N-1,N}
\end{equation}
and the right hand-side in \eqref{eq:controltails} goes to $0$ as $\epsilon$ goes to $0$.
Therefore, in order to prove \eqref{eq:claim2}, it is sufficient to split the interval of integration into $[0,\epsilon]$ and $[\epsilon,r]$, pass to the $\limsup$ as $n\to\infty$ taking into account \eqref{eq:goodcontrol} and \eqref{eq:controltails} and then to let $\epsilon\downarrow 0$.
\end{proof}

A proof of the following useful result can be found for instance in \cite[Lemma 3.3]{AmbrosioGigliSavare15}. It will play a key role in the forthcoming proof of \autoref{thm:almostrigidity}.

\begin{lemma}[Joint lower semicontinuity]\label{lemma:jointlowersc}
Let $(\eta_n)_{n\in\setN}$ be a sequence of finite Borel measures weakly converging to a measure $\eta$ in duality w.r.t. bounded and continuous functions. Let moreover $f_n,f:\setR\to[0,+\infty)$ be such that
\begin{equation*}
f(t)\le\liminf_{n\to\infty}f_n(t_n)
\end{equation*}   
for any sequence $(t_n)_{n\in\setN}$ such that $t_n\to t$.
Then it holds that
\begin{equation*}
\int f(t)\di\eta(t)\le\liminf_{n\to\infty}\int f_n(t)\di\eta_n(t).
\end{equation*}
\end{lemma}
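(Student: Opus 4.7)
The plan is to reduce the joint lower semicontinuity to the defining weak convergence $\eta_n\weakto\eta$ against bounded continuous test functions, by approximating $f$ from below by bounded continuous functions and then upgrading the pointwise $\Gamma$-liminf hypothesis to a uniform almost-domination on compact sets.

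\textbf{Setup and reductions.} Taking $t_n\equiv t$ in the hypothesis shows that $f$ is lower semicontinuous on $\setR$. Since $f$ is also nonnegative and real valued, a standard construction (e.g.\ the truncated Moreau--Yosida envelopes $\varphi_k(t):=\min\{k,\inf_s(f(s)+k|t-s|)\}$, which are $k$-Lipschitz, bounded by $k$, and increase pointwise to $f$) produces a sequence $(\varphi_k)$ of nonnegative bounded continuous functions with $\varphi_k\uparrow f$ pointwise. By monotone convergence,
\[
\int f\di\eta=\sup_k\int\varphi_k\di\eta,
\]
so it suffices to prove
\[
\int\varphi\di\eta\le\liminf_{n\to\infty}\int f_n\di\eta_n
\]
for every bounded continuous $\varphi\colon\setR\to[0,\infty)$ with $\varphi\le f$. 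Passing to a subsequence, I may additionally assume that the $\liminf$ on the right is a full limit and is finite, for otherwise there is nothing to prove.

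\textbf{Key claim.} For any such $\varphi$, any compact $K\subset\setR$, and any $\varepsilon>0$, there exists $n_0$ such that
\[
\varphi(t)\le f_n(t)+\varepsilon\qquad\text{for all }t\in K\text{ and }n\ge n_0.
\]
The proof is by contradiction: if the claim fails, then along a subsequence there exist $t_{n_k}\in K$ with $f_{n_k}(t_{n_k})<\varphi(t_{n_k})-\varepsilon$. By compactness of $K$, a further subsequence yields $t_{n_k}\to t\in K$, and then the $\Gamma$-liminf hypothesis combined with the continuity of $\varphi$ gives
\[
f(t)\le\liminf_k f_{n_k}(t_{n_k})\le\limsup_k\varphi(t_{n_k})-\varepsilon=\varphi(t)-\varepsilon\le f(t)-\varepsilon,
\]
a contradiction.

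\textbf{Conclusion via tightness.} Narrow convergence $\eta_n\weakto\eta$ gives $\sup_n\eta_n(\setR)<\infty$ and, by Prokhorov's theorem, tightness of $(\eta_n)$. Given $\varepsilon,\delta>0$, fix a compact $K$ with $\sup_n\eta_n(\setR\setminus K)<\delta$ and let $n_0$ be as in the key claim. For $n\ge n_0$, splitting the integral over $K$ and its complement and using $\varphi\ge 0$,
\[
\int\varphi\di\eta_n\le\int_K (f_n+\varepsilon)\di\eta_n+\|\varphi\|_\infty\delta\le\int f_n\di\eta_n+\varepsilon\sup_m\eta_m(\setR)+\|\varphi\|_\infty\delta.
\]
Since $\varphi$ is bounded and continuous, $\int\varphi\di\eta_n\to\int\varphi\di\eta$; passing to the limit $n\to\infty$ and then letting $\varepsilon,\delta\to 0$ yields $\int\varphi\di\eta\le\liminf_n\int f_n\di\eta_n$. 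Taking the supremum over all admissible $\varphi\le f$ concludes the proof.

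\textbf{Main obstacle.} The only nontrivial step is the contradiction argument in the key claim, which is precisely where the pointwise $\Gamma$-liminf hypothesis is upgraded to a uniform almost-domination on compacta; every other ingredient (lower semicontinuity of $f$, Prokhorov tightness, monotone convergence, and the definition of narrow convergence) is used in a completely standard way.
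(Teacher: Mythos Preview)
The paper does not prove this lemma itself; it simply refers to \cite[Lemma~3.3]{AmbrosioGigliSavare15}. Your argument follows a standard and correct route (approximate $f$ from below by bounded continuous functions, upgrade the $\Gamma$-liminf hypothesis to uniform almost-domination on compacta via a contradiction argument, and use tightness to control tails). There is, however, one genuine slip that you should fix.

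The sentence ``Taking $t_n\equiv t$ in the hypothesis shows that $f$ is lower semicontinuous on $\setR$'' is not correct: taking $t_n\equiv t$ only yields $f(t)\le\liminf_n f_n(t)$, which says nothing about the behaviour of $f$ along sequences $s_k\to t$. And lower semicontinuity of $f$ is exactly what you need for the Moreau--Yosida envelopes $\varphi_k$ to increase to $f$ rather than merely to its lower semicontinuous envelope $\underline{f}$; without it your final supremum over admissible $\varphi\le f$ produces only $\int\underline{f}\,\di\eta$, which may be strictly smaller than $\int f\,\di\eta$. The fix is immediate: the hypothesis forces $f\le g$, where
\[
g(t):=\inf\Big\{\liminf_{n\to\infty} f_n(t_n):\ t_n\to t\Big\}
\]
is the $\Gamma$-lower limit of $(f_n)$, which is automatically lower semicontinuous and still satisfies $g(t)\le\liminf_n f_n(t_n)$ for every $t_n\to t$. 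Since $\int f\,\di\eta\le\int g\,\di\eta$, you may replace $f$ by $g$ and assume lower semicontinuity without loss of generality. After this adjustment, the key claim, the Prokhorov tightness step, and the final splitting estimate go through exactly as you wrote them.
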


We can now prove the almost rigidity in the $p$-spectral gap \autoref{thm:almostrigidity}.
\medskip

\textbf{Proof of \autoref{thm:almostrigidity}}.
\\Let us argue by contradiction. If the conclusion is false there exist $\epsilon>0$, a sequence $(X_n)_{n\in\setN}$ of $\RCD(N-1,N)$  spaces (with $\meas_n(X_n)=1$) and open domains $\Omega_n\subset X_n$ such that $\meas_n(\Omega_n)=v$, $\lambda^p_{X}(\Omega_n)\le \lambda^p_{N-1,N,v}+\frac{1}{n}$ and 
\begin{equation}\label{eq:contrad}
\dist_{mGH}\left((X_n,\dist_n,\meas_n), (X,\dist,\meas)\right)\ge \epsilon
\end{equation}
for any spherical suspension $(X,\dist,\meas)$.

By the very definition of  $\lambda^p_{X}(\Omega)$ and thanks to the approximation result of \autoref{lemma:approximationwithnonvanishing}, for any $n\in\setN\setminus\{0\}$ we can find a nonnegative function $u_n\in\LIPc(\Omega_n)$ with $\abs{\nabla u_n}(x)\neq 0$ for $\meas_n$-a.e. $x\in\set{u_n>0}$ such that $\norm{u_n}_{L^p(\meas_n)}=1$ and 
\begin{equation*}
\int_{\Omega_n}\abs{\nabla u_n}^p\di\meas_n\le \lambda^p_{X}(\Omega_n)+\frac{1}{n}\le \lambda^p_{N-1,N,v}+\frac{2}{n}.
\end{equation*}

Call $\mu_n$ (respectively $f_n$) the distribution function of $u_n$ (respectively the function associated to $u_n$ as in \eqref{eq:deffu}). Recalling \eqref{eq:deffu}, \eqref{eq:fuCoarea} and applying \eqref{eq:improvedpolyawithprofile} to the function $u_n$ we obtain 
\begin{equation}\label{eq:consequence1}
\int_0^{r}\abs{\nabla u^*_n}^p\di\meas_{N-1,N}\le\int_0^{\sup u^*_n}\left(\frac{\mathcal{I}_{(X_n,\dist_{n},\meas_{n})}(\mu_n(t))}{\mathcal{I}_{N-1,N}(\mu_n(t))}\right)^p f_{n}(t)\di t\le\lambda^p_{N-1,N,v}+\frac{2}{n},
\end{equation}
where, as usual, $r$ is given by $\mm_{N-1,N}([0,r])=v$.
As a first consequence of \eqref{eq:consequence1} we obtain that, up to extracting a subsequence, $u^*_n$ weakly converges in $W^{1,p}\left(([0,r],\dist_{eu},\meas_{N-1,N})\right)$ to a function $u^*$. Moreover the convergence is uniform on $[\epsilon,r]$ for any $\epsilon>0$ so that in particular $u^*(r)=0$. By the lower semicontinuity of the $p$-energy, we know that 
\begin{equation*}
\int_0^{r}\abs{\nabla u^*}^p\di\meas_{N-1,N}\le \liminf_{n\to\infty}\int_0^{r}\abs{\nabla u^*_n}^p\di\meas_{N-1,N}\le \lambda^p_{N-1,N,v}.
\end{equation*} 
Hence $u^*$ is the first eigenfunction of the $p$-Laplacian on the model space $([0,r],\dist_{eu},\meas_{N-1,N})$ with unit $L^p$-norm satisfying $u^*(r)=0$. In particular $u^*_n$ converges to $u^*$ in $L^p$ and in $W^{1,p}$-energy. 
\\It follows that $u^*$ has negligible level sets so that, taking into account the local uniform convergence of the functions $u^*_n$ to $u^*$, we obtain the pointwise convergence of the distribution functions $\mu_n$ to the distribution function $\mu$ of $u^*$.
\\Moreover, using \autoref{lemma:lscofmeasures} we get that the sequence of measures $\eta_n:=f_n\leb^1$ weakly converges to $\eta:=f_{u^{*}}\leb^1$ in duality with bounded and continuous functions.
 
By compactness of the class of $\RCD(N-1,N)$ metric measure spaces w.r.t. measured Gromov Hausdorff convergence, there exists an $\RCD(N-1,N)$ space  $(X,\dist,\meas)$ such that (a subsequence of) $(X_n)_{n\in\setN}$ converges to it in the measured Gromov Hausdorff sense.
\\Introduce now functions $g_n$ and $g$ by
\begin{equation*}
g_n(t):=\left(\frac{\mathcal{I}_{(X_n,\dist_{n},\meas_{n})}(\mu_n(t))}{\mathcal{I}_{N-1,N}(\mu_n(t))}\right)^p,\quad g:=\left(\frac{\mathcal{I}_{(X,\dist,\meas)}(\mu(t))}{\mathcal{I}_{N-1,N}(\mu(t))}\right)^p,
\end{equation*}
for any $t\in [0,+\infty)$. \autoref{prop:lscprofiles}, together with the pointwise convergence of the distribution functions, yields that
\begin{equation}\label{eq:lsc}
g(t)\le\liminf_{n\to\infty}g_n(t_n)
\end{equation}
for any $t\in[0,\infty)$ and for any sequence $(t_n)_{n\in\setN}$ such that $t_n\to t$ as $n\to\infty$.
\\Applying \autoref{lemma:jointlowersc} with functions $g_n,g$ and measures $\eta_n$ and $\eta$, we conclude that
\begin{equation*}
\int_0^{\sup u^*}\left(\frac{\mathcal{I}_{(X,\dist,\meas)}(\mu(t))}{\mathcal{I}_{N-1,N}(\mu(t))}\right)^pf_{u^{*}}(t)\di t\le\liminf_{n\to\infty}\int_0^{\sup u_n^*}\left(\frac{\mathcal{I}_{(X_n,\dist_{n},\meas_{n})}(\mu_n(t))}{\mathcal{I}_{N-1,N}(\mu_n(t))}\right)^pf_n(t)\di t\le\lambda^p_{N-1,N,v},
\end{equation*}
where the last inequality follows from \eqref{eq:consequence1}.
\\Summarizing, we proved that
\begin{equation*}
\lambda^p_{N-1,N,v}=\int_0^{\sup u^*}f_{u^{*}}(t)\di t\le\int_0^{\sup u^*}\left(\frac{\mathcal{I}_{(X,\dist,\meas)}(\mu(t))}{\mathcal{I}_{N-1,N}(\mu(t))}\right)^pf_{u^{*}}(t)\di t\le\lambda^p_{N-1,N,v}.
\end{equation*} 
Hence it must hold 
\begin{equation*}
\mathcal{I}_{(X,\dist,\meas)}(\mu(t))=\mathcal{I}_{N-1,N}(\mu(t))
\end{equation*}
for at least one value of $t$ such that $\mu(t)\neq 0,1$. Therefore $(X,\dist,\meas)$ is isomorphic to a spherical suspension by \autoref{thm:rigiditylevy}. But this is in contradiction with \eqref{eq:contrad} since the sequence $(X_n, \dist_{n},\meas_{n})_{n\in\setN}$ is converging to $(X,\dist,\meas)$ in the mGH sense.
\hfill$\Box$


\begin{thebibliography}{GMS13}

{\footnotesize


\bibitem[Am02]{Am2}  \textsc{L.~Ambrosio}:
\textit{Fine properties of sets of finite perimeter in doubling metric measure spaces},
\newblock { Set-Valued Anal.}, \textbf{10}, (2002), 111--128.


\bibitem[Am18]{AmbrosioICM} 
	 \leavevmode\vrule height 2pt depth -1.6pt width 23pt:
  \textit{Calculus, heat flow and curvature-dimension bounds in metric measure spaces}, Proceedings of the ICM,
Rio de Janeiro, {\bf 1},  (2018), 301--340.
 
   

\bibitem[ACDM15]{AmbrosioColomboDiMarino}
      \textsc{L. Ambrosio, M. Colombo, S. Di Marino:}
      \textit{Sobolev spaces in metric measure spaces: reflexivity and lower semicontinuity of slope.}
      Advanced Studies in Pure Mathematics, {\bf 67}, (2015), 1--58.
   
   
\bibitem[ADM14]{AmbrosioDiMarino}
\textsc{L. Ambrosio, S. Di Marino:}
\textit{Equivalent definitions of {$\BV$} space and of total variation
              on metric measure spaces}, 
              J. Funct. Anal.,  {\bf 266}, (2014), 4150--4188.
		   
\bibitem[AGMR15]{AGMR12} \textsc{L.~Ambrosio, N.~Gigli, A.~Mondino, T.~Rajala},
\newblock Riemannian {R}icci curvature lower bounds in metric measure spaces with $\sigma$-finite measure,
\newblock{\em Trans. Amer. Math. Soc.,}   \textbf{367},  7,  (2015), 4661--4701.



\bibitem[AGS14a]{AmbrosioGigliSavare13}
	\textsc{L. Ambrosio, N. Gigli, G. Savar\'e}:
	\textit{Calculus and heat flow in metric measure spaces and applications to spaces with Ricci bounds from below},
	Invent. Math., \textbf{195}, (2014), 289--391.

\bibitem[AGS14b]{AmbrosioGigliSavare14}
	 \leavevmode\vrule height 2pt depth -1.6pt width 23pt:
	\textit{Metric measure spaces with Riemannian Ricci curvature bounded from below},
	Duke Math. J., \textbf{163}, (2014), 1405--1490.
	    
 \bibitem[AGS15]{AmbrosioGigliSavare15}
        \leavevmode\vrule height 2pt depth -1.6pt width 23pt:
       \textit{Bakry-\'Emery curvature-dimension condition and Riemannian Ricci curvature bounds},
       Annals of Probability, \textbf{43}, (2015), 339--404.
       


    \bibitem[AH16]{AmbrosioHonda}
	\textsc{L. Ambrosio, S. Honda:}
	\textit{New stability results for sequences of metric measure spaces with uniform Ricci bounds from below},
	Measure Theory in Non-Smooth Spaces, edited by Nicola Gigli, De Gruyter Press, Warsaw, 2017, 1--51.

       
\bibitem[AMS15]{AmbrosioMondinoSavare}
        \textsc{L. Ambrosio, A. Mondino, G. Savar\'e:}
        \textit{Nonlinear diffusion equations and curvature conditions in metric measure spaces,}
        Accepted paper at Memoirs Amer. Math. Soc.. Arxiv preprint 1509.07273.
       
       
\bibitem[APS15]{AmbrosioPinamontiSpeight}  
\textsc{L. Ambrosio, A. Pinamonti, G. Speight}:
\textit{Tensorization of {C}heeger energies, the space {$H^{1,1}$} and
              the area formula for graphs.} 
              Adv. Math.,  {\bf 281}, (2015), 1145-1177.
		
		
\bibitem[AT04]{AmbrosioTilli04}
	     \textsc{L. Ambrosio, P. Tilli:}
	     \textit{Topics on analysis on metric spaces,}
	     Oxford University Press, Oxford, {\bf 25}, (2004), viii--133.	
		
\bibitem[BS10]{BS10}  \textsc{K.~Bacher, K.-T. Sturm:}
\newblock Localization and tensorization properties of the curvature-dimension condition for metric measure spaces,
\newblock { \em J. Funct. Anal.,} \textbf{259}, (2010),  28--56.


\bibitem[BM82]{BerardMeyer}
     \textsc{P. B\'erard, D. Meyer:}
     \textit {In\'egalit\'es isop\'erim\'etriques et applications}, 
     Ann. Sci. \'Ecole Norm. Sup., {\bf 15}, 1982, 513-541.


\bibitem[Be05]{Bertrand05}
\textsc{J. Bertrand:} 
\textit{Stabilité de l'inégalité de Faber-Krahn en courbure de Ricci positive}, 
Annales de l'Institut Fourier, {\bf 55} (2005) no. 2, 353-372.


\bibitem[CaMi16]{CMi16} \textsc{F.~Cavalletti, E.~Milman}:
\newblock The Globalization Theorem for the Curvature Dimension Condition.
\newblock{\em preprint}  arXiv:1612.07623.



\bibitem[CM17a]{CavallettiMondino17}
          \textsc{F. Cavalletti, A. Mondino:}
          \textit{Sharp and rigid isoperimetric inequalities in metric-measure spaces with lower Ricci curvature bounds,}
          Invent. Math., {\bf 208}, (2017), 803-849.
          
          \bibitem[CM17b]{CMGT}
         \leavevmode\vrule height 2pt depth -1.6pt width 23pt:
          \textit{Sharp geometric and functional inequalities in metric measure spaces with lower Ricci curvature bounds,}
          Geom. Topol.,  \textbf{21}, (2017), 603--645.
         

\bibitem[CM18]{CavallettiMondino18}
            \leavevmode\vrule height 2pt depth -1.6pt width 23pt:
           \textit{Isoperimetric inequalities for finite perimeter sets under lower Ricci curvature bounds,}
           Atti Accad. Naz. Lincei Rend. Lincei Mat. Appl., \textbf{29},  (2018), no. 3, 413--430.
          

\bibitem[Ch99]{Cheeger}
         \textsc{J. Cheeger:} \textit{Differentiability of Lipschitz functions on metric measure spaces}. 
         Geom. Funct. Anal., \textbf{9}, (1999), 428--517.

\bibitem[EKS15]{EKS} \textsc{M.~Erbar, K.~Kuwada, K.T.~Sturm:}
\newblock \textit{On the Equivalence of the Entropic Curvature-Dimension Condition and Bochner's Inequality on Metric Measure Space},
\newblock {Invent. Math.}, \textbf{201}, (2015), no. 3, 993--1071.

\bibitem[Fa23]{Fa23} \textsc{G.~Faber}:
\textit{Beweiss dass unter allen homogenen Membranen von gleicher Fl\'ache und gleicher Spannung die kreisf\"ormgige den leifsten Grundton gibt},
{Sitz. bayer Acad. Wiss.}, (1923), 169--172.

\bibitem[FV03]{FeroneVolpicelli} \textsc{A. Ferone, R. Volpicelli}:
\textit{Minimal rearrangements of Sobolev functions: a new proof,}
Ann. Inst. H. Poincaré Anal. Non Linéaire, {\bf 20}, (2003), 333--339. 
        
      
\bibitem[G15a]{Gigli1} \textsc{N. Gigli:}
         \textit{On the differential structure of metric measure spaces and applications.}
      Mem. Am. Math. Soc., {\bf 236}, (2015), no. 1113, vi+91 pp.
		
\bibitem[G18]{Gigli14} \leavevmode\vrule height 2pt depth -1.6pt width 23pt:
		\textit{Nonsmooth differential geometry - An approach tailored for spaces with Ricci curvature bounded from below},
		Mem. Amer. Math. Soc., {\bf 251}, (2018), v+161 pp.


\bibitem[GH14]{GigliHan14}
       \textsc{N. Gigli, B. Han:}
       \textit{Independence on $p$ of weak upper gradients on $\RCD$ spaces},
      J. Funct. Anal., {\bf 271}, (2016), 1--11. 
       




 \bibitem[GM13]{GigliMondino13}
                \textsc{N. Gigli, A. Mondino:}
                \textit{A PDE approach to nonlinear potential theory in metric measure spaces}.
                 J. Math. Pures Appl., {\bf 100}, (2013), 503--534.



\bibitem[GMS15]{GigliMondinoSavare13}
	\textsc{N. Gigli, A. Mondino, G. Savar\'e:}
	\textit{Convergence of pointed non-compact metric measure spaces and
	stability of Ricci curvature bounds and heat flows},
	Proc. London Math. Soc. \textbf{111}, (2015), 1071--1129. 
	
	\bibitem[GRS16]{GigliRajalaSturm16}
	                \textsc{N. Gigli, T. Rajala, K.T. Sturm:}
	                \textit{Optimal maps and exponentiation on finite-dimensional spaces with Ricci curvature bounded from below,}
	                J. Geom. Anal., {\bf 26}, (2016), no. 4, 2914--2929. 
	
	
\bibitem[Gr07]{Gro}  \textsc{M.~Gromov}:
\textit{Metric structures for Riemannian and non Riemannian spaces}, Reprint of the 2001 English edition. Modern Birkh\"auser Classics. Birkh\"auser Boston, Inc., Boston, MA, (2007). xx+585 pp.
	
\bibitem[JZ16]{JZ16}	\textsc{Y. Jiang, H.-C. Zhang},
\textit{Sharp spectral gaps on metric measure spaces},
 Calc. Var. Partial Differential Equations, \textbf{55}, (2016), no. 1, Art. 14, 14 pp. 

\bibitem[KL06]{KawohlLindqvist06}
\textsc{B. Kawohl, P. Lindqvist:}
\textit{Positive eigenfunctions for the $p$-Laplace operator revisited,}
Analysis (Munich), {\bf 26}, (2006), no. 4, 545--550.


\bibitem[K06]{Kesavan}
        \textsc{S. Kesavan:}
        \textit{Symmetrization \& applications,} 
        Series in Analysis,
        volume 3, World Scientific Publishing Co. Pte. Ltd., Hackensack, NJ (2006),
        xii+148.

\bibitem[K15]{K15}   \textsc{C. Ketterer:}
       \textit{Obata's rigidity theorem for metric measure spaces.}
Anal. Geom. Metr. Spaces, \textbf{3}, (2015), 278--295.

       
\bibitem[Kl17]{klartag} \textsc{B. Klartag:}
\textit{Needle decomposition in Riemannian geometry.} Memoirs Amer. Math. Soc., {\bf 249}, no. 1180,
(2017).

\bibitem[Kr25]{Kr25} \textsc{E. Krahn}:
\textit{\"Uber eine von Rayleigh formulierte Minimaleigenschaftdes Kreises}, Math. Annalen, \textbf{94}, (1925), 97--100.

\bibitem[Kr26]{Kr26} \leavevmode\vrule height 2pt depth -1.6pt width 23pt:
\textit{\"Uber Minimaleigenschaften der Kugel in drei und mehr Dimensionen}, Acta Comm. Univ. Tartu (Dorpat), A9 (1926) pp. 1--44 (English transl.: \"U. Lumiste and J. Peetre (eds.), Edgar Krahn, 1894--1961, A Centenary Volume, IOS Press, 1994, Chap. 6, pp. 139-174).
 
\bibitem[LMP05]{LatvalaMarolaPere}
\textsc{V. Latvala, N. Marola, M. Pere:}
\textit{Harnack’s inequality for a nonlinear eigenvalue problem
on metric spaces},
J. Math. Anal. Appl., {\bf 321}, (2006), 793--810.


\bibitem[LV07]{LV07}\textsc{J. Lott, C. Villani}:
\textit{Weak curvature conditions and functional inequalities},
J. Funct.  Analysis, \textbf{245}, (2007), 311--333.

\bibitem[LV09]{LottVillani}
 \leavevmode\vrule height 2pt depth -1.6pt width 23pt:
\textit{Ricci curvature for metric-measure spaces via optimal transport}, 
Ann. of Math., \textbf{169} (2009), 903--991.
   

\bibitem[Ma00]{Matei00}
    \textsc{A. M. Matei:}
    \textit{First eigenvalue for the $p$-Laplace operator},
    Nonlinear Anal., \textbf{39}, (2000), 1051--1068. 


\bibitem[M15]{Milman15}
		\textsc{E. Milman:}
		\textit{Sharp isoperimetric inequalities and model spaces for the curvature-dimension-diameter condition,}
		J. Eur. Math. Soc., {\bf 17}, (2015), no. 5, 1041--1078. 

\bibitem[Mi03]{MirandaJr}
    \textsc{M. Miranda Jr.}:
     \textit {Functions of bounded variation on ``good'' metric spaces}, J. Math. Pures Appl.,
    {\bf 82}, (2003), 975--1004.

\bibitem[PS51]{PS51} \textsc{G. P\'olya, G. Szeg\"o}
\textit{Isoperimetric Inequalities in Mathematical Physics}. Annals of Mathematics Studies, \textbf{27}, Princeton University Press, Princeton, N. J., (1951). xvi+279 pp. 

\bibitem[RS14]{RS2014} {\sc T. Rajala,  K.T. Sturm}:
{\em Non-branching geodesics and optimal maps in strong $\CD(K,\infty)$-spaces},
Calc. Var. Partial Differential Equations, \textbf{50},  (2014), 831--846.

\bibitem[Ray]{Ray} {\sc J.~Rayleigh}:
{\em The Theory of Sound}, Macmillan, London, (1894). 

\bibitem[St06a]{Sturm06I} {\sc K.T.~Sturm}:
\newblock On the geometry of metric measure spaces. I,
\newblock{\em Acta Math.} \textbf{196} (2006), 65--131.

\bibitem[St06b]{Sturm06II}  \leavevmode\vrule height 2pt depth -1.6pt width 23pt:
\newblock On the geometry of metric measure spaces. II,
\newblock{\em Acta Math.} \textbf{196} (2006), 133--177.


\bibitem[V09]{Vil09}
\textsc{C. Villani:} 
\textit{Optimal transport. Old and New.} 
xxii–973, Grundlehren der Mathematischen Wissenschaften [Funda-
mental Principles of Mathematical Sciences], 338. Springer-Verlag Berlin, (2009).


\bibitem[V18]{VilB} \leavevmode\vrule height 2pt depth -1.6pt width 23pt: 
{\em In\'egalit\'es Isop\'erim\'etriques dans les espaces m\'etriques mesur\'es  [d'apr\`es F. Cavalletti \& A. Mondino]} S\'eminaire BOURBAKI
69\`me ann\'ee, 2016--2017, no. 1127. Available at http://www.bourbaki.ens.fr/TEXTES/1127.pdf.

\bibitem[VR08]{VonRenesse08}
    \textsc{M. K. Von Renesse:}
    \textit{On local Poincar\'e via transportation,}
    Math. Zeit., {\bf 259}, (2008), 21--31.
    
    

    \bibitem[W99]{Weaver}  \textsc{N. Weaver:}, 
    \textit{Lipschitz algebras}, World Scientific Publishing Co., Inc., River Edge, NJ, (1999).

}

\end{thebibliography}
\end{document}